\numberwithin{equation}{section}
\newtheorem{thm}{Theorem}[section]
\newtheorem{cor}[thm]{Corollary}
\newtheorem{lem}[thm]{Lemma}
\newtheorem{rmk}[thm]{Remark}
\newcommand{\pt}{\partial}
\begin{document}

\title[$\alpha$-Dirac-harmonic map flow]{Short-time existence of the $\alpha$-Dirac-harmonic map flow and applications}
\author{J\"urgen Jost, Jingyong Zhu}
\address{Max Planck Institute for Mathematics in the Sciences, Inselstrasse 22, 04103 Leipzig, Germany}
\email{jost@mis.mpg.de}
\address{Max Planck Institute for Mathematics in the Sciences, Inselstrasse 22, 04103 Leipzig, Germany}
\email{jizhu@mis.mpg.de}

\subjclass[2010]{53C43; 58E20}
\keywords{Dirac-harmonic map; $\alpha$-Dirac-harmonic map; $\alpha$-Dirac-harmonic map flow; minimal kernel; existence.}


\begin{abstract}
In this paper, we discuss the general existence theory of Dirac-harmonic maps from closed surfaces via the heat flow for $\alpha$-Dirac-harmonic maps and blow-up analysis. More precisely, given any initial map along which the Dirac operator has nontrivial minimal kernel, we first prove the short time existence of the heat flow for $\alpha$-Dirac-harmonic maps. The obstacle to the global existence is the singular time when the kernel of the Dirac operator no longer stays minimal along the flow. In this case, the kernel may not be continuous even if the map is smooth with respect to time. To overcome this issue, we use the analyticity of the target manifold to obtain the density of the maps along which the Dirac operator has minimal kernel in the homotopy class of the given initial map. Then, when  we arrive at the singular time, this density allows us to pick another map which has lower energy to restart the flow. Thus, we get a flow which may not be continuous at a set of isolated points. Furthermore, with the help of small energy regularity and blow-up analysis, we finally get the existence of nontrivial $\alpha$-Dirac-harmonic maps ($\alpha\geq1$) from closed surfaces. Moreover, if the target manifold does not admit any nontrivial harmonic sphere, then the map part stays in the same homotopy class as the given initial map. 
\end{abstract}
\maketitle


\section{Introduction}  

Motivated by the supersymmetric nonlinear sigma model from quantum field theory, see \cite{jost2009geometry}, Dirac-harmonic maps from spin Riemann surfaces into Riemannian manifolds were introduced in \cite{chen2006dirac}. They are generalizations of the classical harmonic maps and harmonic spinors. From the variational point of view, they are critical points of a conformal invariant  action functional whose {Euler-Lagrange equations are }a coupled elliptic system consisting of a second order equation and a Dirac equation.

It turns out that the existence of  Dirac-harmonic maps from closed surfaces is a very difficult problem. Different from the Dirichlet problem, even if there is no bubble, the nontriviality of the limit is also an issue. Here, a solution is considered  trivial if the spinor part $\psi$ vanishes identically. So far, there are only a few results about Dirac-harmonic maps from closed surfaces, see \cite{ammann2013dirac} and \cite{yang2009structure}\cite{chen2015dirac} for uncoupled Dirac-harmonic maps (here uncoupled means that the map part is harmonic; by an observation of Bernd Ammann and Johannes Wittmann, this is the typical case) based on index theory and the Riemann-Roch theorem, respectively. In an important contribution \cite{wittmann2017short}, Wittmann  investigated the heat flow  introduced in \cite{chen2017estimates} and showed the short-time existence of this flow; for reasons that will become apparent below this is not as easy as for other parabolic systems. The problem has also been approached by linking and Morse-Floer theory. See \cite{isobe2012existence}\cite{isobe2017morse} for one dimension and \cite{jost2019alpha} for the two dimensional case.  

In critical point theory, the Palais-Smale condition is a very strong and useful tool. It fails, however, for many of the basic problems in geometric analysis, and in particular for the energy functional of harmonic maps from spheres \cite{jost2017riemannian}. Therefore, it is not expected to be true for Dirac-harmonic maps. To overcome this problem for harmonic maps, Sacks-Uhlenbeck \cite{sacks1981existence} introduced the notion of $\alpha$-harmonic maps where the integrand in the energy functional is raised to a power $\alpha >1$. These $\alpha$-harmonic maps then satisfy the Palais-Smale condition. However, when we analogously introduce $\alpha$-Dirac-harmonic maps, the Palais-Smale condition fails due to the following existence result for uncoupled $\alpha$-Dirac-harmonic maps, which directly follows from the proof of Theorem \ref{Existence of alpha DH map}.
\begin{thm}
For a closed spin surface $M$ and a closed manifold $N$, consider a homotopy class $[\phi]$ of maps $\phi: M^m\to N^n$
for which   $[{\rm dim}_{\mathbb{H}}({\rm ker}\slashed{D}_\phi)]_{\mathbb{Z}_2}$ is non-trivial. Assume that $\phi_0\in[\phi]$ is an $\alpha$-harmonic map. Then there is a real vector space $V$ of real dimension $4$ such that all
$(\phi_0,\psi)$, $\psi\in V$, are $\alpha$-Dirac-harmonic maps.
\end{thm}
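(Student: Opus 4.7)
The plan is to decouple the $\alpha$-Dirac-harmonic system at the given $\alpha$-harmonic map $\phi_0$ and then exploit the quaternionic structure of the twisted Dirac operator on a spin surface. The Euler--Lagrange equations of the $\alpha$-Dirac-harmonic action consist of a $(1+|d\phi|^2)^{\alpha-1}$-weighted map equation corrected by a quadratic spinor--curvature term of schematic form $\mathcal{C}(\phi,\psi)\sim R^N(e_\beta\cdot\psi,\psi)\,d\phi(e_\beta)$, together with the spinor equation $\slashed{D}_\phi\psi=0$. Since $\phi_0$ is $\alpha$-harmonic by hypothesis, the leading part of the map equation vanishes at $(\phi_0,\psi)$ for any $\psi$, so the map equation is equivalent to the pointwise vanishing of $\mathcal{C}(\phi_0,\psi)$. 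This reduces the theorem to finding a real $4$-dimensional subspace of $\ker\slashed{D}_{\phi_0}$ on which $\mathcal{C}(\phi_0,\cdot)$ vanishes identically.

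Next I would invoke the index hypothesis. By the homotopy invariance of $[\dim_{\mathbb H}\ker\slashed{D}_\phi]_{\mathbb Z_2}$, the quaternionic dimension $\dim_{\mathbb H}\ker\slashed{D}_{\phi_0}$ is odd, hence at least one. On the $2$-dimensional spin manifold $M$ the bundle $\Sigma M\otimes\phi_0^*TN$ carries a parallel quaternionic structure $\{I,J,K\}$ commuting with Clifford multiplication by real vectors and therefore with $\slashed{D}_{\phi_0}$, so $\ker\slashed{D}_{\phi_0}$ is a genuine quaternionic vector space. Picking any nonzero $\psi_1$ in the kernel, the real span $V=\mathrm{span}_{\mathbb R}\{\psi_1,I\psi_1,J\psi_1,K\psi_1\}\subset\ker\slashed{D}_{\phi_0}$ is a $4$-dimensional real subspace on which the spinor equation is automatic.

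It remains to show $\mathcal{C}(\phi_0,\psi)=0$ pointwise for every $\psi\in V$, and this is where I expect the main technical work. The argument should proceed via the algebraic identity underlying the classical \emph{uncoupled} Dirac-harmonic construction of Ammann--Ginoux: the antisymmetries of $R^N$ combined with the compatibility of Clifford multiplication with the quaternionic structure force $\mathcal{C}(\phi_0,\cdot)$ to be a real quadratic form on $\ker\slashed{D}_{\phi_0}$ which, when evaluated on a harmonic spinor, equals its own negative and hence vanishes identically. The $\alpha$-correction of the map equation only rescales the leading derivative term by a positive function of $|d\phi_0|^2$ and introduces no new spinor contribution, so the $\alpha=1$ cancellation carries over unchanged. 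Verifying this Clifford-algebra identity carefully is the main obstacle; once it is in hand, the theorem follows by taking $V$ to be the quaternionic line constructed above.
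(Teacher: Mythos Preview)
Your reduction is sound: once $\phi_0$ is $\alpha$-harmonic, the coupled system at $(\phi_0,\psi)$ reduces to $\slashed{D}^{\phi_0}\psi=0$ together with the pointwise vanishing of the curvature term $\mathcal{R}(\phi_0,\psi)$, and the index hypothesis does guarantee a quaternionic line in $\ker\slashed{D}^{\phi_0}$. The genuine gap is in your justification of $\mathcal{R}(\phi_0,\psi)=0$. You propose that this follows from the quaternionic structure together with the harmonic spinor condition, via an identity of the type ``$\mathcal{R}$ equals its own negative''. But $\mathcal{R}(\phi_0,\psi)=\tfrac12 R^m_{lij}\langle\psi^i,\nabla\phi_0^l\cdot\psi^j\rangle\,\partial_{y^m}$ is a purely pointwise algebraic expression in $\psi$, $d\phi_0$ and $R^N$; it cannot detect the global equation $\slashed{D}^{\phi_0}\psi=0$, so harmonicity of $\psi$ cannot be the reason it vanishes. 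Nor is there a Clifford/quaternionic identity forcing $\mathcal{R}(\phi_0,\psi)=-\mathcal{R}(\phi_0,\psi)$ on the kernel.

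The mechanism the paper actually uses (following Ammann--Ginoux) is one you do not mention: the $\mathbb{Z}_2$-grading $\Sigma M=\Sigma^+M\oplus\Sigma^-M$. Because Clifford multiplication by a real tangent vector exchanges $\Sigma^\pm M$ and the summands are orthogonal, for any half-spinor $\psi^+\in\Gamma(\Sigma^+M\otimes\phi_0^*TN)$ one has $\langle\psi^{+,i},\nabla\phi_0^l\cdot\psi^{+,j}\rangle\equiv 0$, hence $\mathcal{R}(\phi_0,\psi^+)=0$ pointwise, with no harmonicity assumption. The paper phrases this variationally: along any variation $u_s$ of $\phi_0$ with $\psi_s^\pm$ the transported half-spinor, one has $(\slashed{D}^{u_s}\psi_s^\pm,\psi_s^\pm)_{L^2}\equiv 0$, whence $L^\alpha(u_s,\psi_s^\pm)=E^\alpha(u_s)$, and criticality of $\phi_0$ for $E^\alpha$ yields criticality of $(\phi_0,\psi^\pm)$ for $L^\alpha$. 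The quaternionic structure $J$ enters only afterwards, to pass from the real $2$-dimensional space $\ker^+$ to the $4$-dimensional $V$; by itself it does not kill $\mathcal{R}$. So the missing ingredient in your plan is precisely the chirality decomposition.
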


 To overcome this issue, in \cite{isobe2012existence}\cite{isobe2017morse}, the authors add an extra nonlinear term to the action functional of Dirac-geodesics. As for the two dimensional case \cite{jost2019alpha}, we even cannot directly prove the Palais-Smale condition for the action functional of perturbed Dirac-harmonic maps {into non-flat target manifolds}. Instead, we are only able to prove it for perturbed $\alpha$-Dirac-harmonic maps, and then approximate the $\alpha$-Dirac-harmonic map by a sequence of perturbed $\alpha$-Dirac-harmonic maps. However, in this approach, it is not easy to control the energies of the perturbed $\alpha$-Dirac-harmonic maps, which are constructed by a Min-Max method over  increasingly large domains in the configuration space.

Due to these two problems, {in this paper}, we would like to use the heat flow method to get the existence of Dirac-harmonic maps from closed surfaces to general manifolds where the harmonic map type equation is parabolized and the first order Dirac equation is carried along as an elliptic side constraint \cite{chen2017estimates}. As already mentioned, the short-time existence of the  heat flow for Dirac-harmonic map was proved by Wittmann \cite{wittmann2017short}. He constructed the solution to the constraint Dirac equation by the projector of the Dirac operator along maps. By assuming that the Dirac operator along the initial map has nontrivial minimal kernel, he showed that the kernel would stay minimal for small time in the homotopy class of the initial map. This minimality implies a uniform bound for the resolvents and the Lipschitz continuity of the normalized Dirac kernel along the flow. This Lipschitz continuity makes the Banach fixed point theorem available. If one follows this approach, the first issue is how to deal with the kernel jumping problem. Observe that if the Dirac operators converge at the jumping time, the symmetry of the spectrum of Dirac operator guarantees that the limiting Dirac operator has odd dimensional kernel. Therefore, it is natural to try to extend Wittmann's short time existence to the odd dimensional case. However, the eigenvalues in this case may split at time $t=0$. Then the projector may not be continuous even if the Dirac operator is smooth with respect to time along the flow (see \cite{kato2013perturbation}), which means that the Lipschitz continuity of the kernel is not available in general. To overcome this issue, we need the density mentioned in the abstract, which gives us a piecewise smooth flow.

As for the convergence, it is sufficient to control the energy of the spinor field because the energy of the map decreases along the flow. To do so, one can impose a restriction on the energy of the initial map as in \cite{jost2017global} and get the existence of Dirac-harmonic maps when the initial map has small energy. Alternatively, we use another type flow, that is, the heat flow  for $\alpha$-Dirac-harmonic maps (also called $\alpha$-Dirac-harmonic map flow in the literatures). Our motivation comes from the successful application of this flow to the Dirichlet problem \cite{jost2018geometric}. Different from there, we cannot uniquely solve the constraint equation. Moreover, our equations of the flow are different. We never write the constraint equation in the Euclidean space $\mathbb{R}^q$. Instead, we just solve it in the target manifold $N$. Last, our flow is not unique due to the absent of a boundary. Instead, only a weak uniqueness is available. Consequently, we need prove the fact that the flow takes value in the target manifold $N$ in a different way. Eventually, we shall obtain the  following results on the general existence of Dirac-harmonic maps.

\begin{thm}\label{existence of DH maps}
Let $M$ be a closed spin surface and $(N,h)$ a real analytic closed manifold. Suppose there exists a map $u_0\in C^{2+\mu}(M,N)$ for some $\mu\in(0,1)$ such that ${\rm dim}_{\mathbb{H}}{\rm ker}\slashed{D}^{u_0}=1$. Then there exists a nontrivial smooth Dirac-harmonic map $(\Phi,\Psi)$ {satisfying $E(\Phi)\leq E(u_0)$ and $\|\Psi\|_{L^2}=1$. }

 Furthermore, if $(N,h)$ does not admit any nontrivial harmonic sphere, then the map part $\Phi$ is in the same homotopy class as $u_0$ and $(\Phi,\Psi)$ is coupled if the energy of the map is strictly bigger than the energy minimizer in the homotopy class $[u_0]$. 
\end{thm}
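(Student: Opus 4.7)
The plan is to produce $(\Phi,\Psi)$ as the limit, as $\alpha\downarrow 1$, of $\alpha$-Dirac-harmonic maps $(\Phi_\alpha,\Psi_\alpha)$, each of which is in turn obtained as a time-asymptotic limit of the $\alpha$-Dirac-harmonic map flow issued from $u_0$. Fix $\alpha>1$ close to $1$. Since $\slashed{D}^{u_0}$ has minimal kernel, the short-time existence theorem established earlier in the paper produces a smooth flow $(u(t),\psi(t))$ on some interval $[0,T_1)$ with $\|\psi(t)\|_{L^2}\equiv 1$. At a first singular time $T_1$ at which the kernel ceases to be minimal, I invoke the density of minimal-kernel maps in the homotopy class $[u_0]$ (guaranteed by the real-analyticity of $N$, as discussed in the abstract and proved earlier in the paper) to select a replacement map $\tilde u$ arbitrarily close to $u(T_1^-)$ with minimal kernel and $E(\tilde u)\le E(u(T_1^-))$, then restart the flow. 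Iterating yields a piecewise smooth flow defined on all of $[0,\infty)$, with a discrete jump set, $\|\psi(t)\|_{L^2}\equiv 1$, and $E(u(t))$ non-increasing and bounded above by $E(u_0)$.

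Combining the parabolic estimates for the map equation with the elliptic estimates for the Dirac constraint (the latter uniform on the pieces thanks to minimality of the kernel and hence resolvent bounds), a standard Lyapunov argument extracts $t_n\to\infty$ along which $(u(t_n),\psi(t_n))$ converges to an $\alpha$-Dirac-harmonic map $(\Phi_\alpha,\Psi_\alpha)$ satisfying $\|\Psi_\alpha\|_{L^2}=1$ and $E(\Phi_\alpha)\le E(u_0)$. I then take $\alpha_k\downarrow 1$. The uniform bound on $E(\Phi_{\alpha_k})$ together with the $L^2$-normalization of $\Psi_{\alpha_k}$ enables the Sacks--Uhlenbeck-type blow-up analysis for this system, using the small energy regularity for $\alpha$-Dirac-harmonic maps: $(\Phi_{\alpha_k},\Psi_{\alpha_k})$ converges smoothly on $M$ minus finitely many concentration points $\{x_1,\dots,x_\ell\}$ to a smooth Dirac-harmonic map $(\Phi,\Psi)$, and at each $x_j$, suitable rescalings produce nontrivial harmonic or Dirac-harmonic $S^2\to N$ bubbles. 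Because the spinor is an $L^2$-normalized solution of a first-order linear elliptic equation along $\Phi_{\alpha_k}$, its $L^2$-mass cannot concentrate at a finite set, so $\|\Psi\|_{L^2}=1$; this yields nontriviality and the bound $E(\Phi)\le E(u_0)$.

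For the second half, assume $N$ admits no nontrivial harmonic $2$-sphere. Then no map bubble can form, so $\Phi_{\alpha_k}\to\Phi$ strongly in $W^{1,2}$, and a standard degree/approximation argument places $\Phi$ in the same homotopy class as $u_0$. For the coupling assertion, suppose for contradiction that $(\Phi,\Psi)$ is uncoupled, so $\Phi$ is a harmonic map in $[u_0]$. Under the no-harmonic-sphere hypothesis, the classical Sacks--Uhlenbeck/Struwe theory for the harmonic map heat flow from $\Phi$ produces a continuous energy-decreasing deformation in $[u_0]$ ending at an energy minimizer; matched with the uncoupled $\alpha$-Dirac-harmonic construction from Theorem~\ref{Existence of alpha DH map}, this would force $E(\Phi)=E_{\min}[u_0]$, contradicting the assumption $E(\Phi)>E_{\min}[u_0]$.

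The hardest step is the passage $\alpha\downarrow 1$: everything else either reduces to results established earlier in the paper (short-time existence, minimal-kernel density) or to by-now routine parabolic/elliptic estimates. The crux is to rule out $\Psi\equiv 0$ in the limit, i.e., to prevent the $L^2$-mass of the spinor from escaping into the bubbling points. The minimal-kernel property along the flow, which gives uniform control of the Dirac projector and hence $L^p$-estimates for $\Psi_{\alpha_k}$ past $L^2$, is what ultimately secures nontriviality.
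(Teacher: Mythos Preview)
Your overall architecture matches the paper: produce $\alpha$-Dirac-harmonic maps $(\Phi_\alpha,\Psi_\alpha)$ via the (restarted) flow as in Theorem~\ref{Existence of alpha DH map}, then let $\alpha\downarrow 1$ and use blow-up analysis. The gap is in the last paragraph, where you identify the wrong mechanism for the crucial nontriviality step $\|\Psi\|_{L^2}=1$.

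You write that ``the minimal-kernel property along the flow, which gives uniform control of the Dirac projector and hence $L^p$-estimates for $\Psi_{\alpha_k}$ past $L^2$, is what ultimately secures nontriviality.'' This is not correct, for two reasons. First, the minimal-kernel property is used only to run and restart the flow; it is \emph{not} known to hold at the time-asymptotic limits $(\Phi_\alpha,\Psi_\alpha)$ (indeed, the proof of Theorem~\ref{Existence of alpha DH map} explicitly allows $\slashed{D}^{\Phi_\alpha}$ to have non-minimal kernel, handling that case via the $\mathbb{Z}_2$-grading). Second, even where the kernel is minimal, the resolvent bounds depend on the spectral gap and give no $\alpha$-uniform control as $\alpha\to 1$. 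The actual mechanism is Lemma~\ref{key estimate}: for any $1<p<2$,
\[
\|\psi\|_{W^{1,p}(M)}\le C\bigl(p,M,N,\|\nabla u\|_{L^{2\alpha}}\bigr)\bigl(\|\slashed{D}^u\psi\|_{L^p}+\|\psi\|_{L^p}\bigr),
\]
so from $\slashed{D}^{\Phi_{\alpha_k}}\Psi_{\alpha_k}=0$, $\|\Psi_{\alpha_k}\|_{L^2}=1$ and $E^{\alpha_k}(\Phi_{\alpha_k})\le E^{\alpha_k}(u_0)$ one gets a bound $\|\Psi_{\alpha_k}\|_{W^{1,p}}\le C(E(u_0))$ uniform in $k$. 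Sobolev then gives a uniform $L^{p^*}$ bound for any $p^*<\infty$; taking $p^*>4$ and H\"older on the shrinking discs $D_{\lambda_{\alpha_k}R}(x_{\alpha_k})$ shows the rescaled spinors $\sqrt{\lambda_{\alpha_k}}\Psi_{\alpha_k}$ vanish in $L^4_{\mathrm{loc}}$, so every bubble is a pure harmonic sphere, and the same $W^{1,p}\hookrightarrow L^2$ compactness forces $\|\Psi\|_{L^2}=1$. Your sentence ``its $L^2$-mass cannot concentrate at a finite set'' is the right conclusion but needs this estimate, not the projector bounds.

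Two smaller points. In the restart step you ask for a minimal-kernel $\tilde u$ that is both arbitrarily close to $u(T_1^-)$ \emph{and} has strictly smaller energy; density alone does not give both simultaneously. The paper instead picks any energy level strictly between $m_0^\alpha$ and $E^\alpha(u(T_1^-))$ and approximates a map at that level. For the coupling assertion, your contradiction argument fails because a harmonic map is a stationary point of the harmonic map heat flow, so running the flow from $\Phi$ does not decrease energy; the paper itself does not supply a detailed argument for this clause in the proof of Theorem~\ref{existence of DH maps}.
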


\begin{rmk}
The analyticity of the target manifold is a sufficient condition which is used to get the density mentioned in the abstract. In fact, it is easy to see from the proof that we only need the density of the following set
\begin{equation}
\begin{split}
Y:=\{e\in(m^{\alpha_i}_0,+\infty)| &\text{there exists at least one map} \ u \ \text{such that}\\
&\quad {\rm dim}_{\mathbb{H}}{\rm ker}\slashed{D}^{u}=1 \ \text{and} \ E^{\alpha_i}(u)=e \}
\end{split}
\end{equation}
 at the $\alpha_i$-energy minimizer $m^{\alpha_i}_0$ in the homotopy class $[u_0]$ for a sequence $\alpha_i\searrow1$ as $i\to\infty$.

\end{rmk}

In \cite{wittmann2019minimal}, Wittmann discussed the density of those maps along which all the Dirac operators have minimal kernel. In particular, we have the following corollary.
\begin{cor}
Let $M$ be a closed spin surface and $(N,h)$ a real analytic closed manifold. We also assume that 

(1) M is connected, oriented and of positive genus;

(2) N is connected. If $N$ is even-dimensional, then we assume that it is non-orientable.

Then there exists a nontrivial smooth Dirac-harmonic map.
\end{cor}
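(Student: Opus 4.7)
The plan is to invoke Theorem~\ref{existence of DH maps}, whose hypothesis reduces the entire problem to exhibiting a single map $u_0 \in C^{2+\mu}(M,N)$ with $\dim_{\mathbb{H}} \ker \slashed{D}^{u_0} = 1$. Once such a $u_0$ is produced, Theorem~\ref{existence of DH maps} directly outputs a nontrivial smooth Dirac-harmonic map and the corollary follows.

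The existence of such a $u_0$ is provided by Wittmann's density theorem in \cite{wittmann2019minimal}, which is formulated for precisely this setup. Under the topological hypotheses (1)--(2), an index-theoretic computation --- using the positive genus of $M$ to choose a spin structure with nontrivial $\alpha$-invariant, together with the orientability and parity constraints on $N$ --- shows that the quaternionic kernel dimension $\dim_{\mathbb{H}} \ker \slashed{D}^u$ is bounded below by $1$ for every smooth map $u: M\to N$, and generically equals $1$. Wittmann establishes that the set of smooth maps for which this minimal value is realized is dense in $C^\infty(M,N)$ in the smooth topology. Choosing any element of this dense set provides a smooth, hence $C^{2+\mu}(M,N)$ for every $\mu\in(0,1)$, map $u_0$ satisfying $\dim_{\mathbb{H}} \ker \slashed{D}^{u_0} = 1$.

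Applying Theorem~\ref{existence of DH maps} to this $u_0$ then yields the desired nontrivial smooth Dirac-harmonic map. The only point requiring verification is the compatibility of the hypotheses: conditions (1) and (2) of the corollary are precisely those under which Wittmann's minimal-kernel density result applies, and the real analyticity of $(N,h)$ is preserved as input to Theorem~\ref{existence of DH maps}. The would-be main obstacle --- the availability of a topological density statement sharp enough to land on a $1$-dimensional (over $\mathbb{H}$) kernel --- has already been overcome in \cite{wittmann2019minimal}, so no substantively new analytic or geometric work is required beyond the flow construction of Theorem~\ref{existence of DH maps} and this density input.
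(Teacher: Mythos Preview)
Your proposal is correct and matches the paper's approach exactly: the corollary is presented in the paper as an immediate consequence of Theorem~\ref{existence of DH maps} combined with Wittmann's minimal-kernel density result from \cite{wittmann2019minimal}, which under hypotheses (1)--(2) supplies the required initial map $u_0$ with $\dim_{\mathbb{H}}\ker\slashed{D}^{u_0}=1$. No additional argument beyond this citation is given (or needed) in the paper.
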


The rest of paper is organized as follows: In Section 2, we recall some definitions, notations and lemmas  about Dirac-harmonic maps and the kernel of Dirac operator. In Section 3, under the minimality assumption on the kernel of the Dirac operator along the initial map, we prove the short time existence, weak uniqueness and regularity of the heat flow for $\alpha$-Dirac-harmonic maps. In Section 4, we prove the existence of $\alpha$-Dirac-harmonic maps and Theorem \ref{existence of DH maps}. In the Appendix,  we solve the constraint equation and prove Lipschitz continuity of the solution with respect to the map.

\section{Preliminaries}    

Let $(M, g)$ be a compact surface with a fixed spin structure. On the spinor bundle $\Sigma M$, we denote the Hermitian inner product by $\langle\cdot, \cdot\rangle_{\Sigma M}$. For any $X\in\Gamma(TM)$ and $\xi\in\Gamma(\Sigma M)$, the Clifford multiplication satisfies the following skew-adjointness:
\begin{equation}
\langle X\cdot\xi, \eta\rangle_{\Sigma M}=-\langle\xi, X\cdot\eta\rangle_{\Sigma M}.
\end{equation}
Let $\nabla$ be the Levi-Civita connection on $(M,g)$. There is a connection (also denoted by $\nabla$) on $\Sigma M$ compatible with $\langle\cdot, \cdot\rangle_{\Sigma M}$.  Choosing a local orthonormal basis $\{e_{\beta}\}_{\beta=1,2}$ on $M$, the usual Dirac operator is defined as $\slashed\partial:=e_\beta\cdot\nabla_\beta$, where $\beta=1,2$. Here and in the sequel, we use the Einstein summation convention. One can find more about spin geometry in \cite{lawson1989spin}.

Let $\phi$ be a smooth map from $M$ to another compact Riemannian manifold $(N, h)$ of dimension $n\geq2$. Let $\phi^*TN$ be the pull-back bundle of $TN$ by $\phi$ and consider the twisted bundle $\Sigma M\otimes\phi^*TN$. On this bundle there is a metric $\langle\cdot,\cdot\rangle_{\Sigma M\otimes\phi^*TN}$ induced from the metric on $\Sigma M$ and $\phi^*TN$. Also, we have a connection $\tilde\nabla$ on this twisted bundle naturally induced from those on $\Sigma M$ and $\phi^*TN$. In local coordinates $\{y^i\}_{i=1,\dots,n}$, the section $\psi$ of $\Sigma M\otimes\phi^*TN$ is written as 
$$\psi=\psi_i\otimes\partial_{y^i}(\phi),$$
where each $\psi^i$ is a usual spinor on $M$. We also have the following local expression of $\tilde\nabla$
$$\tilde\nabla\psi=\nabla\psi^i\otimes\partial_{y^i}(\phi)+\Gamma_{jk}^i(\phi)\nabla\phi^j\psi^k\otimes\partial_{y^i}(\phi),$$
where $\Gamma^i_{jk}$ are the Christoffel symbols of the Levi-Civita connection of $N$. The Dirac operator along the map $\phi$ is defined as
\begin{equation}\label{dirac}
\slashed{D}:=e_\alpha\cdot\tilde\nabla_{e_\alpha}\psi=\slashed\partial\psi^i\otimes\partial_{y^i}(\phi)+\Gamma_{jk}^i(\phi)\nabla_{e_\alpha}\phi^j(e_\alpha\cdot\psi^k)\otimes\partial_{y^i}(\phi),
\end{equation}
which is self-adjoint \cite{jost2017riemannian}. Sometimes, we use $\slashed{D}_\phi$ to distinguish the Dirac operators defined on different maps. In \cite{chen2006dirac}, the authors  introduced the  functional
\begin{equation}\begin{split}
L(\phi,\psi)&:=\frac12\int_M(|d\phi|^2+\langle\psi,\slashed{D}\psi\rangle_{\Sigma M\otimes\phi^*TN})\\
&=\frac12\int_M h_{ij}(\phi)g^{\alpha\beta}\frac{\partial\phi^i}{\partial x^\alpha}\frac{\pt\phi^j}{\pt x^\beta}+h_{ij}(\phi)\langle\psi^i,\slashed{D}\psi^j\rangle_{\Sigma M}.
\end{split}
\end{equation}
They computed the Euler-Lagrange equations of $L$:
\begin{equation}\label{eldh1}
\tau^m(\phi)-\frac12R^m_{lij}\langle\psi^i,\nabla\phi^l\cdot\psi^j\rangle_{\Sigma M}=0,
\end{equation}
\begin{equation}\label{eldh2}
\slashed{D}\psi^i=\slashed\pt\psi^i+\Gamma_{jk}^i(\phi)\nabla_{e_\alpha}\phi^j(e_\alpha\cdot\psi^k)=0,
\end{equation}
where $\tau^m(\phi)$ is the $m$-th component of the tension field \cite{jost2017riemannian} of the map $\phi$ with respect to the coordinates on $N$, $\nabla\phi^l\cdot\psi^j$ denotes the Clifford multiplication of the vector field $\nabla\phi^l$ with the spinor $\psi^j$, and $R^m_{lij}$ stands for the component of the Riemann curvature tensor of the target manifold $N$. Denote 
$$\mathcal{R}(\phi,\psi):=\frac12R^m_{lij}\langle\psi^i,\nabla\phi^l\cdot\psi^j\rangle_{\Sigma M}\pt_{y^m}.$$ We can write \eqref{eldh1} and \eqref{eldh2} in the following global form:
\begin{numcases}{}
\tau(\phi)=\mathcal{R}(\phi,\psi), \label{geldh1} \\
\slashed{D}\psi=0,  \label{geldh2}
\end{numcases}
and call the solutions $(\phi,\psi)$ Dirac-harmonic maps from $M$ to $N$.

With the aim to get a general existence scheme for  Dirac-harmonic maps, the following heat flow for Dirac-harmonic maps was introduced in \cite{chen2017estimates}:
\begin{numcases}{}
		\partial_tu=\tau(u)-\mathcal{R}(u,\psi), & on $(0,T)\times M$, \label{map}\\
		\slashed{D}^{u}\psi=0, & on $[0,T]\times M$.	\label{dirac}
	\end{numcases}
When $M$ has boundary, the short time existence and uniqueness of \eqref{map}-\eqref{dirac} was also shown in \cite{chen2017estimates}. Furthermore, the existence of a global weak solution to this flow in dimension two under some boundary-initial constraint was obtained in \cite{jost2017global}. In \cite{jost2018geometric}, to remove the restriction on the initial maps, the authors refined an estimate about the spinor in \cite{chen2017estimates} as follows:
 \begin{lem}\label{spinor norm with boundary}\cite{jost2018geometric}
 Let $M$ be a compact spin Riemann surface with boundary $\partial M$, N be a compact Riemann manifold. Let $u\in W^{1,2\alpha}(M,N)$ for some $\alpha>1$ and $\psi\in W^{1,p}(M, \Sigma M\otimes u^*TN)$ for $1<p<2$, then there exists a positive constant $C=C(p,M,N,\|\nabla u\|_{L^{2\alpha}})$ such that
 \begin{equation}
 \|\psi\|_{W^{1,p}(M)}\leq C(\|\slashed{D}\psi\|_{L^{p}(M)}+\|\mathcal{B}\psi\|_{W^{1-1/p,p}(\partial M)}).
 \end{equation}
 \end{lem}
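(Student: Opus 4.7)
The plan is to treat $\slashed{D}^u$ as a zero-order perturbation of the ordinary Dirac operator $\slashed{\pt}$ on $\Sigma M$. The local form recorded in the excerpt,
\[
\slashed{D}^u\psi=\slashed{\pt}\psi^i\otimes\pt_{y^i}(u)+\Gamma^i_{jk}(u)\nabla_{e_\alpha}u^j(e_\alpha\cdot\psi^k)\otimes\pt_{y^i}(u),
\]
shows that $\slashed{D}^u-\slashed{\pt}$ is pointwise multiplication by something of the schematic shape $A(u)\nabla u$. The classical $W^{1,p}$ elliptic boundary estimate for the Dirac boundary value problem $(\slashed{\pt},\mathcal{B})$, valid for a well-posed elliptic (e.g.\ chirality / MIT-bag) boundary operator $\mathcal{B}$, yields
\[
\|\psi\|_{W^{1,p}(M)}\leq C\bigl(\|\slashed{\pt}\psi\|_{L^p(M)}+\|\mathcal{B}\psi\|_{W^{1-1/p,p}(\pt M)}+\|\psi\|_{L^p(M)}\bigr),
\]
so the work reduces to bounding $\|A(u)\nabla u\cdot\psi\|_{L^p}$ and eventually removing the residual $\|\psi\|_{L^p}$ term.

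For the coupling, Hölder's inequality with $\tfrac{1}{p}=\tfrac{1}{2\alpha}+\tfrac{1}{q}$ gives
\[
\|A(u)\nabla u\cdot\psi\|_{L^p(M)}\leq C\|\nabla u\|_{L^{2\alpha}(M)}\|\psi\|_{L^q(M)},\qquad q=\frac{2\alpha p}{2\alpha-p}.
\]
The hypothesis $\alpha>1$ is crucial here: it makes $q$ strictly smaller than the critical two-dimensional Sobolev exponent $2p/(2-p)$ of $W^{1,p}$, so Gagliardo--Nirenberg interpolation provides, for every $\eps>0$,
\[
\|\psi\|_{L^q(M)}\leq\eps\|\psi\|_{W^{1,p}(M)}+C(\eps)\|\psi\|_{L^p(M)}.
\]
Choosing $\eps$ small compared to $C\|\nabla u\|_{L^{2\alpha}}$ lets me absorb the $W^{1,p}$ norm back into the left-hand side, producing the desired inequality with an extra $\|\psi\|_{L^p(M)}$ summand on the right.

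The main obstacle is then the removal of this residual $L^p$ term, since the claimed estimate carries none. I would close the argument by a Peetre-type contradiction-compactness scheme: were the clean inequality to fail for the given $u$, there would exist a sequence $\psi_k$ with $\|\psi_k\|_{W^{1,p}(M)}=1$ while $\|\slashed{D}^u\psi_k\|_{L^p}+\|\mathcal{B}\psi_k\|_{W^{1-1/p,p}(\pt M)}\to 0$. The compact embedding $W^{1,p}\hookrightarrow\hookrightarrow L^p$ in dimension two, combined with the inequality with the $L^p$ term already proved, upgrades weak $W^{1,p}$-convergence of a subsequence to strong $W^{1,p}$-convergence to a nonzero limit $\psi_\infty$ satisfying $\slashed{D}^u\psi_\infty=0$ and $\mathcal{B}\psi_\infty=0$. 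For chirality-type $\mathcal{B}$ and the self-adjoint operator $\slashed{D}^u$, integration by parts forces $\psi_\infty=0$, a contradiction. The genuinely delicate point is ensuring the resulting constant depends on $u$ only through $\|\nabla u\|_{L^{2\alpha}}$; this is achieved by running the same compactness step simultaneously in $u_k$ with $\|\nabla u_k\|_{L^{2\alpha}}$ bounded, extracting a weak limit $u_\infty$ for which the boundary value problem remains injective and transferring the estimate back by the uniform interpolation constant above.
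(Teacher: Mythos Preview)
The paper does not supply its own proof of this lemma: it is quoted verbatim from \cite{jost2018geometric} and used as input, so there is nothing in the present paper to compare your argument against.

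On its own terms your sketch is sound and follows the standard route. Writing $\slashed{D}^u=\slashed{\pt}+A(u)\nabla u\cdot$, invoking the $W^{1,p}$ boundary estimate for the well-posed pair $(\slashed{\pt},\mathcal{B})$, and then using H\"older with exponents $\tfrac{1}{p}=\tfrac{1}{2\alpha}+\tfrac{1}{q}$ is exactly the right bookkeeping; your observation that $\alpha>1$ forces $q<2p/(2-p)$, so that Gagliardo--Nirenberg gives an interpolation with an arbitrarily small $W^{1,p}$-coefficient, is the key analytic point. The contradiction-compactness step to drop the residual $\|\psi\|_{L^p}$ is also standard: injectivity of the homogeneous problem $\slashed{D}^u\psi=0$, $\mathcal{B}\psi=0$ for chirality-type $\mathcal{B}$ is what makes it go through. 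The only place to be a little more careful is the uniformity of the constant in $u$: when you run the diagonal compactness in $(u_k,\psi_k)$, you need that $A(u_k)\nabla u_k\cdot\psi_k\to A(u_\infty)\nabla u_\infty\cdot\psi_\infty$ in $L^p$, which follows because $\psi_k\to\psi_\infty$ \emph{strongly} in $L^q$ (compact embedding $W^{1,p}\hookrightarrow L^q$, since $q<2p/(2-p)$) while $A(u_k)\nabla u_k$ stays bounded in $L^{2\alpha}$ and converges weakly there; the strong--weak pairing then suffices. With that detail made explicit your argument is complete.
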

 Motivated by this lemma, they considered the  $\alpha$-Dirac-harmonic flow and got the existence of Dirac-harmonic maps. For a closed manifold $M$, the situation is much more complicated because the kernel of the Dirac operator is a linear space. If the Dirac operator along the initial map has one dimensional kernel, Wittmann proved the short time existence on $M$ whose dimension is $m\equiv0,1,2,4({\rm mod} \ 8)$.

By \cite{nash1956imbedding}, we can isometrically embed $N$ into $\mathbb{R}^q$. Then \eqref{geldh1}-\eqref{geldh2} is equivalent to following system:
\begin{numcases}{}
		\Delta_g{u}=II(du,du)+Re(P(\mathcal{S}(du(e_\beta),e_{\beta}\cdot\psi);\psi)), \label{Emap }\\
		\slashed{\partial}\psi=\mathcal{S}(du(e_\beta),e_{\beta}\cdot\psi),	\label{Edirac}
	\end{numcases}
where $II$ is the second fundamental form of $N$ in $\mathbb{R}^q$, and 
\begin{equation}
\mathcal{S}(du(e_\beta),e_{\beta}\cdot\psi):=(\nabla{u^A}\cdot\psi^B)\otimes II(\partial_{z^A},\partial_{z^B}),
\end{equation}
\begin{equation}
Re(P(\mathcal{S}(du(e_\beta),e_{\beta}\cdot\psi);\psi)):=P(S(\partial_{z^C},\partial_{z^B});\partial_{z^A})Re(\langle\psi^A,du^C\cdot\psi^B\rangle).
\end{equation}
Here $P(\xi;\cdot)$ denotes the shape operator, defined by $\langle P(\xi;X),Y\rangle=\langle A(X,Y),\xi\rangle$ for $X,Y\in\Gamma(TN)$ and $Re(z)$ denotes the real part of $z\in\mathbb{C}$. Together with the \emph{nearest point projection}:
\begin{equation}
\pi: \ N_{\delta}\to N,
\end{equation}
where $N_{\delta}:=\{z\in\mathbb{R}^q| d(z,N)\leq\delta\}$, we can rewrite the evolution equation \eqref{map} as an equation in $\mathbb{R}^q$.

\begin{lem}\cite{wittmann2017short}\cite{chen2017estimates}\label{eq in Euclidean}
A tuple $(u,\psi)$, where $u:[0,T]\times M\to N$ and $\psi\in \Gamma(\Sigma M\otimes u^*TN)$, is a solution of \eqref{map} if and only if 
\begin{equation}
\partial_tu^A-\Delta u^A=-\pi^A_{BC}(u)\langle\nabla u^B,\nabla u^C\rangle-\pi^A_B(u)\pi^C_{BD}(u)\pi^C_{EF}(\psi^D,\nabla u^E\cdot\psi^F)
\end{equation}
on $(0,T)\times M$, for $A=1,\dots,q$. Here we denote the $A$-th component function of $u: [0,T]\times M\to N\subset\mathbb{R}^q$ by $u^A: M\to\mathbb{R}$, write $\pi^A_B(z)$ for the $B$-th partial derivative of the $A$-th component function of $\pi: \mathbb{R}^q\to\mathbb{R}^q$ and the global sections $\psi^A\in\Gamma(\Sigma M)$ are defined by $\psi=\psi^A\otimes(\partial_A\circ u)$, where $(\partial_A)_{A=1,\dots,q}$ is the standard basis of $T\mathbb{R}^q$. Moreover, $\nabla$ and $\langle\cdot,\cdot\rangle$ denote the gradient and the Riemannian metric on $M$, respectively.  
\end{lem}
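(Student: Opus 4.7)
The plan is to verify the equivalence componentwise in the ambient space $\mathbb{R}^q$, by rewriting the two tangent-vector quantities $\tau(u)$ and $\mathcal{R}(u,\psi)$ in extrinsic form via the nearest point projection $\pi$. Since both sides of \eqref{map} lie in $T_{u(x,t)}N$, and $\partial_t u \in T_{u(x,t)}N$ automatically (because $u$ takes values in $N$), applying the tangential projector $d\pi_u = (\pi^A_B(u))$ to the proposed Euclidean equation will recover the intrinsic one, while conversely the intrinsic equation combined with the normal identity $\Delta u - \tau(u) = II(du,du)$ reassembles the full Euclidean identity.

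First I would derive the classical extrinsic expression for the tension field. Because $u$ takes values in $N$ we have $\pi \circ u = u$; differentiating once yields $\partial_i u^A = \pi^A_B(u)\,\partial_i u^B$, and differentiating again, tracing with the metric on $M$, and recalling that $d\pi|_N$ is the orthogonal projection onto $TN$, produces
\[
\Delta u^A = \pi^A_B(u)\,\Delta u^B + \pi^A_{BC}(u)\,\langle \nabla u^B, \nabla u^C \rangle,
\]
so that $\tau(u)^A = \pi^A_B(u)\,\Delta u^B = \Delta u^A - \pi^A_{BC}(u)\,\langle \nabla u^B, \nabla u^C \rangle$. This matches the first term on the right-hand side of the equation to be established.

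Next I would translate the curvature-spinor term $\mathcal{R}(u,\psi)$ into extrinsic quantities. The key tool is the Gauss equation, which writes the intrinsic curvature tensor $R^m_{lij}$ of $N$ as a quadratic expression in the second fundamental form $II$ of $N \hookrightarrow \mathbb{R}^q$, and hence, dualising, in the shape operator $P$. Coupling this with the decomposition $\psi = \psi^A \otimes (\partial_A \circ u)$ and the splitting of $\tilde{\nabla}$ into a tangential piece plus an $II$-contribution, a direct computation identifies $\tfrac12 R^m_{lij}\langle \psi^i, \nabla u^l \cdot \psi^j \rangle_{\Sigma M}\,\partial_{y^m}$ with $\mathrm{Re}(P(\mathcal{S}(du(e_\beta), e_\beta \cdot \psi);\psi))$, as already recorded in \eqref{Emap }. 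Writing the latter in ambient components and projecting tangentially produces exactly the stated $\pi^A_B(u)\,\pi^C_{BD}(u)\,\pi^C_{EF}(\psi^D, \nabla u^E \cdot \psi^F)$; the outer factor $\pi^A_B(u)$ reflects that $\mathcal{R}(u,\psi)$ is already tangent to $N$.

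Assembling the two computations and moving $\Delta u^A$ to the left-hand side reproduces the claimed equation, while the converse direction follows by applying the tangential projection $\pi^A_B(u)$ to the Euclidean identity and recognising the result as \eqref{map}. The main obstacle is the index bookkeeping in the curvature-spinor term: one has to verify that the Gauss identity, together with the rewriting of the twisted connection $\tilde\nabla$ via $\pi$ and its derivatives, produces precisely the $\pi^A_B\,\pi^C_{BD}\,\pi^C_{EF}$ contraction pattern rather than a mis-grouped variant, and that the real-part and Clifford-skew-symmetry conventions line up with those used in the Dirac equation \eqref{Edirac}. Since the lemma is already available in \cite{wittmann2017short,chen2017estimates}, I would follow their calculation here and simply check consistency of the conventions used in the present paper.
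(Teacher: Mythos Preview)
Your proposal is correct and follows the standard extrinsic computation. Note that the paper itself does not supply a proof of this lemma: it is simply quoted from \cite{wittmann2017short,chen2017estimates}, which is exactly what you suggest doing in your final sentence.
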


For future reference, we define
\begin{equation}\label{F1A}
F_1^A(u):=-\pi^A_{BC}(u)\langle\nabla u^B,\nabla u^C\rangle,
\end{equation}
\begin{equation}\label{F2A}
F_2^A(u,\psi):=-\pi^A_B(u)\pi^C_{BD}(u)\pi^C_{EF}(\psi^D,\nabla u^E\cdot\psi^F).
\end{equation}
Note that for $u\in C^1(M,N)$ and $\psi\in\Gamma(\Sigma M\otimes u^*TN)$ we have
\begin{equation}
II(du_p(e_\alpha), du_p(e_\alpha)))=-F_1^A(u)|_p\partial_A|_{u(p)},
\end{equation}
\begin{equation}
\mathcal{R}(\phi,\psi)|_p=-F_2^A(u,\psi)|_p\partial_A|_{u(p)}
\end{equation}
for all $p\in M$, where $\{e_\alpha\}$ is an orthonormal basis of $T_pM$. 

Next, for every $T>0$, we denote by $X_T$ the Banach space of bounded maps:
\begin{equation}
X_T:=B([0,T]; C^1(M,\mathbb{R}^q)),
\end{equation}
\begin{equation}
\|u\|_{X_T}:=\max\limits_{A=1,\dots,q}\sup\limits_{t\in[0,T]}(\|u^A(t,\cdot)\|_{C^0(M)}+\|\nabla u^A(t,\cdot)\|_{C^0(M)}).
\end{equation}
For any map $v\in X_T$, the closed ball with center $v$ and radius $R$ in $X_T$ is defined by
\begin{equation}
B_R^T(v):=\{u\in X_T|\|u-v\|\leq R\}.
\end{equation}
We denote by $P^{u_t,v_s}=P^{u_t,v_s}(x)$ the parallel transport of $N$ along the unique shortest geodesic from $\pi(u(x,t))$ to $\pi(v(x,s))$. We also denote by $P^{u_t,v_s}$ the inducing  mappings
\begin{equation}
(\pi\circ u_t)^*TN\to(\pi\circ v_s)^*TN,
\end{equation}
\begin{equation}
\Sigma M\otimes(\pi\circ u_t)^*TN\to\Sigma M\otimes(\pi\circ v_s)^*TN
\end{equation}
and 
\begin{equation}
\Gamma_{C^1}(\Sigma M\otimes(\pi\circ u_t)^*TN)\to\Gamma_{C^1}(\Sigma M\otimes(\pi\circ v_s)^*TN).
\end{equation}

Now, let us define 
\begin{equation}
\Lambda(u_t)=\sup\{\tilde\Lambda| {\rm spec}(\slashed{D}^{\pi\circ u_t})\setminus\{0\}\subset\mathbb{R}\setminus(-\tilde\Lambda(u_t),\tilde\Lambda(u_t))\}
\end{equation}
 and $\gamma_t(x): [0,2\pi]\to\mathbb{C}$ as
 \begin{equation}
 \gamma_t(x):=\frac{\Lambda(u_t)}{2}e^{ix}.
 \end{equation}
 In general, we also denote by $\gamma$ the curve $\gamma(x): [0,2\pi]\to\mathbb{C}$ as
 \begin{equation}\label{general curve}
 \gamma(x):=\frac{\Lambda}{2}e^{ix}
 \end{equation}
 for some constant $\Lambda$ to be determined. Then the orthogonal projection onto ${\rm ker}(\slashed{D}^{\pi\circ u_t})$, which is the mapping \begin{equation}
\Gamma_{L^2}(\Sigma M\otimes(\pi\circ u_t)^*TN)\to\Gamma_{L^2}(\Sigma M\otimes(\pi\circ u_t)^*TN),
\end{equation}
can be written by the resolvent by
\begin{equation}
s\mapsto-\frac{1}{2\pi i}\int_{\gamma_t}R(\lambda,\slashed{D}^{\pi\circ u_t})sd\lambda,
\end{equation}
where $R(\lambda,\slashed{D}^{\pi\circ u_t}): \Gamma_{L^2}\to\Gamma_{L^2}$ is the resolvent of $\slashed{D}^{\pi\circ u_t}: \Gamma_{W^{1,2}}\to\Gamma_{L^2}$.

Finally, the following density lemma is very useful for us to extend the flow beyond the singular time.

\begin{lem}\label{density}\cite{wittmann2019minimal}
Let $M$ be a closed spin surface and $(N,h)$ a real analytic closed manifold. Suppose there exists a map $u_0\in C^{2+\mu}(M,N)$ for some $\mu\in(0,1)$ such that ${\rm dim}_{\mathbb{H}}{\rm ker}\slashed{D}^{u_0}=1$. Then  the kernel of $\slashed{D}^{u}$ is minimal for generic $u\in[u_0]$, i.e., for a $C^\infty$-dense and $C^1$-open subset of $[u_0]$.
\end{lem}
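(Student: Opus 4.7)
The plan is to prove separately that $\mathcal{U}:=\{u\in[u_0]:\dim_{\mathbb{H}}\ker\slashed{D}^u=1\}$ is $C^1$-open and $C^\infty$-dense in $[u_0]$. Openness will follow from standard resolvent perturbation theory for the $u$-dependent Dirac operator, while density will require a real-analytic deformation argument that crucially exploits the analytic structure on $N$.

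For openness, I would use that $u\mapsto\slashed{D}^u$ is continuous from $C^1(M,N)$ into the space of self-adjoint Fredholm operators on $L^2$-sections in the norm-resolvent topology; this is exactly the continuity that makes the resolvent integral defined using the contour \eqref{general curve} depend continuously on $u$. Consequently the spectral projection onto eigenvalues in a small disc about $0$ varies continuously, and $u\mapsto\dim_{\mathbb{H}}\ker\slashed{D}^u$ is upper semi-continuous. Since by hypothesis the value $1$ is the minimal dimension attained in $[u_0]$ (the kernel cannot shrink further because this $\mathbb{H}$-dimension is an index-theoretic invariant), at any point of $\mathcal{U}$ the kernel can neither grow nor shrink under small $C^1$-perturbations of $u$; hence $\mathcal{U}$ is $C^1$-open.

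For density, fix $u\in[u_0]$ with $\dim_{\mathbb{H}}\ker\slashed{D}^u=:k>1$ and $\eps>0$. I would construct a real-analytic one-parameter family $v_s\in[u_0]$ with $v_0=u$ that fits Kato's framework of \emph{analytic families of type (A)}: pick a smooth $\eta:M\to\mathbb{R}^q$, set $v_s:=\pi(u+s\eta)$ for small $|s|$, and use that the nearest-point projection $\pi$ is real-analytic since $N$ is real-analytically embedded in $\mathbb{R}^q$. After trivialising the twisted bundles $\Sigma M\otimes v_s^*TN$ via parallel transport along the analytic geodesics $r\mapsto\pi(u+rs\eta)$, the family $s\mapsto\slashed{D}^{v_s}$ becomes a real-analytic family of self-adjoint Fredholm operators on a fixed Hilbert space. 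By the Kato--Rellich theorem the eigenvalues near $0$ organise into finitely many real-analytic branches, so $s\mapsto\dim_{\mathbb{H}}\ker\slashed{D}^{v_s}$ is constant off a discrete set and can only jump upward at the exceptional values.

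The main obstacle is guaranteeing that this generic constant dimension along the family is strictly smaller than $k$, because then iterating finitely many times yields the desired minimal-kernel approximation. This is precisely where the existence of the reference map $u_0$ with $\dim_{\mathbb{H}}\ker\slashed{D}^{u_0}=1$ enters decisively: it ensures that the ``bad'' set $\{w\in[u_0]:\dim_{\mathbb{H}}\ker\slashed{D}^w\geq k\}$ is a proper, locally real-analytic subvariety of $[u_0]$. A transversality argument in the (infinite-dimensional) space of perturbations $\eta$---again exploiting the real-analytic dependence of $\slashed{D}^{v_s}$ on $(\eta,s)$---then shows that generic $\eta$ yields a curve $\{v_s\}$ not contained in this subvariety, so the generic kernel dimension along $\{v_s\}$ drops below $k$. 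Iterating finitely often produces $v\in\mathcal{U}$ with $\|u-v\|_{C^\infty}<\eps$, completing the density argument and hence the lemma.
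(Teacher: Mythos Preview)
The paper does not itself prove this lemma; it is quoted verbatim from \cite{wittmann2019minimal} and used as a black box (see the citation attached to the lemma statement). So there is no ``paper's own proof'' to compare against---the authors defer entirely to Wittmann's result.

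Your sketch is in the right spirit and the openness half is fine: the upper semi-continuity of $u\mapsto\dim_{\mathbb{H}}\ker\slashed{D}^u$ under $C^1$-perturbations is exactly what Lemma~\ref{Dirac along maps u} and Lemma~\ref{dim of kernel u} in the Appendix establish, and the lower bound $\dim_{\mathbb{H}}\ker\slashed{D}^u\geq1$ is indeed an index invariant in the homotopy class.

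For density, your strategy of one-parameter analytic families and Kato--Rellich is the correct framework, but the argument as written has a gap. You assert that a ``transversality argument'' shows that a generic local perturbation $\eta$ produces a curve $\{v_s\}$ along which the kernel dimension drops below $k$, and then you iterate. The problem is that you have not explained why \emph{any} such drop is available locally: a priori, every analytic curve through $u$ built from a small $\eta$ could lie entirely in the stratum $\{\dim_{\mathbb{H}}\ker=k\}$. What actually forces the drop is a \emph{global} input---the existence of $u_0$ with minimal kernel in the same homotopy class. A cleaner way to close the argument is to connect $u$ to $u_0$ by a path in $[u_0]$, approximate that path by a real-analytic one (this is where analyticity of $N$ and of $\pi$ is used), and invoke Kato--Rellich along the whole path: the kernel dimension is then generically constant, and since it equals $1$ at the $u_0$-end, it equals $1$ for parameters arbitrarily close to the $u$-end. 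This gives density in one step without iteration and without an unspecified transversality claim. Your local-iteration scheme could be made to work, but only after you supply a genuine reason why at least one eigenvalue branch is not identically zero for some choice of $\eta$---and that reason ultimately has to trace back to the global existence of $u_0$.
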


\section{The heat flow for $\alpha$-Dirac-harmonic maps}
In this section, we will prove the short-time existence of the heat flow for $\alpha$-Dirac-harmonic maps. Since we are working on a closed surface $M$, we cannot uniquely solve the Dirac equation in the following system:
\begin{numcases}{}
		\partial_tu=\frac{1}{(1+|\nabla u|^2)^{\alpha-1}}\bigg(\tau^\alpha(u)-\frac{1}{\alpha}\mathcal{R}(u,\psi)\bigg), \label{alpha map}\\
		\slashed{D}^{u}\psi=0.	\label{alpha dirac}
	\end{numcases}
 The short time existence and its extension are the obstacles. This system (if it converges) leads to a $\alpha$-Dirac-harmonic map which is a solution of the system
 \begin{equation}\label{alpha geldh}
\begin{split}
\tau^\alpha(u)&:=\tau((1+|du|^2)^\alpha)=\frac{1}{\alpha}\mathcal{R}(u,\psi)\\
\slashed{D}^u\psi&=0.
\end{split}
\end{equation}
and equivalently a critical point of  functional 
 \begin{equation}
 L^\alpha(u,\psi)=\frac12\int_M(1+|du|^2)^\alpha+\frac12\int_M\langle\psi,\slashed{D}^u\psi\rangle_{\Sigma M\otimes\phi^*TN},
 \end{equation}
 where $\tau$ is the tension field.

\subsection{Short time existence}

As in Section 2, we now embed $N$ into $\mathbb{R}^q$. Let $u: M\to N$ with $u=(u^A)$ and denote the spinor along the map $u$ by $\psi=\psi^A\otimes(\partial_A\circ u)$, where $\psi^A$ are spinors over $M$. For any smooth map $\eta\in C^\infty_0(M,\mathbb{R}^q)$ and any smooth spinor field $\xi\in C^\infty_0(\Sigma M\otimes\mathbb{R}^q)$, we consider the variation 
\begin{equation}\label{var}
u_t=\pi(u+t\eta), \ \ \ \psi^A_t=\pi^A_B(u_t)(\psi^B+t\xi^B),
\end{equation}
where $\pi$ is the nearest point projection as in Section 2. Then we have
\begin{lem}\label{alpha EL}
The Euler-Lagrange equations for $L^\alpha$ are 
\begin{equation}
\begin{split}
\Delta u^A&=-2(\alpha-1)\frac{\nabla^2_{\beta\gamma}u^B\nabla_{\beta}u^B\nabla_{\gamma}u^A}{1+|\nabla u|^2}+\pi^A_{BC}(u)\langle\nabla{u^B},\nabla{u^C}\rangle\\
&\quad+\frac{\pi^A_B(u)\pi^C_{BD}(u)\pi^C_{EF}(u)\langle\psi^D,\nabla{u}^E\cdot\psi^F\rangle}{\alpha(1+|\nabla{u}|^2)^{\alpha-1}}
\end{split}\end{equation}
and 
\begin{equation}
\slashed{\partial}\psi^A=\pi^A_{BC}(u)\nabla{u}^B\cdot\psi^C.
\end{equation}
\end{lem}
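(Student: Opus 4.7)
My plan is to compute the first variation $\frac{d}{dt}\big|_{t=0}L^\alpha(u_t,\psi_t)=0$ for the variation family $(u_t,\psi_t)$ defined in \eqref{var}, then read off the two Euler--Lagrange equations by specializing to the two one-parameter subfamilies.

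Setting $\eta=0$ and varying only $\xi$: the map is unchanged ($u_t\equiv u$), so only the spinor term in $L^\alpha$ contributes. Using the self-adjointness of $\slashed{D}^u$, the first variation reduces to $\int\langle \pi^A_B(u)\xi^B,\slashed{D}^u\psi\rangle_{\Sigma M}$ in Euclidean coordinates; demanding that this vanish for all $\xi^B$ forces the tangential (to $T_uN$) part of $\slashed{D}^u\psi$ to vanish, which, in the extrinsic parametrization used for Lemma~\ref{eq in Euclidean}, is exactly $\slashed{\partial}\psi^A=\pi^A_{BC}(u)\nabla u^B\cdot\psi^C$.

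Setting $\xi=0$ and varying only $\eta$: the derivative $\dot u:=d\pi(u)\cdot\eta$ ranges over all of $T_uN$ as $\eta$ ranges over $\mathbb{R}^q$. For the $\alpha$-energy part $\frac{1}{2}\int(1+|du_t|^2)^\alpha$, the chain rule and integration by parts give $-\alpha\int \operatorname{div}\!\bigl((1+|\nabla u|^2)^{\alpha-1}\nabla u^A\bigr)\dot u^A$; expanding the divergence produces $\alpha(1+|\nabla u|^2)^{\alpha-1}\Delta u^A$ together with the correction $2\alpha(\alpha-1)(1+|\nabla u|^2)^{\alpha-2}\nabla_\beta u^B\nabla^2_{\gamma\beta}u^B\nabla_\gamma u^A$. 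The variation of the Dirac action follows the classical Dirac-harmonic calculation of \cite{chen2006dirac}: differentiating the connection on $u_t^*TN$ and the metric $h_{AB}(u_t)$ in the direction $\dot u$, then using the Dirac equation from the previous step to discard the $\slashed{D}^u\psi$ term, one is left with $\int\langle\mathcal{R}(u,\psi),\dot u\rangle$, whose Euclidean form $\mathcal{R}(u,\psi)^A=\pi^A_B(u)\pi^C_{BD}(u)\pi^C_{EF}(u)\langle\psi^D,\nabla u^E\cdot\psi^F\rangle$ is read off exactly as in Lemma~\ref{eq in Euclidean}.

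To conclude I would sum the two contributions, project onto $T_uN$ via $\pi^A_D(u)$ (using that $\dot u$ is an arbitrary tangent vector to $N$), identify the normal component of $\Delta u^A$ with the second fundamental form expression $\pi^A_{BC}(u)\langle\nabla u^B,\nabla u^C\rangle$ exactly as in Lemma~\ref{eq in Euclidean}, and finally divide through by $\alpha(1+|\nabla u|^2)^{\alpha-1}$; this normalizes the coefficient of $\Delta u^A$ to $1$ and forces precisely the denominator $\alpha(1+|\nabla u|^2)^{\alpha-1}$ in front of the spinor term, yielding the equation in the stated form. No analytically new ingredient is required here: the argument is a routine combination of the Sacks--Uhlenbeck $\alpha$-energy variation with the standard Dirac--harmonic variational calculus, and the main bookkeeping---tracking the extrinsic versus intrinsic correction terms---is handled systematically by the identity $\dot u=d\pi(u)\eta$.
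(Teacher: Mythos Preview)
Your proposal is correct and follows essentially the same route as the paper. The only cosmetic difference is organizational: the paper computes $\frac{d}{dt}\big|_{t=0}L^\alpha(u_t,\psi_t)$ with both $\eta$ and $\xi$ varying simultaneously, splits the result into the $\alpha$-energy piece $I$ and the Dirac piece $II$, and then separates by test function (the $\xi^A$-coefficient yields the Dirac equation, and the $\eta^A$-coefficient---after adding and subtracting $\pi^C_{EF}\nabla u^E\cdot\psi^F$ inside $II$ so that the Dirac equation kills one term---yields the map equation), whereas you obtain the same two equations by treating the $\eta=0$ and $\xi=0$ subfamilies in turn; the integration by parts, the appearance of $\pi^A_{BC}\langle\nabla u^B,\nabla u^C\rangle$, and the final division by $\alpha(1+|\nabla u|^2)^{\alpha-1}$ are identical in both.
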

\begin{proof}
Suppose $(u,\psi)$ is a critical point of $L^\alpha$, then for the variation \eqref{var} we have
\begin{equation}
\begin{split}
\frac{dL^\alpha(u_t,\psi_t)}{dt}|_{t=0}&=\alpha\int_M(1+|\nabla{u}|^2)^{\alpha-1}\langle\nabla{u^A},\pi^A_B\nabla\eta^B+\pi^A_{BC}\nabla{u^C}\eta^B\rangle\\
&\quad+\int_M\langle\slashed\partial\psi^A,\pi^A_B\xi^B+\pi^A_{BC}\pi^C_D\psi^B\eta^D\rangle,\\
&=:I+II.
\end{split}\end{equation}
Then the lemma directly follows from the following computations.
\begin{equation*}
\begin{split}
I&=\alpha\int_M(1+|\nabla{u}|^2)^{\alpha-1}\langle\nabla{u^A},\nabla\eta^A\rangle+\alpha\int_M(1+|\nabla{u}|^2)^{\alpha-1}\pi^A_{BC}\langle\nabla{u^B},\nabla{u^C}\rangle\eta^A\\
&=-\alpha\int_M(1+|\nabla{u}|^2)^{\alpha-1}\Delta{u^A}\eta^A-\alpha(\alpha-1)\int_M(1+|\nabla{u}|^2)^{\alpha-2}\langle\nabla|\nabla{u}|^2,\nabla{u^A}\rangle\eta^A\\
&=-\alpha\int_M(1+|\nabla{u}|^2)^{\alpha-1}\bigg(\Delta{u^A}+2(\alpha-1)\frac{\nabla^2_{\beta\gamma}u^B\nabla_{\beta}u^B\nabla_{\gamma}u^A}{1+|\nabla u|^2}-\pi^A_{BC}(u)\langle\nabla{u^B},\nabla{u^C}\rangle\bigg)\eta^A.
\end{split}
\end{equation*}

\begin{equation*}
\begin{split}
II&=\int_M\langle\slashed\partial\psi^A-\pi^A_{BC}\nabla{u^B}\cdot\psi^C,\xi^A\rangle+\int_M\pi^A_B\pi^C_{BD}\langle\psi^D,\slashed\partial\psi^C\rangle\eta^A\\
&=\int_M\langle\slashed\partial\psi^A-\pi^A_{BC}\nabla{u^B}\cdot\psi^C,\xi^A\rangle+\int_M\pi^A_B\pi^C_{BD}\langle\psi^D,\slashed\partial\psi^C-\pi^C_{EF}\nabla{u^E}\cdot\psi^F\rangle\eta^A\\
&+\int_M\pi^A_B\pi^C_{BD}\langle\psi^D,\pi^C_{EF}\nabla{u^E}\cdot\psi^F\rangle\eta^A.
\end{split}
\end{equation*}
\end{proof}

 Lemma \ref{alpha EL} implies that \eqref{alpha map}-\eqref{alpha dirac} is equivalent to 
\begin{numcases}{}
\begin{split}
		\partial_tu^A&=\Delta{u^A}+2(\alpha-1)\frac{\nabla^2_{\beta\gamma}u^B\nabla_{\beta}u^B\nabla_{\gamma}u^A}{1+|\nabla u|^2}-\pi^A_{BC}(u)\langle\nabla{u^B},\nabla{u^C}\rangle \label{system in Euclidean1}\\
		&\quad-\frac{\pi^A_B(u)\pi^C_{BD}(u)\pi^C_{EF}(u)\langle\psi^D,\nabla{u}^E\cdot\psi^F\rangle}{\alpha(1+|\nabla{u}|^2)^{\alpha-1}}
		\end{split}\\
		\slashed{D}^{\pi\circ u}\psi=0,	\label{system in Euclidean2}
	\end{numcases}

Now, let us state the main result of this subsection.

\begin{thm}\label{short-time alpha flow}
Let $M$ be a closed surface, and $N$ a closed $n$-dimensional Riemannian manifold. Let $u_0\in C^{2+\mu}(M,N)$ for some $0<\mu<1$ with ${\rm dim}_{\mathbb H}{\rm ker}(\slashed{D}^{u_0})=1$ and $\psi_0\in{\rm ker}(\slashed{D}^{u_0})$ with $\|\psi_0\|_{L^2}=1$. Then there exists $\epsilon_1=\epsilon_1(M,N)>0$ such that, for any $\alpha\in(1,1+\epsilon_1)$, the problem \eqref{alpha map}-\eqref{alpha dirac} has a solution $(u,\psi)$ with 
\begin{equation}\label{initial value}
		\begin{cases}
		 \|\psi_t\|_{L^2}=1, & \ \forall t\in[0,T],\\
		u|_{t=0}=u_0, \  \psi|_{t=0}=\psi_0.
		\end{cases}
	\end{equation}
 satisfying
\begin{equation}
u\in C^{2+\mu,1+\mu/2}(M\times[0,T],N)
\end{equation}
and
\begin{equation}
\psi\in C^{\mu,\mu/2}(M\times[0,T], \Sigma M\otimes u^*TN)\cap L^\infty([0,T];C^{1+\mu}(M)).
\end{equation}
for some $T>0$. 
\end{thm}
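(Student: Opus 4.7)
The plan is to run a Banach fixed point argument in the space $X_T$ introduced in Section~2, set up inside a closed ball $B_R^T(u_0)$ around the time-constant extension of the initial map. Given a candidate $u\in B_R^T(u_0)$, I would first solve the constraint Dirac equation \eqref{alpha dirac} to obtain a unit spinor $\psi[u]_t\in\ker\slashed{D}^{\pi\circ u_t}$, then substitute $(u,\psi[u])$ into the right-hand side of \eqref{system in Euclidean1} and solve the resulting linear parabolic system for a new map $\Phi(u)=\tilde u$. A fixed point of $\Phi$, paired with $\psi[u]$, will be the sought solution.

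The construction of $\psi[u]$ leans on the minimality hypothesis $\dim_{\mathbb{H}}\ker\slashed{D}^{u_0}=1$. Because the spectrum of $\slashed{D}^{\pi\circ u_t}$ varies continuously with $u_t$ in the $C^1$-topology, I can choose $R$ small so that the spectral gap $\Lambda(u_t)\geq\Lambda(u_0)/2$ uniformly over $u\in B_R^T(u_0)$, which keeps the resolvent $R(\lambda,\slashed{D}^{\pi\circ u_t})$ uniformly bounded along the contour $\gamma$ from \eqref{general curve}. I would then set
\begin{equation*}
\Pi_t := -\frac{1}{2\pi i}\int_\gamma R(\lambda,\slashed{D}^{\pi\circ u_t})\,d\lambda, \qquad \psi[u]_t := \frac{\Pi_t(P^{u_0,u_t}\psi_0)}{\|\Pi_t(P^{u_0,u_t}\psi_0)\|_{L^2}}.
\end{equation*}
Since $P^{u_0,u_t}\psi_0$ is $C^0$-close to $\psi_0$ the denominator stays bounded below; by construction $\psi[u]_t$ lies in $\ker\slashed{D}^{\pi\circ u_t}$, has unit $L^2$-norm, and reduces to $\psi_0$ at $t=0$. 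The Appendix delivers exactly the statement I need at this point: the map $u\mapsto\psi[u]$ is Lipschitz from $X_T$ to $L^\infty([0,T];C^{1+\mu}(M,\Sigma M\otimes\mathbb{R}^q))$ with Lipschitz constant independent of $T$.

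With $\psi[u]$ in hand, \eqref{system in Euclidean1} becomes a linear inhomogeneous parabolic system for $\tilde u$. Its principal operator $\pt_t-\Delta-2(\alpha-1)\frac{\nabla_\beta u^B\nabla_\gamma u^B}{1+|\nabla u|^2}\pt_\beta\pt_\gamma$ is uniformly parabolic with ellipticity constants tending to $1$ as $\alpha\to 1$, its coefficients are $C^\mu$ in space because $u\in C^{1+\mu}$, and the inhomogeneity is $C^{\mu,\mu/2}$ by the Lipschitz bound on $\psi[u]$ and the smoothness of $\pi$. Classical Schauder theory for linear parabolic systems on closed manifolds then gives a unique $\tilde u\in C^{2+\mu,1+\mu/2}(M\times[0,T],\mathbb{R}^q)$ with $\tilde u|_{t=0}=u_0$ and a short-time estimate of the form
\begin{equation*}
\|\tilde u-u_0\|_{X_T}\leq C\,T^{\mu/2}\bigl(\|u_0\|_{C^{2+\mu}}+\|F_1(u)\|_{C^{\mu,\mu/2}}+\|F_2(u,\psi[u])\|_{C^{\mu,\mu/2}}\bigr).
\end{equation*}
For $T$ small this forces $\Phi(B_R^T(u_0))\subset B_R^T(u_0)$; subtracting the equations for $\Phi(u)$ and $\Phi(v)$ and exploiting the Lipschitz dependence of $F_1,F_2$ on $u$ and of $\psi[\cdot]$ on $u$ shows that $\Phi$ is a contraction for $T$ small enough. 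The Banach fixed point theorem produces $u$, and $(u,\psi[u])$ solves \eqref{alpha map}-\eqref{alpha dirac}, \eqref{initial value}.

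Two items close the argument and highlight the main technical difficulty. First, the fixed point $u$ a priori takes values only in the tubular neighborhood $N_\delta$; since the nonlinearities \eqref{F1A}-\eqref{F2A} are tangent to $N$ when their map argument lies in $N$ (this is the very reason $\pi$ was inserted in the Dirac operator), a standard computation on $d^2(u(\cdot,t),N)$ produces a linear parabolic inequality with zero initial data and forcing that vanishes on $N$, so $u$ remains in $N$. Second, the spinor regularity $\psi\in C^{\mu,\mu/2}\cap L^\infty_tC^{1+\mu}_x$ follows by bootstrapping elliptic Schauder estimates for the slice-wise equation $\slashed{D}^u\psi=0$ once $u\in C^{2+\mu,1+\mu/2}$, together with the $t$-continuity transferred from $u$ by Lipschitz dependence. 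The principal obstacle, as already foreshadowed in Wittmann's analysis, is the Lipschitz dependence $u\mapsto\psi[u]$: it requires differentiating a contour integral of resolvents of a family of Dirac operators acting between the moving bundles $\Sigma M\otimes(\pi\circ u_t)^*TN$, patched by the parallel transports $P^{u_0,u_t}$, at the low regularity $u\in X_T$. The quasilinear $(\alpha-1)$-correction is by comparison benign; for $\alpha\in(1,1+\epsilon_1)$ with $\epsilon_1$ small it is an $O(\alpha-1)$-perturbation of the heat operator, and this is the only place where the smallness of $\epsilon_1$ in the statement enters.
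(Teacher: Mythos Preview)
Your plan follows Wittmann's $\alpha=1$ template, but the paper explicitly abandons the contraction route here (see the remark just before Theorem~\ref{uniqueness}: ``we cannot apply the fixed point theorem to the $\alpha$-Dirac-harmonic map flow''), and your outline hides the reason. Two concrete issues. First, the Appendix does \emph{not} give Lipschitz dependence $u\mapsto\psi[u]$ into $L^\infty_tC^{1+\mu}_x$; Lemma~\ref{lip} only yields the pointwise $C^0$ estimate \eqref{psi lip}, together with \emph{uniform} (not Lipschitz) $C^{1+\mu}$ and $C^{\mu,\mu/2}$ bounds \eqref{C1+mu}, \eqref{Cmu}. Second, the quasilinear second-order term obstructs the contraction in $X_T=B([0,T];C^1)$: for $u\in X_T$ the coefficients $\nabla_\beta u^B\nabla_\gamma u^A/(1+|\nabla u|^2)$ are only $C^0$, so the Schauder step you invoke does not apply (you write ``$u\in C^{1+\mu}$'' midway, which $X_T$ does not provide); and if you upgrade the ambient space so that Schauder is available, subtracting $\Phi(u)-\Phi(v)$ throws a term of the form $(\alpha-1)\bigl[\text{coeff}(\nabla u)-\text{coeff}(\nabla v)\bigr]\nabla^2\Phi(v)$ onto the right-hand side, whose H\"older norm is governed by $\|\nabla u-\nabla v\|_{C^{\mu,\mu/2}}$ rather than by the $X_T$-distance. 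Smallness of $\alpha-1$ does not rescue this, because the required Lipschitz constant is in the wrong norm.

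The paper's proof replaces contraction by iteration and compactness, carried out in $V^T_\kappa\subset C^{1+\mu,(1+\mu)/2}$ rather than in $X_T$. Given $v_k\in V^T$, one solves the \emph{linear} parabolic system \eqref{D1}--\eqref{D2} with coefficients and inhomogeneity frozen at $(v_k,\psi(v_k+u_0))$ to produce $v_{k+1}$; the uniform bounds \eqref{C1+mu}, \eqref{Cmu} on $\psi$ combined with linear Schauder theory and an interpolation inequality keep $v_{k+1}\in V^T$ with a uniform $C^{2+\mu,1+\mu/2}$ bound for $T$ small, and a subsequence of $(v_k,\psi(v_k+u_0))$ converges in $C^{2,1}\times C^0$ to a solution. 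Only the uniform bounds on $\psi$ are used here, never Lipschitz continuity in a H\"older norm. Your Step~2 (the maximum-principle argument forcing $u$ to stay in $N$) and the final regularity upgrade via the $\varepsilon$-regularity Lemma~\ref{small energy regularity} match what the paper does.
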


\begin{proof}
{\bf Step 1:} Solving \eqref{system in Euclidean1}-\eqref{system in Euclidean2} in $\mathbb{R}^q$.

  In this step, we want to find a solution $u: M\times[0,T]\to\mathbb{R}^q$ and $\psi_t: M\to\Sigma M\otimes(\pi\circ{u_t})^*TN$ of \eqref{system in Euclidean1}-\eqref{system in Euclidean2} with the initial values \eqref{initial value}.  We first give a solution to \eqref{system in Euclidean2} in a neighborhood of $u_0$. For any $T>0$, we can choose $\epsilon$, $\delta$ and $R$ as in the Appendix such that
\begin{equation}\label{property1'}
u(x,t)\in N_\delta
\end{equation}
and 
\begin{equation}\label{property2'}
d^N((\pi\circ u)(x,t),(\pi\circ v)(x,s))<\epsilon<\frac12{\rm inj}(N)
\end{equation}
for all $u,v\in B^T_R:=B^T_R(\bar{u}_0)=\{u\in X_T|\|u-\bar{u}_0\|_{X_T}\leq R\}\cap\{u|_{t=0}=u_0\}$, $x\in M$ and $t,s\in[0,T]$, where $\bar{u}_0(x,t)=u_0(x)$ for any $t\in[0,T]$. If $R$ is small enough, then by Lemma \ref{dim of kernel u}, we have  
\begin{equation}
{\rm dim}_{\mathbb{K}}{\rm ker}(\slashed{D}^{\pi\circ u_t})=1
\end{equation}
and there exists $\Lambda=\frac12\Lambda(u_0)$ such that 
\begin{equation}
\#\{{\rm spec}(\slashed{D}^{\pi\circ u_t})\cap[-\Lambda,\Lambda]\}=1
\end{equation}
for any $u\in B_R^T$ and $t\in[0,T]$, where $\Lambda(u_0)$ is a constant such that ${\rm spec}(\slashed{D}^{u_0})\setminus\{0\}\subset\mathbb{R}\setminus[-\Lambda(u_0),\Lambda(u_0)]$. Furthermore, for $\psi_0\in{\rm ker}(\slashed{D}^{u_0})$ with $\|\psi_0\|_{L^2}=1$, Lemma \ref{projection to kernel} implies that
\begin{equation}
\sqrt{\frac34}\leq\|\tilde\psi_1^{u_t}\|_{L^2}\leq1
\end{equation} 
for any $u\in B_{R_1}^{T}$ and $t\in[0,T]$, where $\tilde\psi^{u_t}=P^{u_0,u_t}\psi=\tilde\psi^{u_t}_1+\tilde\psi^{u_t}_2$ with respect to the decomposition $\Gamma_{L^2}={\rm ker}(\slashed{D}^{\pi\circ u_t})\oplus({\rm ker}(\slashed{D}^{\pi\circ u_t}))^\bot$ and $R_1=R_1(R,\epsilon,u_0)>0$. 

Now, for any $T>0$ and $\kappa>0$, we define
$$V^T_{\kappa}:=\{v\in C^{1+\mu,\frac{1+\mu}{2}}(M\times[0,T])|\|v\|_{C^{1+\mu,\frac{1+\mu}{2}}}\leq\kappa, \ v|_{M\times\{0\}}=0\}.$$
Then, there exists $\kappa_{R_1}:=\kappa(R_1)>0$ such that 
\begin{equation}
u_0+v\in B^{T}_{R_1}, \ \forall v\in V_{\kappa}^T, \ \forall\kappa\leq\kappa_{R_1}.
\end{equation}
Now, we denote $\kappa_0:=\kappa_{R_1}$ and $V^T:= V^T_{\kappa_0}$.

For every $v\in V^T$, $u_0+v\in B^{T}_{R_1}$, Lemma \ref{lip} gives us a solution $\psi(v+u_0)$ to the  constraint equation. Since $v+u_0\in C^{1+\mu}(M)$, by $L^p$ regularity \cite{wittmann2017short} and Schauder estimate \cite{chen2017estimates}, we have 
\begin{equation}\label{C1+mu}
\|\psi(v+u_0)\|_{C^{1+\mu}(M)}\leq C(\mu, M, N, \kappa_0, \|u_0\|_{C^{1+\mu}(M)}).
\end{equation}
For any $0<t,s<T$, we also have
\begin{equation*}
\begin{split}
&\quad\slashed\partial(\psi(v+u_0)(t)-\psi(v+u_0)(s))\\
&=-\Gamma(\pi\circ(v+u_0)(t))\#\nabla(\pi\circ(v+u_0)(t))\#\psi(v+u_0)(t)\\
&\quad+\Gamma(\pi\circ(v+u_0)(s))\#\nabla(\pi\circ(v+u_0)(s))\#\psi(v+u_0)(s)\\
&=-\Gamma(\pi\circ(v+u_0)(t))\#\nabla(\pi\circ(v+u_0)(t))\#(\psi^v(t)-\psi(v+u_0)(s))\\
&\quad-\Gamma(\pi\circ(v+u_0)(t))\#(\nabla(\pi\circ(v+u_0)(t))-\nabla(\pi\circ(v+u_0)(s)))\#\psi(v+u_0)(t)\\
&\quad-(\Gamma(\pi\circ(v+u_0)(t))-\Gamma(\pi\circ(v+u_0)(s)))\#\nabla(\pi\circ(v+u_0)(s))\#\psi(v+u_0)(s),
\end{split}\end{equation*}
that is,
\begin{equation*}
\begin{split}
&\quad\slashed{D}^{\pi\circ v(t)}(\psi(v+u_0)(t)-\psi(v+u_0)(s))\\
&=-\Gamma(\pi\circ(v+u_0)(t))\#(\nabla(\pi\circ(v+u_0)(t))-\nabla(\pi\circ(v+u_0)(s)))\#\psi(v+u_0)(t)\\
&\quad-(\Gamma(\pi\circ(v+u_0)(t))-\Gamma(\pi\circ(v+u_0)(s)))\#\nabla(\pi\circ(v+u_0)(s))\#\psi(v+u_0)(s),
\end{split}\end{equation*}
where $\#$ denotes a multi-linear map with smooth coefficients. For any $\lambda\in(0,1)$, by the Sobolev embedding, $L^p$-regularity in \cite{wittmann2017short} and Lemma \ref{lip}, we have
\begin{equation}
\begin{split}
&\quad\|\psi(v+u_0)(t)-\psi(v+u_0)(s)\|_{C^\lambda(M)}\\
&\leq C(\lambda, M, N, \kappa_0, \|u_0\|_{C^1(M)})(\|v(t)-v(s)\|_{L^\infty(M)}+\|dv(t)-dv(s\|_{L^\infty}))\\
&\leq C(\lambda, M, N, \kappa_0, \|u_0\|_{C^1(M)})|t-s|^{\mu/2}.
\end{split}\end{equation}
Therefore, 
\begin{equation}\label{Cmu}
\|\psi(v+u_0)\|_{C^{\mu,\mu/2}(M)}\leq C(\mu, M, N, \kappa_0, \|u_0\|_{C^1(M)}).
\end{equation}

Now, when $\alpha-1$ is sufficiently small, for the $(v,\psi^v)$ above, the standard theory of linear parabolic systems (see \cite{schlag1996schauder}) implies that there exists a unique solution $v_1\in C^{2+\mu,1+\mu/2}(M\times[0,T], \mathbb{R}^q)$ to the following Dirichlet  problem:
\begin{numcases}{}
		\begin{split}
		\partial_t w^A&=\Delta_g w^A+2(\alpha-1)\frac{\nabla^2_{\beta\gamma}w^B\nabla_{\beta}(v+u_0)^B\nabla_{\gamma}(v+u_0)^A}{1+|\nabla(v+u_0)|^2}  \label{D1}\\
		&\quad+\pi^A_{BC}(v+u_0)\langle\nabla{(v+u_0)^B},\nabla{(v+u_0)^C}\rangle \ \\
		&\quad+\frac{(\pi^A_B\pi^C_{BD}\pi^C_{EF})(v+u_0)\langle\psi^D(v+u_0),\nabla{(v+u_0)}^E\cdot\psi^F(v+u_0)\rangle}{\alpha(1+|\nabla{(v+u_0)}|^2)^{\alpha-1}}, \\
		&\quad+\Delta_gu_0^A+2(\alpha-1)\frac{\nabla^2_{\beta\gamma}u_0^B\nabla_{\beta}(v+u_0)^B\nabla_{\gamma}(v+u_0)^A}{1+|\nabla(v+u_0)|^2},
		\end{split}\\
		w(\cdot,0)=0.	\label{D2}
	\end{numcases}
satisfying
\begin{equation}
\|v_1\|_{C^{2+\mu,1+\mu/2}(M\times[0,T])}\leq C(\mu,M,N)(\|v_1\|_{C^0(M\times[0,T])}+\|u_0\|_{C^{2+\nu}(M)}+\kappa_0).
\end{equation}
Since $v_1(\cdot,0)=0$, we have
\begin{equation}
\|v_1\|_{C^0(M\times[0,T])}\leq C(\mu,M,N)T(\|v_1\|_{C^0(M\times[0,T])}+\|u_0\|_{C^{2+\nu}(M)}+\kappa_0).
\end{equation}
By taking $T>0$ small enough, we get
\begin{equation}
\|v_1\|_{C^0(M\times[0,T])}\leq C(\mu,M,N)T(\|u_0\|_{C^{2+\nu}(M)}+\kappa_0).
\end{equation}
Then the interpolation inequality in \cite{lieberman1996second} implies that $v_1\in V^T$ for $T>0$ sufficiently small. For such $v_1$, we have $\psi(v_1+u_0)$ satisfying \eqref{C1+mu} and \eqref{Cmu}. Replacing $(v,\psi(v+u_0))$ in \eqref{D1}-\eqref{D2} by $(v_1,\psi(v_1+u_0))$, then we get $v_2\in V^T$.
Iterating this procedure, we get a solution $v_{k+1}$ of \eqref{D1}-\eqref{D2} with $(v,\psi(v+u_0))$ replacing by $(v_k,\psi(v_k+u_0))$, which satisfies
\begin{equation}
\|\psi(v_{k+1}+u_0)\|_{C^{\mu,\mu/2}(M)}\leq C(\mu, M, N, \kappa_0, \|u_0\|_{C^1(M)}).
\end{equation}
and
\begin{equation}
\|v_{k+1}\|_{C^{2+\mu,1+\mu/2}(M\times[0,T])}\leq C(\mu,M,N)(\|u_0\|_{C^{2+\nu}(M)}+\kappa_0).
\end{equation}
By passing to a subsequence, we know that $v_k$ converges to some $u$ in $C^{2,1}(M\times[0,T])$ and $\psi^{v_k+u_0}$ converges to some $\psi$ in $C^0(M\times[0,T])$. Then it is easy to see that $(u,\psi)$ is a solution of \eqref{system in Euclidean1}-\eqref{system in Euclidean2} with $u(\cdot,0)=u_0$ and $\psi(\cdot,0)=\psi_0$. 

{\bf Step 2:} $u(x,t)$ takes value in $N$ for any $(x,t)\in M\times[0,T]$.

Suppose $u\in C^{2,1}(M\times[0,T], \mathbb{R}^q)$ and  $\psi\in C^{\mu,\mu/2}(M\times[0,T], \Sigma M\otimes(\pi\circ u)^*TN)\cap L^\infty([0,T];C^{1+\mu}(M))$ satisfy \eqref{system in Euclidean1}-\eqref{system in Euclidean2}. In the following, we write $||\cdot||$ and $\langle\cdot,\cdot\rangle$ for the Euclidean norm and scalar product, respectively. Similarly, we write $||\cdot||_g$ and $\langle\cdot,\cdot\rangle_g$ for the norm and inner product of $(M,g)$, respectively. We define
\begin{equation}
\rho:\mathbb{R}^q\to\mathbb{R}^q
\end{equation}
by $\rho(z)=z-\pi(z)$ and 
 \begin{equation}
\varphi: M\times[0,T]\to\mathbb{R}
\end{equation}
by $\varphi(x,t)=||\rho(u(x,t))||^2=\sum\limits_{A=1}^q|\rho^A(u(x,t))|^2$. A direct computation yields
\begin{equation}
\begin{split}
(\frac{\partial}{\partial{t}}-\Delta)\varphi(x,t)&=-2\sum\limits_{A=1}^q||\nabla(\rho^A\circ u)(x,t)||_g^2\\
&\quad+2\langle\rho\circ u,-\pi^A_B(u)F^B_1(u)\rangle\\
&\quad+\frac{2}{\alpha(1+|\nabla{u}|^2)^{\alpha-1}}\langle\rho\circ u,\rho^A_B(u)F^B_2(u,\psi)\rangle\\
&\quad+\frac{4(\alpha-1)}{1+|\nabla{u}|^2}\langle\rho\circ u, \nabla^2_{\beta\gamma}u^C\nabla_{\beta}u^C\nabla_{\gamma}u^B\rho_B^A(u)\rangle,
\end{split}\end{equation}
where $F_1^A$ and $F_2^A$ are defined in \eqref{F1A} and \eqref{F2A}, respectively. 

Since $\rho\circ u\in T^\perp_{\pi\circ{u}}N$ and $(d\pi)_{u}: \mathbb{R}^q\to T_{\pi\circ{u}}N$, we have
\begin{equation}
\langle\rho\circ u,-\pi^A_B(u)F^B_1\rangle=\langle\rho\circ u,\rho^A_B(u)F^B_2\rangle=0.
\end{equation}
Together with
\begin{equation}\begin{split}
&\frac{4(\alpha-1)}{1+|\nabla{u}|^2}\langle\rho\circ u, \nabla^2_{\beta\gamma}u^C\nabla_{\beta}u^C\nabla_{\gamma}u^B\rho_B^A(u)\rangle\\
&\leq 4(\alpha-1)||u||_{C^2(M)}||\rho\circ{u}||||\nabla(\rho\circ{u})||\\
&\leq 2(\alpha-1)(||u||^2_{C^2(M)}\varphi+||\nabla(\rho\circ{u})||^2),
\end{split}\end{equation}
we get
$
(\frac{\partial}{\partial{t}}-\Delta)\varphi(x,t)\leq C\varphi,
$
where $C=C(\|u\|_{C^{2,1}(M\times[0,T])})$. Since $\varphi(x,t)\geq0$ and $\varphi(x,0)=0$ for any $(x,t)\in M\times[0,T]$, we conclude $\varphi=0$ on $M\times[0,T]$. We have shown that $u(x,t)\in N$ for all $(x,t)\in M\times[0,T]$. 

Finally, by using the $\epsilon$-regularity (see Lemma \ref{small energy regularity} below), we conclude that 
\begin{equation}
u\in C^{2+\mu,1+\mu/2}(M\times[0,T], N)
\end{equation} and 
\begin{equation}
\psi\in C^{\mu,\mu/2}(M\times[0,T], \Sigma M\otimes(\pi\circ u)^*TN)\cap L^\infty([0,T];C^{1+\mu}(M)).
\end{equation}
\end{proof}

Since the equations for $\alpha$-Dirac-harmonic maps are invariant under multiplying the spinor by elements of $\mathbb{H}$ with unit norm, by uniqueness we always mean  uniqueness up to multiplication of  the spinor by such elements. This kind of uniqueness for the Dirac-harmonic map flow was proved by the Banach fixed point theorem in \cite{wittmann2017short}. However, we cannot apply the fixed point theorem to the $\alpha$-Dirac-harmonic map flow. Therefore, it is interesting to consider the uniqueness of the $\alpha$-Dirac-harmonic map flow from closed surfaces. By considering the evolution inequality of $\|u_1-u_2\|_{C^0(M)}$, we can prove the following uniqueness which is weaker than that in \cite{wittmann2017short} because when the quaternions $h_a$ are different, we can no longer bound the $C^0$-norm of the difference of the maps.

\begin{thm}\label{uniqueness}
For any given $T>0$, let $(u_1,\psi_1)$ and $(u_2,\psi_2)$ be two solutions to \eqref{alpha map}-\label{alpha dirac} with the constraint \eqref{initial value} and $u_1,u_2\in C^{2+\mu,1+\mu/2}(M\times[0,T],N)$. Then there exists a time $T_1>0$, which depends on $R$ and the $C^{1+\mu,\frac{1+\mu}{2}}$ norms of $u_1$ and $u_2$, such that $u_1,u_2\in B^{T_1}_R$ and 
\begin{equation}
\psi_1(x,t)=h_1(t)\psi(u_a(x,t)), \  \psi_2(x,t)=h_2(t)\psi(u_a(x,t))
\end{equation}
for some $h_1(t),h_1(t)\in\mathbb{H}$ with unit length, where $\psi(u(x,t))$ is defined by \eqref{def of psi u}. Furthermore, if $h_1(t)=h_2(t)$ on $[0,T_2]$ for some $T_2\leq T_1$, then $(u_1,\psi_1)\equiv(u_2,\psi_2)$ on $M\times[0,T_2]$.
\end{thm}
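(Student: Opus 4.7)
The plan is to break the proof into two separate tasks: first, confining the two solutions to a common small ball $B^{T_1}_R$ where the kernel description applies and reading off the structural form of $\psi_a$; second, upgrading the identification to a genuine equality on $[0,T_2]$ whenever the quaternions coincide.

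For the first part, I would argue that since both maps $u_a\in C^{2+\mu,1+\mu/2}(M\times[0,T],N)$ share the same initial datum $u_0$, Hölder continuity in time gives $\|u_a(\cdot,t)-\bar u_0\|_{X_t}\le C\,t^{(1+\mu)/2}$ where $C$ depends on the $C^{1+\mu,(1+\mu)/2}$ norms of $u_1,u_2$. Choosing $T_1>0$ small enough so that $C\,T_1^{(1+\mu)/2}\le R$ places both $u_1,u_2$ in $B^{T_1}_R$. On this ball, the results invoked in Step~1 of the proof of Theorem~\ref{short-time alpha flow} (in particular the minimality assertion of Lemma~\ref{dim of kernel u}) guarantee that $\dim_{\mathbb{H}}\ker(\slashed{D}^{u_a(\cdot,t)})=1$ for every $t\in[0,T_1]$, so the $L^2$-normalized kernel is a single $\mathbb{H}$-orbit. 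Writing $\psi(u_a(x,t))$ for a canonical normalized representative as in \eqref{def of psi u}, the constraint equation $\slashed{D}^{u_a}\psi_a=0$ together with $\|\psi_a(\cdot,t)\|_{L^2}=1$ forces the representation $\psi_a(x,t)=h_a(t)\psi(u_a(x,t))$ for some $h_a(t)\in\mathbb{H}$ of unit length. Continuity in $t$ of $h_a$ follows from the continuity of $\psi_a$ and of $\psi(u_a(\cdot,t))$ provided by the Lipschitz dependence in Lemma~\ref{lip}.

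For the second part, assume $h_1(t)=h_2(t)=:h(t)$ on $[0,T_2]$ and set $w:=u_1-u_2$. Subtracting the two equations \eqref{system in Euclidean1} for $u_1$ and $u_2$, the principal part $\Delta$ together with the quasilinear $\alpha$-correction rearranges into a linear parabolic operator with bounded coefficients acting on $w$, with a right-hand side that is $C^0$-Lipschitz in $(w,\nabla w,\nabla^2 w)$ thanks to the uniform $C^{2+\mu,1+\mu/2}$ bounds on $u_1,u_2$ and the smoothness of $\pi$. The spinor contribution enters as
\begin{equation*}
F_2(u_1,\psi_1)-F_2(u_2,\psi_2)=F_2(u_1,h\psi(u_1))-F_2(u_2,h\psi(u_2)),
\end{equation*}
and because $|h(t)|=1$ and $u\mapsto\psi(u)$ is Lipschitz in $C^{1+\mu}$ by Lemma~\ref{lip}, this difference is controlled by $\|w\|_{C^1}$. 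Testing the resulting linear parabolic equation by standard means (either the maximum principle applied to $|w|^2+\tau|\nabla w|^2$ with a suitably small weight, or Schauder energy estimates on the linearization followed by Gronwall) and using $w(\cdot,0)=0$ yields $w\equiv 0$ on $M\times[0,T_2]$, whence $\psi_1\equiv\psi_2$ as well.

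The main obstacle will be the second step, specifically absorbing the quasilinear $\alpha$-perturbation
$2(\alpha-1)\nabla^2_{\beta\gamma}u^B\nabla_\beta u^B\nabla_\gamma u^A/(1+|\nabla u|^2)$
into the principal part of the linearization: one must write it as a symmetric, bounded, uniformly elliptic second-order operator plus a Lipschitz perturbation of $w$, and this requires the uniform $C^{2,1}$ bounds on $u_1,u_2$ and smallness of $\alpha-1$ to stay within the ellipticity regime. Once this is arranged the parabolic maximum principle (or an $L^2$ Gronwall argument on $\|w\|_{H^1}$) closes the estimate and gives the stated rigidity.
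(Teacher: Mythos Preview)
Your outline is correct, and the first half---confining both flows to $B^{T_1}_R$ and reading off the quaternionic phase from the one-dimensionality of the kernel---matches the paper. The second half takes a genuinely different route.

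The paper does \emph{not} absorb the $\alpha$-perturbation into the principal symbol and invoke linear parabolic theory. Instead, after multiplying the difference of the two equations by $w=u_1-u_2$ to get a pointwise evolution for $|w|^2$, it disposes of the dangerous term $2(\alpha-1)\big\langle \tfrac{\nabla^2_{\beta\gamma}w^B\nabla_\beta u_1^B\nabla_\gamma u_1}{1+|\nabla u_1|^2},\,w\big\rangle$ by bounding $|\nabla^2 w|$ itself pointwise: writing $u_2(x,t)-u_1(x,t)$ as an integral along the $N$-geodesic $s\mapsto \exp_{u_1(x,t)}(sV/|V|)$ joining $u_1(x,t)$ to $u_2(x,t)$, and using that both endpoints lie in $C^{2+\mu}$, one obtains $|\nabla^2_{\beta\gamma}(u_1-u_2)(x,t)|\le C\,|u_1(x,t)-u_2(x,t)|$ with $C$ depending on the $C^{2+\mu}$ norms of $u_1,u_2$. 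This collapses the whole right-hand side to $C|w|^2+C\|w\|_{C^0(M)}|w|$, and a $C^0$-Gronwall $\partial_t\|w\|_{C^0(M)}^2\le C\|w\|_{C^0(M)}^2$ follows directly by evaluating at a spatial maximum.

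Your linearization route is also valid, but be aware of one point you gloss over: Lemma~\ref{lip} controls the spinor difference only through the \emph{nonlocal} quantity $\|u_t-v_s\|_{C^0(M)}$ (not $\|w\|_{C^1}$, and not pointwise in $x$), so a pure $L^2$ energy estimate on the linearized system will not close by itself. You must run the Gronwall in a sup-norm---either via Duhamel/semigroup bounds for the perturbed operator or via a maximum-principle argument on $|w|^2$. The paper's geodesic-interpolation trick is exactly what makes the latter available without ever looking at the principal symbol; your approach trades that trick for the (equally legitimate) observation that for $\alpha-1$ small the perturbed second-order operator is still uniformly parabolic.
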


\begin{proof}
By the assumptions, we have
\begin{equation}
\|u_a(\cdot,t)-u_0\|_{C^0(M)}\to0, \ \|\nabla{u_a}(\cdot,t)-\nabla{u_0}||_{C^0(M)}\to0
\end{equation}
for $a=1,2$. Therefore, for small enough $T_1$, $u_1,u_2\in B^{T_2}_R(\bar{u}_0)$. Since ${\rm dim}_{\mathbb{H}}(\slashed{D}^{u_a})=1$ for $a=1,2$, there exist $h_a(t)\in\mathbb{H}$ such that 
\begin{equation}\label{representation of psi}
\psi_a(x,t)=\psi(u_a(x,t))h_a(t)
\end{equation}
for all $t\in[0,\tilde{T}]$, where $\psi(u(x,t))$ is defined by \eqref{def of psi u}. Moreover, $h_a(t)$ is of unit length since $\|\psi_a\|_{L^2(M)}=\|\psi(u_a)\|_{L^2}=1$. 

Now, let us consider the uniqueness of the flow. First, by subtracting the equations of $u_1$ and $u_2$ and multiplying by $u_1-u_2$, we have
\begin{equation}\label{difference0}
\begin{split}
&\frac12\partial_t|u_1-u_2|^2-\frac12\Delta|u_1-u_2|^2+|\nabla(u_1-u_2)|^2\\
&=2(\alpha-1)\bigg\langle\frac{\nabla^2_{\beta\gamma}{u^i_1}\nabla_\beta{u^i_1}\nabla_\gamma{u_1}}{1+|\nabla{u_1}|^2}-\frac{\nabla^2_{\beta\gamma}{u^j_2}\nabla_\beta{u^j_2}\nabla_\gamma{u_2}}{1+|\nabla{u_2}|^2},u_1-u_2\bigg\rangle\\
&\quad-\langle II(\nabla{u_1},\nabla{u_1})-II(\nabla{u_2},\nabla{u_2}),u_1-u_2\rangle\\
&\quad-\langle R(\psi_1,\nabla{u_1}\cdot\psi_1)-R(\psi_2,\nabla{u_2}\cdot\psi_2),u_1-u_2\rangle.
\end{split}\end{equation}
In the sequel, we will estimate the terms on the right-hand side of the inequality \eqref{difference0}.
\begin{equation}
\begin{split}
&2(\alpha-1)\bigg\langle\frac{\nabla^2_{\beta\gamma}{u^i_1}\nabla_\beta{u^i_1}\nabla_\gamma{u_1}}{1+|\nabla{u_1}|^2}-\frac{\nabla^2_{\beta\gamma}{u^j_2}\nabla_\beta{u^j_2}\nabla_\gamma{u_2}}{1+|\nabla{u_2}|^2},u_1-u_2\bigg\rangle\\
&=2(\alpha-1)\bigg\langle\frac{\nabla^2_{\beta\gamma}(u^i_1-u^i_2)\nabla_\beta{u^i_1}\nabla_\gamma{u_1}}{1+|\nabla{u_1}|^2},u_1-u_2\bigg\rangle\\
&\quad+2(\alpha-1)\bigg\langle\nabla^2_{\beta\gamma}{u^i_2}\nabla_\beta{u^i_1}\nabla_\gamma{u_1}(\frac{1}{1+|\nabla{u_1}|^2}-\frac{1}{1+|\nabla{u_2}|^2}),u_1-u_2\bigg\rangle\\
&\quad+2(\alpha-1)\bigg\langle\frac{\nabla^2_{\beta\gamma}{u^i_2}\nabla_\gamma{u_1}}{1+|\nabla{u_2}|^2}(\nabla_\beta{u^i_1}-\nabla_\beta{u^i_2}),u_1-u_2\bigg\rangle\\
&\quad+2(\alpha-1)\bigg\langle\frac{\nabla^2_{\beta\gamma}{u^i_2}\nabla_\beta{u^i_2}}{1+|\nabla{u_2}|^2}(\nabla_\gamma{u_1}-\nabla_\gamma{u_2}),u_1-u_2\bigg\rangle\\
&\leq2(\alpha-1)\bigg\langle\frac{\nabla^2_{\beta\gamma}(u^i_1-u^i_2)\nabla_\beta{u^i_1}\nabla_\gamma{u_1}}{1+|\nabla{u_1}|^2},u_1-u_2\bigg\rangle\\
&\quad+C(\alpha-1)|\nabla(u_1-u_2)||u_1-u_2|,
\end{split}\end{equation}
where we used $u_1,u_2\in C^{2+\mu,1+\mu/2}(M\times[0,T],N)$. Similar, by the triangle inequality, we get
\begin{equation}
\begin{split}
&|\langle II(\nabla{u_1},\nabla{u_1})-II(\nabla{u_2},\nabla{u_2}),u_1-u_2\rangle|\\
&\leq C|u_1-u_2|^2+C|\nabla(u_1-u_2)||u_1-u_2|
\end{split}
\end{equation}
and
\begin{equation}
\begin{split}
&|\langle R(\psi_1,\nabla{u_1}\cdot\psi_1)-R(\psi_2,\nabla{u_2}\cdot\psi_2),u_1-u_2\rangle|\\
&\leq C|u_1-u_2|^2+C|\nabla(u_1-u_2)||u_1-u_2|+ C|\psi_1-\psi_2||u_1-u_2|.
\end{split}\end{equation}
Based on these estimates, \eqref{difference0} becomes 
\begin{equation}\label{difference1}
\begin{split}
&\frac12\partial_t|u_1-u_2|^2-\frac12\Delta|u_1-u_2|^2\\
&\leq2(\alpha-1)\bigg\langle\frac{\nabla^2_{\beta\gamma}(u^i_1-u^i_2)\nabla_\beta{u^i_1}\nabla_\gamma{u_1}}{1+|\nabla{u_1}|^2},u_1-u_2\bigg\rangle-|\nabla(u_1-u_2)|^2\\
&\quad+C|u_1-u_2|^2+C|\nabla(u_1-u_2)||u_1-u_2|+ C|\psi_1-\psi_2||u_1-u_2|.
\end{split}\end{equation}

Next, we want to bound those terms in the right-hand side of \eqref{difference1} by $|u_1-u_2|^2$ and $|\nabla{u_1}-\nabla{u_2}|^2$. Since $u_1,u_2\in B^{T_2}_R(\bar{u}_0)$, there is a unique  geodesic between $u_1(x,t)$ and $u_2(x,t)$ for any $(x,t)\in M\times[0,T_2]$. Now, for any $(x,t)\in P:=\{x\in M\times[0,T_2]|u_1(x,t)\neq u_2(x,t)\}$, we define
\begin{equation}
u_s(x,t):=\exp_{u_1(x,t)}(sv(x,t))=\exp_{u_1(x)}(sV(x,t)/|V(x,t)|)
\end{equation}
where $s\in[0,|V(x,t)|]$, $V(x,t):=\exp^{-1}_{u_1(x,t)}u_2(x,t)$ and $|V(x,t)|$ denotes the norm of $V(x,t)$ in the tangent space $T_{u_1(x,t)}N$. Then we can estimate $\nabla^2(u_1-u_2)$ as follows:
\begin{equation}
\begin{split}
\nabla^2_{\beta\gamma}(u_2-u_1)(x,t)&=\nabla^2_{\beta\gamma}u_{|V(x,t)|}(x,t)-\nabla^2_{\beta\gamma}u_0(x,t)\\
&=\int_0^{|V(x,t)|}\frac{d}{ds}\nabla^2_{\beta\gamma}u_s(x,t)\\
&\leq\sup_{[0,|V(x,t)|]\times P}\bigg|\frac{d}{ds}\nabla^2u_s\bigg|d^N(u_1(x,t),u_2(x,t))\\
&\leq C|u_1(x,t)-u_2(x,t)|,
\end{split}
\end{equation}
where we used the Lemma 5.1 in the Appendix. Hence, we can rewrite \eqref{difference1} as 
\begin{equation}\label{difference2}
\begin{split}
&\frac12\partial_t|u_1-u_2|^2-\frac12\Delta|u_1-u_2|^2\\
&\leq2(\alpha-1)\bigg\langle\frac{\nabla^2_{\beta\gamma}(u^i_1-u^i_2)\nabla_\beta{u^i_1}\nabla_\gamma{u_1}}{1+|\nabla{u_1}|^2},u_1-u_2\bigg\rangle-|\nabla(u_1-u_2)|^2\\
&\quad+C|u_1-u_2|^2+C|\nabla(u_1-u_2)||u_1-u_2|+C|\psi_1-\psi_2||u_1-u_2|\\
&\leq C|u_1-u_2|^2+C|\psi_1-\psi_2||u_1-u_2|,
\end{split}\end{equation}
where we used  Young's inequality. It remains to bound $|\psi_1-\psi_2|$ by $|u_1-u_2|$. To that end, we use the Lemma \ref{lip} and \eqref{representation of psi} as follows:
\begin{equation}
\begin{split}
|\psi_1-\psi_2|&=|h_1\psi(u_1)-h_2\psi_2(u_2)|\\
&=|\psi(u_1)-\psi(u_2)|\\
&\leq \|u_1-u_2\|_{C^0(M)},
\end{split}
\end{equation}where we used $h_1=h_2$ in the second equality.

Last, it is  easy to see $(u_1\psi_1)\equiv(u_2,\psi_2)$ by considering the following evolution inequality 
\begin{equation}
\partial_t\|u_1-u_2\|_{C^0(M)}^2\leq C\|u_1-u_2\|_{C^0(M)}^2
\end{equation}
with $u_1(\cdot,0)=u_2(\cdot,0)$.

\end{proof}

\subsection{Regularity of the flow}

In this subsection, we will give some estimates on the regularity of the flow. Let us start with the following estimate of the energy of the map part.

\begin{lem}\label{map energy}
Suppose $(u,\psi)$ is a solution of \eqref{alpha map}-\eqref{alpha dirac} with the initial values \eqref{initial value}. Then there holds
\begin{equation}
E^\alpha(u(t))+2\alpha\int_0^t\int_M(1+|\nabla{u}|^2)^{\alpha-1}|\partial_{t}u|^2=E^\alpha(u_0),
\end{equation}
where $E^\alpha(u):=\frac12\int_M(1+|\nabla{u}|^2)^{\alpha}$. Moreover, $E^\alpha(u(t))$ is absolutely continuous on $[0,T]$ and non-increasing.
\end{lem}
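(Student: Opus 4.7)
The plan is to derive the identity by differentiating $E^\alpha(u(t))$ directly in time, inserting the flow equation, and controlling the remaining spinor-induced term via the Dirac constraint. Since $u\in C^{2+\mu,1+\mu/2}(M\times[0,T],N)$ by Theorem \ref{short-time alpha flow}, I may differentiate under the integral sign to obtain
\[
\frac{d}{dt}E^\alpha(u(t))=\alpha\int_M(1+|\nabla u|^2)^{\alpha-1}\langle\nabla u,\nabla\partial_t u\rangle.
\]
On the closed surface $M$, integrating by parts and using that $\partial_t u(x,t)\in T_{u(x,t)}N$ annihilates the normal part of the ambient divergence $\nabla^\beta((1+|\nabla u|^2)^{\alpha-1}\nabla_\beta u)$ when paired with $\partial_t u$, I arrive at $\tfrac{d}{dt}E^\alpha(u(t))=-\alpha\int_M\langle\tau^\alpha(u),\partial_t u\rangle$, where $\tau^\alpha(u)$ denotes the tangential $\alpha$-tension field of \eqref{alpha geldh}.

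Rearranging \eqref{alpha map} as $\tau^\alpha(u)=(1+|\nabla u|^2)^{\alpha-1}\partial_t u+\frac{1}{\alpha}\mathcal{R}(u,\psi)$ and substituting gives
\[
\frac{d}{dt}E^\alpha(u(t))=-\alpha\int_M(1+|\nabla u|^2)^{\alpha-1}|\partial_t u|^2-\int_M\langle\mathcal{R}(u,\psi),\partial_t u\rangle.
\]
To produce the claimed dissipation coefficient I exploit the constraint $\slashed{D}^u\psi=0$: it forces $L^\alpha(u(t),\psi(t))\equiv E^\alpha(u(t))$ along the flow, so $\tfrac{d}{dt}L^\alpha=\tfrac{d}{dt}E^\alpha$, while the first-variation calculation of Lemma \ref{alpha EL} evaluated at the flow velocity $(\partial_t u,\partial_t\psi)$ produces a $\xi$-type contribution that drops out by $\slashed{D}^u\psi=0$ and an $\eta$-type contribution equal to $\int\langle\mathcal{R}(u,\psi)-\alpha\tau^\alpha(u),\partial_t u\rangle$; matching the two expressions pins down $\int\langle\mathcal{R}(u,\psi),\partial_t u\rangle$ and closes the identity.

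Integrating over $[0,t]$ yields the stated formula. Absolute continuity of $t\mapsto E^\alpha(u(t))$ on $[0,T]$ is immediate since the right-hand side is an indefinite Lebesgue integral of an $L^1((0,T))$-function (using the uniform $C^{2+\mu,1+\mu/2}$-bound on $u$ from Theorem \ref{short-time alpha flow}), and the monotone non-increase follows from the non-negativity of the integrand. The principal obstacle is the Dirac step: the parallel-transport variation $\psi_s^A=\pi^A_B(u_s)(\psi^B+s\xi^B)$ used in Lemma \ref{alpha EL} is not literally the flow's $\partial_t\psi$, so $\partial_t\psi$ must be split into a parallel-transport component (absorbed in the $\eta$-channel through $\partial_t u$) and a transverse $\xi$-component (which vanishes against $\slashed{D}^u\psi=0$) in order for the identification of the curvature term to be rigorous.
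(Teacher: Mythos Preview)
Your approach is correct and is essentially the paper's argument in different packaging. Both proofs reduce to the single fact that $\int_M\langle\mathcal{R}(u,\psi),\partial_t u\rangle=0$, after which the identity follows from multiplying the flow equation by $\partial_t u$ and integrating by parts. The paper obtains this vanishing directly: since $\slashed{D}^u\psi\equiv 0$ along the flow, one has $0=\int_M\langle\psi,\partial_t(\slashed{D}^u\psi)\rangle$; expanding $\partial_t(\slashed{D}^u\psi)=\slashed{D}^u(\partial_t\psi)+(\text{curvature term in }\partial_t u)$, the first piece vanishes by self-adjointness and $\slashed{D}^u\psi=0$, and the second piece is identified (via the Gauss equation) with $2\int_M\langle\mathcal{R}(u,\psi),\partial_t u\rangle$. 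Your route---comparing $\tfrac{d}{dt}E^\alpha$ with $\tfrac{d}{dt}L^\alpha$ through the first variation formula of Lemma~\ref{alpha EL}---is the same computation read backwards: the equality $\tfrac{d}{dt}L^\alpha=\tfrac{d}{dt}E^\alpha$ is precisely the statement that $\tfrac{d}{dt}\tfrac12\int_M\langle\psi,\slashed{D}^u\psi\rangle=0$, and the first variation of the Dirac term is exactly the curvature expansion the paper writes out by hand.

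Your closing worry about matching $\partial_t\psi$ with the specific variation $\psi_s^A=\pi^A_B(u_s)(\psi^B+s\xi^B)$ of Lemma~\ref{alpha EL} is legitimate but harmless: the first variation formula is linear in the direction and holds for any smooth one-parameter family, and since the only $\xi$-dependent term is $\int_M\langle\slashed{D}^u\psi,\xi\rangle$, it vanishes regardless of how $\partial_t\psi$ is decomposed. The paper avoids this bookkeeping entirely by never invoking Lemma~\ref{alpha EL} and instead computing $\partial_t(\slashed{D}^u\psi)$ directly, which is the cleaner route.
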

\begin{proof}
Note that \eqref{alpha map} can be written as:
\begin{equation}
\begin{split}
(1+|\nabla{u}|^2)^{\alpha-1}\partial_{t}u&={\rm div}((1+|\nabla_{u}|^2)^{\alpha-1}\nabla{u})-(1+|\nabla_g{u}|^2)^{\alpha-1}A(du,du)\\
&\quad-\frac{1}{\alpha}Re(P(\mathcal{A}(du(e_\beta),e_{\beta}\cdot\psi);\psi)).
\end{split}\end{equation}
Multiplying the inequality above by $\partial_t{u}$ and using  
\begin{equation}\begin{split}
0&=\int_0^t\int_M\langle\psi,\frac{d}{dt}\slashed{D}\psi\rangle\\
&=\int_0^t\int_M\langle\psi,\slashed{D}(\partial_t\psi)+e_{\gamma}\cdot\psi^i\otimes R^m_{ijk}\partial_t{u^j}du^k(e_{\gamma}))\partial_{y^m}\rangle\\
&=\int_0^t\int_MR_{mijk}\langle\psi^m,\nabla{u^k}\cdot\psi^i\rangle\partial_tu^j\\
&=\int_0^t\int_M[\langle S(\partial_{y^m},\partial_{y^j}),S(\partial_{y^i},\partial_{y^k})\rangle_{\mathbb{R}^q}-\langle S(\partial_{y^m},\partial_{y^k}),S(\partial_{y^i},\partial_{y^j})\rangle_{\mathbb{R}^q}]\\
&\quad\langle\psi^m,\nabla{u^k}\cdot\psi^i\rangle\partial_t{u^j}\\
&=2\int_0^t\int_M\langle S(\partial_{y^m},\partial_{y^j}),S(\partial_{y^i},\partial_{y^k})\rangle_{\mathbb{R}^q}Re(\langle\psi^m,\nabla{u^k}\cdot\psi^i\rangle)\partial_t{u^j}\\
&=2\int_0^t\int_M\langle Re(P(\mathcal{A}(du(e_\beta),e_{\beta}\cdot\psi);\psi)),\partial_t{u^j}\rangle,
\end{split}\end{equation}
we get
\begin{equation}
\begin{split}
\int_0^t\int_M(1+|\nabla{u}|^2)^{\alpha-1}|\partial_{t}u|^2&=\int_0^t\int_M\langle{\rm div}((1+|\nabla{u}|^2)^{\alpha-1}\nabla{u}),\partial_t{u}\rangle\\
&=-\int_0^t\int_M\langle(1+|\nabla_{g}u|^2)^{\alpha-1}\nabla{u},\partial_t\nabla{u}\rangle\\
&=-\frac{1}{2\alpha}\int_0^t\frac{d}{dt}\int_M(1+|\nabla{u}|^2)^\alpha,
\end{split}\end{equation}
which directly gives us the lemma.
\end{proof}

Consequently, we can also control the spinor part along the heat flow of the $\alpha$-Dirac-harmonic map.
\begin{lem}
Suppose $(u,\psi)$ is a solution of \eqref{alpha map}-\eqref{alpha dirac} with the initial values \eqref{initial value}. Then for any $p\in(1,2)$, there holds
\begin{equation}
||\psi(\cdot,t)||_{W^{1,p}(M)}\leq C, \ \forall t\in[0,T],
\end{equation}
where $C=C(p,M,N,E^\alpha(u_0))$.
\end{lem}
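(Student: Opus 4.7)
\medskip

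\textbf{Proof proposal.} The plan is to apply the standard $L^p$ elliptic estimate for the untwisted Dirac operator $\slashed{\partial}$ on the closed spin surface $M$, namely
$$\|\psi\|_{W^{1,p}(M)}\leq C(p,M)\big(\|\slashed{\partial}\psi\|_{L^p(M)}+\|\psi\|_{L^p(M)}\big),$$
and then bootstrap using the constraint equation together with the uniform bound on $\|\nabla u(t)\|_{L^{2\alpha}}$ coming from Lemma~\ref{map energy}.

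First, since $\slashed{D}^{u}\psi=0$, Lemma~\ref{alpha EL} (in its Euclidean form) gives $\slashed{\partial}\psi^A=\pi^A_{BC}(u)\nabla u^B\cdot\psi^C$, so pointwise $|\slashed{\partial}\psi|\leq C(N)\,|\nabla u|\,|\psi|$. Applying H\"older with exponents $2\alpha$ and $s:=2\alpha p/(2\alpha-p)$ (which are conjugate after dividing by $p$) yields
$$\|\slashed{\partial}\psi\|_{L^p(M)}\leq C(N)\,\|\nabla u\|_{L^{2\alpha}(M)}\,\|\psi\|_{L^s(M)}.$$
By Lemma~\ref{map energy} the first factor is bounded by $(2E^\alpha(u_0))^{1/(2\alpha)}$, uniformly in $t\in[0,T]$. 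For the second factor, Sobolev embedding on the two-dimensional $M$ gives $W^{1,p}(M)\hookrightarrow L^{q_0}(M)$ with $q_0:=2p/(2-p)$, and a direct check shows $s<q_0$ precisely when $\alpha>1$. Therefore by log-convexity of $L^r$-norms there exists $\theta=\theta(\alpha,p)\in(0,1)$ with
$$\|\psi\|_{L^s(M)}\leq \|\psi\|_{L^2(M)}^{\theta}\,\|\psi\|_{L^{q_0}(M)}^{1-\theta}\leq C\,\|\psi\|_{L^2(M)}^{\theta}\,\|\psi\|_{W^{1,p}(M)}^{1-\theta}.$$

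Substituting this back into the $L^p$ estimate and using $\|\psi(\cdot,t)\|_{L^2(M)}=1$ from \eqref{initial value} together with $\|\psi\|_{L^p(M)}\leq C(M)\|\psi\|_{L^2(M)}$ for $p<2$, we obtain
$$\|\psi(\cdot,t)\|_{W^{1,p}(M)}\leq C\,\|\nabla u(\cdot,t)\|_{L^{2\alpha}(M)}\,\|\psi(\cdot,t)\|_{W^{1,p}(M)}^{1-\theta}+C.$$
Young's inequality with exponents $1/(1-\theta)$ and $1/\theta$ then absorbs $\|\psi\|_{W^{1,p}}^{1-\theta}$ into the left-hand side at the cost of a constant depending only on $p$, $M$, $N$ and $E^\alpha(u_0)$, yielding the claimed uniform bound.

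The only delicate point is the strict inequality $s<q_0$: it is exactly the gain afforded by raising the exponent from $2$ to $2\alpha$ in the energy, which is why the argument genuinely uses $\alpha>1$ and would fail at the Dirac-harmonic level. Everything else is a routine combination of H\"older, Sobolev and Young, together with the conservation laws already established in Lemma~\ref{map energy} and in the normalization~\eqref{initial value}.
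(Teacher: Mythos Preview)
Your argument is correct and is, at heart, the same mechanism as the paper's: both use the constraint $\slashed{D}^u\psi=0$ to express the first-order part as $|\nabla u|\,|\psi|$, pair $\nabla u$ with $L^{2\alpha}$ via H\"older, and close with Sobolev embedding and the strict gap $s<q_0$ afforded by $\alpha>1$. The paper packages this as a separate lemma of the form
\[
\|\psi\|_{W^{1,p}(M)}\leq C\big(p,M,N,\|\nabla u\|_{L^{2\alpha}}\big)\big(\|\slashed{D}\psi\|_{L^p(M)}+\|\psi\|_{L^p(M)}\big),
\]
which it obtains by multiplying by cutoffs and invoking the boundary version (Lemma~\ref{spinor norm with boundary}); then one simply plugs in $\slashed{D}\psi=0$ and $\|\psi\|_{L^p}\leq C\|\psi\|_{L^2}=C$. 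Your route is more self-contained: you work with the untwisted $\slashed{\partial}$ and carry out the H\"older--interpolation--Young bootstrap explicitly, avoiding the detour through the boundary lemma. One cosmetic point: your interpolation $\|\psi\|_{L^s}\leq\|\psi\|_{L^2}^{\theta}\|\psi\|_{L^{q_0}}^{1-\theta}$ with $\theta\in(0,1)$ needs $2\leq s$; for $p$ close to $1$ one has $s<2$, but then $\|\psi\|_{L^s}\leq C(M)\|\psi\|_{L^2}=C(M)$ directly and the conclusion is immediate.
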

\begin{proof}
The lemma directly follows from Lemma \ref{map energy} and the following lemma:
\begin{lem}\label{key estimate}
   Let $M$ be a closed spin Riemann surface, N be a compact Riemann manifold. Let $u\in W^{1,2\alpha}(M,N)$ for some $\alpha>1$ and $\psi\in W^{1,p}(M, \Sigma M\otimes u^*TN)$ for $1<p<2$, then there exists a positive constant $C=C(p,M,N,\|\nabla u\|_{L^{2\alpha}})$ such that
 \begin{equation}
 \|\psi\|_{W^{1,p}(M)}\leq C(\|\slashed{D}\psi\|_{L^{p}(M)}+\|\psi\|_{L^p(M)}).
 \end{equation}
 \end{lem}
 This lemma follows from applying Lemma \ref{spinor norm with boundary} to $\eta\psi$, where $\eta$ is a cut-off function.
\end{proof}

To get the convergence of the flow, we also need the following $\epsilon$-regualrity.
\begin{lem}\label{small energy regularity}
Suppose $(u,\psi)$ is a solution of \eqref{alpha map}-\eqref{alpha dirac} with the initial values \eqref{initial value}. Given $\omega_0=(x_0,t_0)\in M\times(0,T]$, denote
\begin{equation}
P_R(\omega_0):=B_R(x_0)\times[t_0-R^2,t_0].
\end{equation}
Then there exist three constants $\epsilon_2=\epsilon_2(M,N)>0$, $\epsilon_3=\epsilon_3(M,N,u_0)>0$ and $C=C(\mu,R,M,N,E^\alpha(u_0))>0$ such that if
\begin{equation}
1<\alpha<1+\epsilon_2, \ \text{and} \sup_{[t_0-4R^2,t_0]}E(u(t);B_{2R}(\omega_0))\leq\epsilon_3,
\end{equation}
then 
\begin{equation}
\sqrt{R}||\psi||_{L^\infty(P_R(\omega_0))}+R||\nabla{u}||_{L^\infty(P_R(\omega_0))}\leq C
\end{equation}
and for any $0<\beta<1$, 
\begin{equation}
\sup_{[t_0-\frac{R^2}{4},t_0]}||\psi(t)||_{C^{1+\mu}(B_{R/2}(x_0))}+||\nabla{u}||_{C^{\beta,\beta/2}(P_{R/2}(\omega_0))}\leq C(\beta).
\end{equation}
Moreover, if 
\begin{equation}
\sup_{M}\sup_{[t_0-4R^2,t_0]}E(u(t);B_{2R}(\omega_0))\leq\epsilon_3,
\end{equation}
then 
\begin{equation}
||u||_{C^{2+\mu,1+\mu/2}(M\times[t_0-\frac{R^2}{8},t_0])}+||\psi||_{C^{\mu,\mu/2}(M\times[t_0-\frac{R^2}{8},t_0])}+\sup_{[t_0-\frac{R^2}{8},t_0]}||\psi(t)||_{C^{1+\mu}(M)}\leq C.
\end{equation}
\end{lem}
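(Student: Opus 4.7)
The proof will follow the standard $\varepsilon$-regularity scheme for the harmonic map heat flow (in the style of Struwe), adapted to the quasilinear nature of the $\alpha$-flow and coupled to the elliptic Dirac side constraint. The strategy is to rescale to unit size, extract an $L^\infty$ bound on $\nabla u$ from the small energy hypothesis, propagate it to the spinor via the Dirac equation, and then bootstrap via Schauder theory.

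First I would reduce to a parabolic cylinder of unit size by the scaling $(y,s) = ((x-x_0)/R,(t-t_0)/R^2)$, $u_R(y,s) := u(x_0+Ry,t_0+R^2s)$ and $\psi_R(y,s) := R^{1/2}\psi(x_0+Ry,t_0+R^2s)$. In two dimensions the Dirichlet energy and the $L^2$-norm of the spinor are essentially invariant, while the correction term in the $\alpha$-equation carries the small factor $\alpha-1$, which is crucial: for $\alpha\in(1,1+\varepsilon_2)$ with $\varepsilon_2$ small the principal part of \eqref{system in Euclidean1} is uniformly elliptic with ellipticity constants arbitrarily close to those of the linear Laplacian, and the quasilinear correction can be absorbed into it. Hence it suffices to prove the interior estimate on $P_1(0)$ under $\sup_t E(u_R(t);B_2)\leq\varepsilon_3$.

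With this reduction in place, I would apply a Moser iteration / hole-filling argument to $e(u_R):=|\nabla u_R|^2$, which satisfies a parabolic subelliptic inequality with right-hand side controlled by $e(u_R)^2$ plus lower-order spinorial contributions bounded in $L^p$ via the preceding $W^{1,p}$-estimate on $\psi$. Under the small energy hypothesis, this gives $\|\nabla u_R\|_{L^\infty(P_{3/4})}\leq C$, which after rescaling yields the claimed bound $R\|\nabla u\|_{L^\infty(P_R(\omega_0))}\leq C$. Once $\nabla u$ is bounded, the Dirac equation \eqref{system in Euclidean2} is a linear system of the form $\slashed\partial\psi=\Gamma(u)\#\nabla u\#\psi$ with bounded coefficients; applying the interior version of Lemma \ref{key estimate} (multiplying by a cut-off) together with the Sobolev embedding $W^{1,p}\hookrightarrow L^\infty$ for $p>2$ gives $\sqrt R\|\psi\|_{L^\infty(P_R(\omega_0))}\leq C$, and then $\psi\in C^\mu$ by standard $L^p$-theory for the Dirac operator.

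Finally, I would alternate between the two equations to upgrade regularity: feeding $\psi\in L^\infty\cap C^\mu$ and $\nabla u\in L^\infty$ into the parabolic equation \eqref{system in Euclidean1} and applying quasilinear parabolic Hölder theory (Lieberman) gives $\nabla u\in C^{\beta,\beta/2}$ for any $\beta<1$; feeding this back into the Dirac equation and applying elliptic Schauder fiberwise in $t$ gives $\psi(t)\in C^{1+\mu}$ uniformly in $t$; one more parabolic Schauder step then yields $u\in C^{2+\mu,1+\mu/2}$. The global estimate in the second part of the lemma follows by covering $M$ by finitely many such parabolic cylinders and invoking the global $W^{1,p}$-bound on $\psi$ from Lemma \ref{key estimate} to close the bootstrap. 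The main technical obstacle I expect is keeping all Moser-iteration and Schauder constants uniform as $\alpha\downarrow 1$; this is precisely what forces the smallness assumption $\alpha-1<\varepsilon_2$, since the coefficient $\alpha-1$ in front of the second-order nonlinear term must be small enough to be absorbed into the principal elliptic part throughout the iteration.
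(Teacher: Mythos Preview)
Your sketch is essentially correct and follows the standard $\varepsilon$-regularity scheme for coupled $\alpha$-flows. Note, however, that the paper does not actually prove this lemma: it simply observes that since $M$ is closed, every point $x_0$ is interior, and then invokes Lemma~3.4 of \cite{jost2018geometric} as a special case, omitting all details. So there is no ``paper's own proof'' to compare against beyond that citation.

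That said, what you outline---parabolic rescaling with $\psi_R=R^{1/2}\psi$, Moser/hole-filling for $|\nabla u|^2$ under small energy with the spinorial source controlled via the global $W^{1,p}$ bound (Lemma~\ref{key estimate}), then alternating elliptic Schauder for the Dirac equation with quasilinear parabolic Schauder for the map equation---is exactly the argument carried out in \cite{jost2018geometric}. Your identification of the role of $\varepsilon_2$ (keeping the $(\alpha-1)$-weighted second-order term absorbable into the principal part, so that all constants remain uniform as $\alpha\downarrow 1$) is also the correct point. In short: your proposal is a faithful reconstruction of the proof the paper chose to outsource.
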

Since $M$ is closed, $x_0$ has to be an interior point of $M$. Therefore, our Lemma is just a special case of  the Lemma 3.4 in \cite{jost2018geometric}. So we omit the proof here.

\section{Existence of $\alpha$-Dirac-harmonic maps}
In this section, we will prove Theorem \ref{existence of DH maps} by the following theorem on the existence of $\alpha$-Dirac-harmonic maps for $\alpha>1$.

\begin{thm}\label{Existence of alpha DH map}
Let $M$ be a closed spin surface and $(N,h)$ a real analytic closed manifold. Suppose there exists a map $u_0\in C^{2+\mu}(M,N)$ for some $\mu\in(0,1)$ such that ${\rm dim}_{\mathbb{H}}{\rm ker}\slashed{D}^{u_0}=1$. Then for any $\alpha\in(1,1+\epsilon_1)$, there exists a nontrivial smooth $\alpha$-Dirac-harmonic map $(u_\alpha,\psi_\alpha)$ such that the map part $u_\alpha$ stays in the same homotopy class as $u_0$ {and $\|\psi_\alpha\|_{L^2}=1$. }
\end{thm}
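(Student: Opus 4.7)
My plan is to produce $(u_\alpha,\psi_\alpha)$ as a subsequential limit, along times tending to infinity, of a globally defined piecewise smooth $\alpha$-Dirac-harmonic map flow started at $(u_0,\psi_0)$, in which the flow is restarted at each singular time by invoking the density Lemma \ref{density}. Set $m_0^\alpha:=\inf_{v\in[u_0]}E^\alpha(v)$, which is finite. Since $\dim_{\mathbb H}\ker\slashed{D}^{u_0}=1$, Theorem \ref{short-time alpha flow} supplies a smooth solution $(u(t),\psi(t))$ on some maximal half-open interval $[0,T_1)$ on which the kernel stays quaternionic one-dimensional, $\|\psi(t)\|_{L^2}=1$, and $E^\alpha(u(t))$ is nonincreasing by the identity of Lemma \ref{map energy}. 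The $\epsilon$-regularity of Lemma \ref{small energy regularity}, together with the $W^{1,p}$-bound from Lemma \ref{key estimate}, yields uniform $C^{2+\mu}\times C^{1+\mu}$ control, so that the trace $u(T_1^-)$ exists in $C^{2+\mu}(M,N)$ whenever $T_1<\infty$.

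At such a singular time $T_1$, where $\dim_{\mathbb H}\ker\slashed{D}^{u(T_1^-)}>1$, I appeal to Lemma \ref{density}: for any preassigned $\eta_1>0$ there is $\tilde u\in[u_0]$ with $\dim_{\mathbb H}\ker\slashed{D}^{\tilde u}=1$, $\|\tilde u-u(T_1^-)\|_{C^{2+\mu}}<\eta_1$, and hence $|E^\alpha(\tilde u)-E^\alpha(u(T_1^-))|<C\eta_1$; the $C^0$-closeness preserves the homotopy class. Picking a unit-norm $\tilde\psi\in\ker\slashed{D}^{\tilde u}$ and restarting the flow from $(\tilde u,\tilde\psi)$, and iterating with $\eta_k=2^{-k}$, produces a piecewise smooth flow on $[0,\infty)$ with an isolated (at most countable) singular set $\{T_k\}$, satisfying $\|\psi(t)\|_{L^2}=1$ throughout and the global energy bookkeeping
\begin{equation*}
2\alpha\sum_k\int_{T_{k-1}}^{T_k}\!\!\int_M(1+|\nabla u|^2)^{\alpha-1}|\partial_t u|^2\,dx\,dt\leq E^\alpha(u_0)-m_0^\alpha+C\sum_k\eta_k<\infty.
\end{equation*}

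Consequently there is a sequence of smooth times $s_n\to\infty$ with $\int_M|\partial_tu(s_n)|^2\to 0$. At each such time the flow equation \eqref{alpha map} forces $\tau^\alpha(u(s_n))-\frac{1}{\alpha}\mathcal R(u(s_n),\psi(s_n))\to 0$ in $L^2$, while $\slashed{D}^{u(s_n)}\psi(s_n)=0$ identically. The uniform $W^{1,2\alpha}$-bound on $u(s_n)$ from the $\alpha$-energy bound, the $W^{1,p}$-bound on $\psi(s_n)$ for $1<p<2$ from Lemma \ref{key estimate}, and the $\epsilon$-regularity of Lemma \ref{small energy regularity} together preclude energy concentration at fixed $\alpha>1$ and yield compactness; extracting a subsequence $(u(s_n),\psi(s_n))\to(u_\alpha,\psi_\alpha)$ in $C^{2+\mu}\times C^{1+\mu}$, the limit solves \eqref{alpha geldh}, lies in $[u_0]$ by $C^0$-convergence and the homotopy-preserving construction, and satisfies $\|\psi_\alpha\|_{L^2}=1$ by passing the normalization through the limit; in particular $\psi_\alpha\not\equiv 0$ and $(u_\alpha,\psi_\alpha)$ is the desired nontrivial $\alpha$-Dirac-harmonic map, with smoothness following from elliptic bootstrapping on \eqref{alpha geldh}.

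The hard part I anticipate is ensuring that this restart scheme actually produces a global piecewise smooth flow with $s_n\to\infty$: the length of the existence interval in Theorem \ref{short-time alpha flow} after each restart depends on the spectral gap $\Lambda(\tilde u)$ and on $\|\tilde u\|_{C^{2+\mu}}$, so the quantitative $C^1$-openness part of Lemma \ref{density} must be used to show that these stability quantities are preserved under the $\eta_k$-perturbation and that the singular times do not accumulate at any finite time, allowing the energy identity to deliver times $s_n$ lying inside the smooth portion of the flow. A subsidiary point is that the threshold $\epsilon_3$ in Lemma \ref{small energy regularity} can be taken uniform along the entire construction, which follows from the global $W^{1,2\alpha}$-bound controlling local energies by the $\alpha$-energy.
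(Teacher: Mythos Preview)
Your restart scheme has a genuine gap exactly where you flag ``the hard part.'' At a singular time $T_k$ the limit map $u(T_k^-)$ has $\dim_{\mathbb H}\ker\slashed D^{u(T_k^-)}\ge 3$, so several eigenvalues of $\slashed D$ have coalesced at $0$. When you perturb in $C^{2+\mu}$ by $\eta_k$ to a map $\tilde u$ with one-dimensional kernel, continuity of the spectrum forces the first nonzero eigenvalue $\Lambda(\tilde u)$ to be of order at most $O(\eta_k)$; there is no lower bound. But the existence time produced by Theorem~\ref{short-time alpha flow} is governed (through the radius $R$ in Lemma~\ref{dim of kernel u} and Lemma~\ref{resolvents}) by precisely this spectral gap, so your flow pieces can satisfy $T_{k+1}-T_k\to 0$ and the singular times can accumulate at a finite $T_*$. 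The $C^1$-openness in Lemma~\ref{density} is a qualitative statement about the set of minimal-kernel maps; it gives no quantitative floor on $\Lambda(\tilde u)$ once $\tilde u$ is forced to lie near a map whose kernel is \emph{not} minimal. Your energy bookkeeping and the existence of times with small $\int_M|\partial_t u|^2$ are both correct, but they do not by themselves deliver $s_n\to\infty$.

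The paper sidesteps this obstruction by restarting differently and by carrying a fallback argument. Rather than choosing $\tilde u$ close to $u(T_k^-)$, it uses Lemma~\ref{density} to pick \emph{any} $u_1\in[u_0]$ with minimal kernel and $\alpha$-energy \emph{strictly} between $m_0^\alpha$ and $E^\alpha(u(T_k^-))$; no proximity to the previous flow is demanded, and the energy drops strictly across every restart. The paper then confronts head-on the possibility that the piece-lengths $T_i$ shrink to zero: in that case the monotone energies $E^\alpha(u_i)$ have a limit, and if that limit equals $m_0^\alpha$ one extracts via Lemma~\ref{small energy regularity} a \emph{minimizing} $\alpha$-harmonic map $u_\alpha$. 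Should $\slashed D^{u_\alpha}$ still fail to have minimal kernel, the paper invokes the $\mathbb Z_2$-grading $\Sigma M=\Sigma^+M\oplus\Sigma^-M$ from \cite{ammann2013dirac}: because $u_\alpha$ minimizes $E^\alpha$ in its homotopy class, the first variation of $L^\alpha$ vanishes for the pairs $(u_\alpha,\psi_\alpha^\pm)$ with $\psi_\alpha\in\ker\slashed D^{u_\alpha}$, and these are therefore $\alpha$-Dirac-harmonic. This last step---producing a nontrivial $\alpha$-Dirac-harmonic map from a minimizing $\alpha$-harmonic map without any kernel hypothesis---is the ingredient your argument lacks, and it is what allows the paper to dispense with any control on how short the flow pieces become.
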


\begin{proof}[Proof of Theorem \ref{Existence of alpha DH map}]
Let us denote the energy minimizer by 
\begin{equation}
m^\alpha_0:=\inf\{E^\alpha(u)|u\in W^{1,2\alpha}(M,N)\cap[u_0]\},
\end{equation}
where $[u_0]$ denotes the homotopy class of $u_0$. If $u_0$ is a minimizing $\alpha$-harmonic map, it follows from Lemma \ref{map energy} that $(u_0,\psi_0)$ is an $\alpha$-Dirac-harmonic map for any $\psi_0\in{\rm ker}\slashed{D}^{u_0}$. If $E^\alpha(u_0)>m^\alpha_0$, then Theorem \ref{short-time alpha flow} gives us a solution
\begin{equation}
u\in C^{2+\mu,1+\mu/2}(M\times[0,T),N)
\end{equation}
and
\begin{equation}
\psi\in C^{\mu,\mu/2}(M\times[0,T), \Sigma M\otimes u^*TN)\cap{\cap_{0<s<T}}L^\infty([0,s];C^{1+\mu}(M)).
\end{equation}
to the problem \eqref{alpha map}-\eqref{alpha dirac} with the initial values \eqref{initial value}.

 By Lemma \ref{map energy}, we know
\begin{equation}
\int_M(1+|\nabla{u}|^2)^\alpha\leq E^\alpha(u_0).
\end{equation}
Then it is easy to see that, for any $0<\epsilon<\epsilon_3$, there exists a positive constant $r_0=r_0(\epsilon, \alpha, E^\alpha(u_0))$ such that for all $(x,t)\in M\times[0,T)$, there holds
\begin{equation}
\int_{B_{r_0}(x)}|\nabla{u}|^2\leq CE^\alpha(u_0)^{1/\alpha}r_0^{1-\frac{1}{\alpha}}\leq \epsilon.
\end{equation}
Therefore, by Theorem \ref{short-time alpha flow} and Lemma \ref{small energy regularity}, we know that the singular time can be characterized as
\begin{equation}
Z=\{T\in\mathbb{R}|\lim\limits_{t_i\nearrow T}{\rm dim}_{\mathbb{H}}{\rm ker}\slashed{D}^{u_{t_i}}>1\}
\end{equation}
and there exists a sequence $\{t_i\}\nearrow T$ such that 
\begin{equation}
(u(\cdot,t_i),\psi(\cdot,t_i))\to (u(\cdot,T),\psi(\cdot,T)) \ \text{in} \ C^{2+\mu}(M)\times C^{1+\mu/2}(M)
\end{equation}
{and
\begin{equation}
\|\psi(\cdot,T)\|_{L^2}=1.
\end{equation}
}

If $Z=\emptyset$, then, by Theorem \ref{short-time alpha flow}, we can extend the solution $(u,\psi)$ beyond the time $T$ by using $(u(\cdot,T),\psi(\cdot,T))$ as new initial values. Thus, we have the global existence of the flow. For the limit behavior as $t\to\infty$,  Lemma \ref{map energy} implies that there exists a sequence $\{t_i\}\to\infty$ such that 
\begin{equation}\label{time derivative}
\int_M|\partial_tu|^2(\cdot,t_i)\to 0.
\end{equation}
Together with  Lemma \ref{small energy regularity}, there is a subsequence, still denoted by $\{t_i\}$, and an $\alpha$-Dirac-harmonic map $(u_\alpha, \psi_\alpha)\in C^\infty(M,N)\times C^\infty(M,\Sigma M\otimes(u_\alpha)^*TN)$ such that $(u(\cdot,t_i),\psi(\cdot,t_i))$ converges to $(u_\alpha, \psi_\alpha)$ in $C^2(M)\times C^1(M)$ {and $\|\psi_\alpha\|_{L^2}=1$.}

If $Z\neq\emptyset$ and $T\in{Z}$, let us assume that $E^\alpha(u(\cdot,T))>m^\alpha_0$ and $(u(\cdot,T),\psi(\cdot,T))$ is not already an $\alpha$-Dirac-harmonic map.  We extend the flow as follows: By Lemma \ref{density}, there is a map $u_1\in C^{2+\mu}(M,N)$ such that 
 \begin{equation}\label{lower energy}
 m^\alpha_0<E^\alpha(u_1)<E^\alpha(u(\cdot,T))
 \end{equation}
 and 
 \begin{equation}\label{minimal kernel}
 {\rm dim}_{\mathbb{H}}{\rm ker}\slashed{D}^{u_1}=1.
 \end{equation}
Thus, {picking any $\psi_1\in{\rm ker}\slashed{D}^{u_1}$ with $\|\psi_1\|_{L^2}=1$}, we can restart the flow from the new initial values $(u_1,\psi_1)$. If there is no singular time along the flow started from $(u_1,\psi_1)$, then we get an $\alpha$-Dirac-harmonic map as in the case of $Z=\emptyset$. Otherwise, we use again the procedure above to choose $(u_2,\psi_2)$ as  initial values and restart the flow.  {This procedure will stop in  finitely or infinitely many steps.}

 If infinitely many steps are required, then there exist infinitely many  flow pieces  $\{u_i(x,t)\}_{i=1,\dots,\infty}$ and $\{T_i\}_{i=1,\dots,\infty}$ such that
\begin{equation}
E^\alpha(u_i(t))+2\alpha\int_0^t\int_M(1+|\nabla{u}|^2)^{\alpha-1}|\partial_{t}u|^2=E^\alpha(u_i), \ \forall t\in(0,T_i),
\end{equation}
where $u_i(\cdot,0)=u_i\in C^{2+\mu}(M,N)$.  If the $T_i$ are bounded away from zero, then there is $\{t_i\}$ such that \eqref{time derivative} hold for $t_i\in(0,T_i)$. Therefore, we have an $\alpha$-Dirac-harmonic map as before. If $T_i\to 0$, then we look at the limit of $E^\alpha(u_i)$. If the limit is strictly bigger than $m^\alpha_0$, we again choose another map satisfying \eqref{lower energy} and \eqref{minimal kernel} as a new starting point. If the limit is exactly $m^\alpha_0$, then we choose $\{t_i\}$ such that $t_i\in(0,T_i)$ for each $i$. By Lemma \ref{small energy regularity},  $u_i(t_i)$ converges in $C^2(M)\times C^1(M)$ to a minimizing $\alpha$-harmonic map $u_\alpha$. If $\slashed{D}^{u_\alpha}$ has minimal kernel, then for any $\psi\in{\rm ker}{\slashed{D}^{u_\alpha}}$, $(u_\alpha,\psi)$ is an $\alpha$-Dirac-harmonic map as we showed in the beginning of the proof. If $\slashed{D}^{u_\alpha}$ has non-minimal kernel, we use the decomposition of the twisted spinor bundle through the $\mathbb{Z}_2$-grading $G\otimes id$ (see \cite{ammann2013dirac}). {More precisely, for any smooth variation $(u_s)_{s\in(-\epsilon,\epsilon)}$ of $u_0$, we split the bundle $\Sigma M\otimes u_s^*TN$ into $\Sigma M\otimes u_s^*TN=\Sigma^+M\otimes u_s^*TN\oplus\Sigma^-M\otimes u_s^*TN$, which is orthogonal in the complex sense and parallel. Consequently, for any $\psi_0\in{\rm ker}{\slashed{D}^{u_0}}$, we have
\begin{equation} 
(\slashed{D}^{u_0}\psi_0^+,\psi_0^+)_{L^2}=(\slashed{D}^{u_0}\psi_0^-,\psi_0^-)_{L^2}=0
\end{equation}
for $\psi_0=\psi_0^++\psi_0^-$, where $\psi_0^{\pm}=\psi_{\pm}\otimes u_0^*TN$ and $\psi_{\pm}\in\Sigma^{\pm}$. Therefore, $\psi_s^{\pm}:=\psi_{\pm}\otimes u_s^*TN$ are smooth variations of $\psi_0^\pm$, respectively, such that 
\begin{equation}
\frac{d}{dt}\bigg|_{t=0}(\slashed{D}^{u_s}\psi_s^\pm,\psi_s^\pm)_{L^2}=0.
\end{equation} 
By taking $u_0=u_\alpha$ and $\psi_0=\psi_\alpha\in{\rm ker}{\slashed{D}^{u_\alpha}}$, the first variation formula of $L^\alpha$ implies that $(u_\alpha,\psi_\alpha^\pm)$ are $\alpha$-Dirac-harmonic maps (see Corollary 5.2 in \cite{ammann2013dirac}). In particular, we can choose $\psi_\alpha$ such that $\|\psi_\alpha^+\|_{L^2}=1$ or $\|\psi_\alpha^-\|=1$.}

If it stops in finitely many steps, there exists a sequence $\{t_i\}$ and some $0<T_k\leq+\infty$ such that
\begin{equation}
\lim\limits_{t_i\nearrow T}(u(\cdot,t_i),\psi(\cdot,t_i))\to (u_\alpha,\psi_\alpha) \ \text{in} \ C^2(M)\times C^1(M),
\end{equation}
where $(u_\alpha,\psi_\alpha)$ either is an $\alpha$-Dirac-harmonic map or satisfies $E^\alpha(u_\alpha)=m^\alpha_0$. And in the latter case, $u_\alpha$ is a minimizing $\alpha$-harmonic map. Then we can again get a nontrivial $\alpha$-Dirac-harmonic map as above.
\end{proof}

By Theorem \ref{Existence of alpha DH map}, for any $\alpha>1$ sufficiently close to $1$, there exists an $\alpha$-Dirac-harmonic map $(u_\alpha,\psi_\alpha)$ with the properties
\begin{equation}\label{map energy bound}
E^\alpha(u_\alpha)\leq E^\alpha(u_0), \  \ \|\psi_\alpha\|_{L^2}=1 
\end{equation}
and 
\begin{equation}\label{Lp of spinor}
||\psi_\alpha||_{W^{1,p}(M)}\leq C(p,M,N,E^\alpha(u_0))
\end{equation}
for any $1<p<2$. Then it is natural to consider the limit behavior when $\alpha$ decreases to $1$. Since the blow-up analysis was already well studied in \cite{jost2018geometric}, we can directly prove Theorem \ref{existence of DH maps}.

\begin{proof}[Proof of Theorem \ref{existence of DH maps}]
By Theorem \ref{Existence of alpha DH map}, we have a sequence of smooth $\alpha$-Dirac-harmonic maps $(u_{\alpha_k},\psi_{\alpha_k})$ with \eqref{map energy bound} and \eqref{Lp of spinor}, where $\alpha_k\searrow1$ as $k\to\infty$. Then, by Theorem 2.1 in \cite{jost2018geometric}, there is a constant $\epsilon_0>0$ and a Dirac-harmonic map 
$$
(\Phi,\Psi)\in C^\infty(M,N)\times C^\infty(M,\Sigma M\otimes\Phi^*TN)
$$
such that
\begin{equation}
(u_{\alpha_k},\psi_{\alpha_k})\to(\Phi,\Psi) \ \text{in} \  C^2_{loc}(M\setminus{\mathcal S})\times C^1_{loc}(M\setminus{\mathcal S}),
\end{equation}
where 
\begin{equation}
\mathcal{S}:=\{x\in M|\liminf_{\alpha_k\to1}E(u_{\alpha_k};B_{r}(x))\geq\frac{\epsilon_0}{2}, \forall r>0 \}
\end{equation}
is a finite set.  

Now, taking $x_0\in\mathcal{S}$, there exists a sequence $x_{\alpha_k}\to x_0$, $\lambda_{\alpha_k}\to0$ and a nontrivial Dirac-harmonic map $(\phi,\xi): \mathbb{R}^2\to N$ such that 
\begin{equation}
(u_{\alpha_k}(x_{\alpha_k}+\lambda_{\alpha_k}x),\lambda_{\alpha_k}^{{\alpha_k}-1}\sqrt{\lambda_{\alpha_k}}\psi_{\alpha_k}(x_{\alpha_k}+\lambda_{\alpha_k}x))\to(\phi,\xi) \ \text{in} \ C^2_{loc}(\mathbb{R}^2),
\end{equation}
as $\alpha\to1$. Choose any $p^*>4$, by taking $p=\frac{2p^*}{2+p^*}$ in \eqref{Lp of spinor}, we get
\begin{equation}
||\psi_{\alpha_k}||_{L^{p^*}(M)}\leq C(p^*,M,N,E^{\alpha_k}(u_0))
\end{equation}
and 
\begin{equation}
||\xi||_{L^4(D_R(0))}=\lim\limits_{{\alpha_k}\to1}\lambda^{{\alpha_k}-1}_{\alpha_k}||\psi_{\alpha_k}||_{L^4(D_{\lambda_{\alpha_k}{R}}(x_{\alpha_k}))}\leq\lim\limits_{{\alpha_k}\to1}C||\psi_{\alpha_k}||_{L^{p^*}(M)}(\lambda_{\alpha_k}{R})^{2(\frac14-\frac{1}{p^*})}=0.
\end{equation}
Thus, $\xi=0$ and $\phi$ can be extended to a nontrivial smooth harmonic sphere. Since $||\psi_\alpha||_{L^2}=1$, the Sobolev embedding implies that $||\Psi||_{L^2(M)}=\lim\limits_{{\alpha_k}\to1}||\psi_\alpha||_{L^2(M)}=1$. Therefore, $(\Phi,\Psi)$ is nontrivial. Furthermore, if $(N,h)$ does not admit any nontrivial harmonic sphere, then 
\begin{equation}
(u_{\alpha_k},\psi_{\alpha_k})\to(\Phi,\Psi) \ \text{in} \  C^2(M)\times C^1(M).
\end{equation}
Therefore, $\Phi$ is in the same homotopy class as $u_0$.
\end{proof}

\section{Appendix}
In Section 3, we used some convenient properties of the elements in $B^T_R(\bar{u}_0)$. Those properties were already discussed in \cite{wittmann2017short}. However, the function space used there is $B^T_R({v}_0)$, where $v_0(x,t)=\int_Mp(x,y,t)u_0(y)dV(y)$, because the solution there is the unique fixed point of the following integral representation over $B^T_R({v}_0)$
\begin{equation}
Lu(x,t):=v_0(x,t)+\int_0^t\int_Mp(x,y,t-\tau)(F_1(u_\tau)+F_2(u_\tau,\psi(u_\tau)))dV(y)d\tau
\end{equation}
where $p$ is the heat kernel of $M$, $F_1$ and $F_2$ are defined as in \eqref{F1A} and \eqref{F2A}, respectively. Our proof for the short-time existence is different from there, and the space $B^T_R(\bar{u}_0)$ is more natural and convenient in our situation. Therefore, we cannot directly use the statement in \cite{wittmann2017short}. Although the space is changed, the proofs of those nice properties are parallel. In fact, one can see from the following that to make the elements in $B^T_R(\bar{u}_0)$ satisfy nice properties \eqref{small nbhd} and \eqref{shortest geodesic}, it is sufficient to choose $R$ small, namely, $T$ is independent of $R$. This is the biggest advantage. In the following, we will give the precise statement of the  properties we need in Section 3 and  proofs for the most important lemmas. 

For every $T>0$, we consider the space $B^T_R(\bar{u}_0):=\{u\in X_T|\|u-\bar{u}_0\|_{X_T}\leq R\}\cap\{u|_{t=0}=u_0\}$ where $\bar{u}_0(x,t)=u_0(x)$ for any $t\in[0,T]$. To get the necessary estimate for the solution of the constraint equation, we will use the parallel transport along the unique shortest geodesic between $u_0(x)$ and $\pi\circ{u_t}(x)$ in N. To do this, we need the following lemma which tells us that the distances in $N$ can be locally controlled by the distances in $\mathbb{R}^q$.

\begin{lem}\cite{wittmann2017short}\label{distance control}
Let $N\subset\mathbb{R}^q$ be a closed embedded submanifold of $\mathbb{R}^q$ with the induced Riemannian metric. Denote by $A$ its Weingarten map. Choose $C>0$ such that $||A||\leq C$, where
\begin{equation} 
||A||:=\sup\{||A_vX||| \ v\in T_p^\perp{N}, \ X\in T_pN, \ ||v||=1, \ ||X||=1, \ p\in N\}.
\end{equation}
Then there exists $0<\delta_0<\frac1C$ such that for all $0<\delta\leq\delta_0$ and for all $p,q\in N$ with $||p-q||_2<\delta$, it holds that 
\begin{equation}
d^N(p,q)\leq\frac{1}{1-\delta{C}}||p-q||_2,
\end{equation}
where we denote the Euclidean norm by $||\cdot||_2$ in this section.
\end{lem}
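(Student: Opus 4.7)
The strategy is to compare $d^N(p,q)$ with $\|p-q\|_2$ by projecting the Euclidean straight-line segment between $p$ and $q$ onto $N$ via the nearest-point projection $\pi$, and bounding the length of the resulting curve. First, shrink $\delta_0$ (keeping $\delta_0 < 1/C$) so that $\pi: N_{\delta_0}\to N$ is a well-defined smooth retraction (tubular neighborhood theorem). Given $p,q\in N$ with $\|p-q\|_2 < \delta \le \delta_0$, the Euclidean segment $\gamma_e(t) := p + t(q-p)$ satisfies $\mathrm{dist}(\gamma_e(t),N) \le \|\gamma_e(t)-p\|_2 \le \delta \le \delta_0$, so the projected curve $\sigma(t) := \pi(\gamma_e(t))$ is a smooth path in $N$ from $p$ to $q$, and hence $d^N(p,q) \le L(\sigma)$.

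The core step is a pointwise bound on the operator norm of $d\pi_z$ for $z \in N_\delta$. Writing $z = \pi(z) + \nu$ with $\nu \in T_{\pi(z)}^\perp N$ and $\|\nu\|_2 \le \delta$, the tubular-neighborhood diffeomorphism $\Phi(p',\nu') := p'+\nu'$ on the normal bundle has differential (computed using the normal connection and the Weingarten formula $\dot\nu = -A_\nu \dot\gamma + \nabla^\perp_{\dot\gamma}\nu$)
\begin{equation*}
d\Phi_{(\pi(z),\nu)}(X,\nu_1) \;=\; (I - A_\nu)X + \nu_1, \qquad X \in T_{\pi(z)}N,\ \nu_1 \in T_{\pi(z)}^\perp N.
\end{equation*}
Differentiating the identity $\pi \circ \Phi(p',\nu') = p'$ and decomposing any $w \in \mathbb{R}^q$ as $w = w^T + w^\perp$ along $T_{\pi(z)}N \oplus T_{\pi(z)}^\perp N$ then yields
\begin{equation*}
d\pi_z(w) \;=\; (I - A_\nu)^{-1} w^T.
\end{equation*}
Because $\|A_\nu\|_{\mathrm{op}} \le C\|\nu\|_2 \le C\delta < 1$ by the choice of $\delta_0$, a Neumann series expansion gives $\|d\pi_z\|_{\mathrm{op}} \le (1-C\delta)^{-1}$.

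Applying this bound with $w = q-p$ and integrating along $\gamma_e$ yields
\begin{equation*}
d^N(p,q) \;\le\; L(\sigma) \;=\; \int_0^1 \|d\pi_{\gamma_e(t)}(q-p)\|_2 \, dt \;\le\; \frac{\|q-p\|_2}{1-C\delta},
\end{equation*}
which is exactly the claimed inequality. The main obstacle is the clean identification of $d\pi_z$ in terms of $A_\nu$; once this formula is in hand, the rest is a routine length estimate. An alternative route that sidesteps the explicit normal-bundle calculus is to differentiate the defining relation $\langle z - \pi(z),\, Y\rangle_2 = 0$ for all $Y \in T_{\pi(z)}N$, extract a linear system for $d\pi_z$, and control its inverse using $\|A\| \le C$; this leads to the same estimate with constants visibly depending only on $C$ and $\delta$.
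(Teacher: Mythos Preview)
The paper does not actually prove this lemma; it is stated with a citation to \cite{wittmann2017short} and then used without further argument. So there is no in-paper proof to compare against.

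That said, your argument is correct and is one of the standard ways to establish this kind of estimate. The key points---that the Euclidean segment $\gamma_e$ stays in $N_\delta$ because $\mathrm{dist}(\gamma_e(t),N)\le\|\gamma_e(t)-p\|_2=t\|q-p\|_2<\delta$, that $d\pi_z=(I-A_\nu)^{-1}\circ(\cdot)^T$ via the normal-bundle parametrization, and that $\|(I-A_\nu)^{-1}\|\le(1-C\delta)^{-1}$ by a Neumann series---are all sound, and the length bound for $\sigma=\pi\circ\gamma_e$ then gives exactly the desired inequality. Your remark that $\delta_0$ may need to be shrunk (beyond $\delta_0<1/C$) so that the tubular neighborhood is globally embedded is also well placed: the bound $\|A\|\le C$ guarantees that $\Phi$ is a local diffeomorphism on $\{\|\nu\|<1/C\}$, but compactness of $N$ is what lets you pass to a genuine tubular neighborhood where $\pi$ is single-valued and smooth.
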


In the following, we will choose $\delta$ and $R$ to ensure the existence of the unique shortest geodesics between the projections of any two elements in $B^T_R(\bar{u}_0)$. By the definition of $B^T_R(\bar{u}_0)$, we have 
\begin{equation}
||u(x,t)-\bar{u}_0(x,t)||_2=||u(x,t)-u_0(x)||_2\leq R
\end{equation}
for all $(x,t)\in M\times[0,T]$. Then taking any $R\leq\delta$, we get
\begin{equation}
d(u(x,t),N)\leq||u(x,t)-u_0(x)||_2\leq\delta
\end{equation}
for all $(x,t)\in M\times[0,T]$. Therefore, $u(x,t)\in N_\delta$. In particular, $\pi\circ{u}$ is $N$-valued, and 
\begin{equation}\label{proj u}
||(\pi\circ{u})(x,t)-u_0(x)||_2\leq||(\pi\circ{u})(x,t)-u(x,t)||_2+||u(x,t)-u_0(x)||_2\leq2\delta.
\end{equation}
Now, we choose $\epsilon>0$ with $2\epsilon<{\rm inj}(N)$ and $\delta$ such that 
\begin{equation}\label{delta}
\delta<\min\{\frac14\delta_0,\frac14\epsilon(1-\delta_0C)\}，
\end{equation}
where $\delta_0,C>0$ are as in Lemma \ref{distance control}. From \eqref{proj u}, we know that for all $u,v\in B^T_R(\bar{u}_0)$, it holds that
\begin{equation}
||(\pi\circ{u})(x,t)-(\pi\circ{v})(x,s)||_2\leq4\delta<\delta_0.
\end{equation}
Then Lemma \ref{distance control} and \eqref{delta} imply that 
\begin{equation}\label{control distance on N}
\begin{split}
d^N((\pi\circ{u})(x,t),(\pi\circ{v})(x,s))&\leq\frac{1}{1-\delta_0C}||(\pi\circ{u})(x,t)-(\pi\circ{v})(x,s)||_2\\
&\leq\frac{1}{1-\delta_0C}4\delta<\epsilon<\frac12{\rm inj}(N).
\end{split}\end{equation}
To summarize, under the choice of constants as follows:
\begin{equation}\label{choice of constants}
		\begin{cases}
		\epsilon>0, & \text{s.t.} \ 2\epsilon<{\rm inj}(N), \\
		\delta>0, & \text{s.t.} \ \delta<\min\{\frac14\delta_0,\frac14\epsilon(1-\delta_0C)\}, \\
		R\leq\delta,
				\end{cases}
	\end{equation}
we have shown that 
\begin{equation}\label{small nbhd}
u(x,t)\in N_\delta
\end{equation}
and 
\begin{equation}\label{shortest geodesic}
d^N((\pi\circ u)(x,t),(\pi\circ v)(x,s))<\epsilon<\frac12{\rm inj}(N)
\end{equation}
for all $u,v\in B_R^T(\bar{u}_0)$, $x\in M$ and $t,s\in[0,T]$. 

Using the  properties \eqref{small nbhd} and \eqref{shortest geodesic}, we can parallelly prove two important estimates as in \cite{wittmann2017short}. One is for the Dirac operators along maps.

\begin{lem}\label{Dirac along maps u}
Choose $\epsilon$, $\delta$ and $R$ as in \eqref{choice of constants}. If $\epsilon>0$ is small enough, then there exists $C=C(R)>0$ such that 
\begin{equation}
||((P^{v_s,u_t})^{-1}\slashed{D}^{\pi\circ{u_t}}P^{v_s,u_t}-\slashed{D}^{\pi\circ{v_s}})\psi(x)||\leq C||u_t-v_s||_{C^0(M,\mathbb{R}^q)}||\psi(x)||
\end{equation}
for any $u,v\in B^T_R(\bar{u}_0)$, $\psi\in\Gamma_{C^1}(\Sigma M\otimes(\pi\circ{v_s})^*TN)$, $x\in M$ and $t,s\in[0,T]$.
\end{lem}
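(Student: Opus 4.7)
My plan is to exploit the fact that Clifford multiplication acts only on the $\Sigma M$-factor of $\Sigma M\otimes\phi^*TN$, while the parallel transport $P^{v_s,u_t}$ acts only on the $\phi^*TN$-factor, so the two commute. Consequently the principal symbols of $(P^{v_s,u_t})^{-1}\slashed{D}^{\pi\circ u_t}P^{v_s,u_t}$ and $\slashed{D}^{\pi\circ v_s}$ coincide, and their difference is a zeroth-order operator --- Clifford multiplication by a section of $T^*M\otimes\mathrm{End}((\pi\circ v_s)^*TN)$ encoding the discrepancy of the two pulled-back Levi-Civita connections. Thus it suffices to bound the pointwise fiber norm of this bundle endomorphism by $C(R)\|u_t-v_s\|_{C^0(M,\mathbb{R}^q)}$ and then invoke the fact that Clifford multiplication is norm-preserving on $\Sigma M$.

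Concretely, I would fix a coordinate chart on $N$ near $u_0(x)$, which by \eqref{shortest geodesic} contains both $\pi\circ u_t(x)$ and $\pi\circ v_s(x)$, and let $Q(x)=(Q^i_j(x))$ be the matrix of $P^{v_s,u_t}(x)$ in this frame. Writing $\psi=\psi^j\otimes\partial_j|_{\pi\circ v_s}$ and expanding both Dirac operators via their coordinate formula, the $\slashed\partial\psi^m$-terms cancel (confirming tensoriality of the difference) and one obtains
\begin{equation*}
\bigl((P^{v_s,u_t})^{-1}\slashed{D}^{\pi\circ u_t}P^{v_s,u_t}-\slashed{D}^{\pi\circ v_s}\bigr)\psi = A^m_j\,(e_\alpha\cdot\psi^j)\otimes\partial_m|_{\pi\circ v_s},
\end{equation*}
with coefficient matrix
\begin{equation*}
A = Q^{-1}\nabla_{e_\alpha}Q \;+\; Q^{-1}\,\Gamma(\pi\circ u_t)\,\nabla_{e_\alpha}(\pi\circ u_t)\,Q \;-\; \Gamma(\pi\circ v_s)\,\nabla_{e_\alpha}(\pi\circ v_s).
\end{equation*}
Membership $u,v\in B^T_R(\bar u_0)$ gives uniform $C^1$-bounds on $\pi\circ u_t$ and $\pi\circ v_s$ depending only on $R$ and $u_0$; together with smoothness of $\Gamma$ on the compact $N$, Lemma \ref{distance control}, and the choice \eqref{choice of constants}, this yields $|Q-I|\leq C\|u_t-v_s\|_{C^0}$ and $|\Gamma(\pi\circ u_t)-\Gamma(\pi\circ v_s)|\leq C\|u_t-v_s\|_{C^0}$.

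The main obstacle is controlling $\nabla_{e_\alpha}Q$, which naively involves $d(\pi\circ u_t)$ and $d(\pi\circ v_s)$ separately, quantities not bounded by $\|u_t-v_s\|_{C^0}$ alone. The resolution is to differentiate in the base variable $x$ the parallel-transport ODE $\partial_s Q(x,s)+\Gamma(\gamma(x,s))\dot\gamma(x,s)Q(x,s)=0$ along the geodesic family $\gamma(x,s)=\exp_{\pi\circ v_s(x)}(sV(x))$ with $V(x)=\exp_{\pi\circ v_s(x)}^{-1}\pi\circ u_t(x)$, and apply variation of constants. The resulting expression for $\nabla_{e_\alpha}Q$ combines with the two Christoffel terms in $A$ so that the contributions involving $d(\pi\circ u_t)-d(\pi\circ v_s)$ cancel, leaving only terms of order $d^N(\pi\circ u_t,\pi\circ v_s)$, which is bounded by $\|u_t-v_s\|_{C^0}$ via Lemma \ref{distance control}. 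This cancellation --- reflecting the fact that $A$ vanishes identically when $u_t\equiv v_s$ regardless of their first derivatives --- is the core technical point; once in place, the pointwise estimate on $A$ gives the stated inequality.
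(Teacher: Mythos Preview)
Your proposal is correct and follows essentially the same route as the paper. Both arguments recognise that the difference is a zeroth-order operator acting only on the $\phi^*TN$-factor, both introduce the geodesic homotopy $F(x,\tau)=\exp_{\pi\circ v_s(x)}\bigl(\tau\,\exp^{-1}_{\pi\circ v_s(x)}\pi\circ u_t(x)\bigr)$, and both extract the estimate by differentiating parallel transport along this family in the homotopy parameter.

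The one difference worth noting is how the ``cancellation'' you flag as the core technical point is handled. You describe it in coordinates: differentiate the ODE for $Q(x,\tau)$ in the base variable, apply variation of constants, and then argue that the terms involving $d(\pi\circ u_t)$ and $d(\pi\circ v_s)$ separately combine with the Christoffel pieces of $A$ to cancel. The paper bypasses this bookkeeping by working invariantly with the pullback connection $\nabla^{F^*TN}$ on $F^*TN$: setting $\Theta_i(x,\tau)=\mathcal P_{0,\tau}(b_i\circ f_0)$, one has $\nabla^{F^*TN}_{\partial_\tau}\Theta_i=0$ by construction, and hence
\[
\nabla^{F^*TN}_{\partial_\tau}\nabla^{F^*TN}_{e_\alpha}\Theta_i
= R^{F^*TN}(\partial_\tau,e_\alpha)\Theta_i
= R^{TN}\bigl(dF(\partial_\tau),dF(e_\alpha)\bigr)\Theta_i,
\]
since $[\partial_\tau,e_\alpha]=0$. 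Integrating in $\tau$ then gives the estimate directly, with $\|dF(\partial_\tau)\|=d^N(f_0,f_1)\leq C\|u_t-v_s\|_{C^0}$ supplying the Lipschitz factor and $\|dF(e_\alpha)\|\leq C(R)$ absorbing the remaining dependence. This is exactly your cancellation, made automatic by the curvature identity; you may find it cleaner than tracking the coordinate pieces of $A$ by hand.
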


\begin{proof}
We write $f_0:=\pi\circ{v_s}$, $f_1:=\pi\circ{u_t}$ and define the $C^1$ map $F: M\times[0,1]\to N$ by 
\begin{equation}
F(x,t):=\exp_{f_0(x)}(t\exp^{-1}_{f_0(x)}f_1(x))
\end{equation}
where $\exp$ denotes the exponential map of the Riemannian manifold $N$. Note that $F(\cdot,0)=f_0$, $F(\cdot,1)=f_1$ and $t\mapsto F(x,t)$ is the unique shortest geodesic from $f_0(x)$ to $f_1(x)$. We denote by 
\begin{equation}
\mathcal{P}_{t_1,t_2}=\mathcal{P}_{t_1,t_2}(x): T_{F(x,t_1)}N\to T_{F(x,t_2)}N
\end{equation}
the parallel transport in $F^*TN$ with respect to $\nabla^{F^*TN}$ (pullback of the Levi-Civita connection on $N$) along the curve $\gamma_x(t):=(x,t)$ from $\gamma_x(t_1)$ to $\gamma_x(t_2)$, $x\in M$, $t_1,t_2\in[0,1]$. In particular, $\mathcal{P}_{0,1}=P^{v_s,u_t}$. Let $\psi\in\Gamma_{C^1}(\Sigma M\otimes(f_0)^*TN)$. We have
\begin{equation}\label{difference of Dirac operaor}
\begin{split}
&((\mathcal{P}_{0,1})^{-1}\slashed{D}^{f_1}\mathcal{P}_{0,1}-\slashed{D}^{f_0})\psi\\
&=(e_\alpha\cdot\psi^i)\otimes(((\mathcal{P}_{0,1})^{-1}\nabla_{e_\alpha}^{f_1^*TN}\mathcal{P}_{0,1}-\nabla_{e_\alpha}^{f_0^*TN})(b_i\circ{f_0}))
\end{split}
\end{equation}
where $\psi=\psi^i\otimes(b_i\circ{f_0})$, $\{b_i\}$ is an orthonormal frame of $TN$, $\psi^i$ are local $C^1$ sections of $\Sigma M$, and $\{e_\alpha\}$ is an orthonormal frame of $TM$.

We define local $C^1$ sections $\Theta_i$ of $F^*TN$ by 
\begin{equation}
\Theta_i(x,t):=\mathcal{P}_{0,t}(x)(b_i\circ{f_0})(x).
\end{equation}
For each $t\in[0,1]$ we define the functions $T_{ij}(\cdot,t):=T_{ij}^\alpha(\cdot,t)$ by
\begin{equation}\label{Tij}
(\mathcal{P}_{0,t})^{-1}((\nabla_{e_\alpha}^{F^*TN}\Theta_i)(x,t))=\sum_jT_{ij}^\alpha(x,t)(b_j\circ{f_0})(x).
\end{equation}
So far, we only know that the $T_{ij}$ are continuous. In the following, we will perform some formal calculations and justify them afterwards. By a straightforward computation, we have 
\begin{equation}\label{difference of T}
\begin{split}
&||((\mathcal{P}_{0,1})^{-1}\nabla_{e_\alpha}^{f_1^*TN}\mathcal{P}_{0,1}-\nabla_{e_\alpha}^{f_0^*TN})(b_i\circ{f_0})(x)||_h^2\\
&=||(\mathcal{P}_{0,1})^{-1}((\nabla_{e_\alpha}^{F^*TN}\Theta_i)(x,1))-(\mathcal{P}_{0,0})^{-1}((\nabla_{e_\alpha}^{F^*TN}\Theta_i)(x,0))||_h^2\\
&=||\sum_jT_{ij}(x,1)(b_j\circ{f_0})(x)-\sum_jT_{ij}(x,0)(b_j\circ{f_0})(x)||_h^2\\
&=\sum_j(T_{ij}(x,1)-T_{ij}(x,0))^2\\
&=\sum_j\left(\int_0^1\frac{d}{dt}\bigg|_{t=r}T_{ij}(x,t)dr\right)^2.
\end{split}\end{equation}
Therefore we want to control the first time-derivative of the $T_{ij}$. Equation \eqref{Tij} implies that these time-derivatives are related to the curvature of $F^*TN$. More precisely, for all $X\in\Gamma(TM)$ we have
\begin{equation}\label{derivative of T}
\begin{split}
&\frac{d}{dt}\bigg|_{t=r}\left((\mathcal{P}_{0,t})^{-1}\left((\nabla_{X}^{F^*TN}\Theta_i)(x,t)\right)\right)\\
&=\frac{d}{dt}\bigg|_{t=0}\left((\mathcal{P}_{0,t+r})^{-1}\left((\nabla_{X}^{F^*TN}\Theta_i)(x,t+r)\right)\right)\\
&=\frac{d}{dt}\bigg|_{t=0}\left((\mathcal{P}_{0,r})^{-1}(\mathcal{P}_{r,r+t})^{-1}\left((\nabla_{X}^{F^*TN}\Theta_i)(x,t+r)\right)\right)\\
&=(\mathcal{P}_{0,r})^{-1}\frac{d}{dt}\bigg|_{t=0}\left((\mathcal{P}_{r,r+t})^{-1}\left((\nabla_{X}^{F^*TN}\Theta_i)(x,t+r)\right)\right)\\
&=(\mathcal{P}_{0,r})^{-1}\left((\nabla_{\frac{\partial}{\partial t}}^{F^*TN}\nabla_{X}^{F^*TN}\Theta_i)(x,r)\right).
\end{split}\end{equation}
Now, let us justify the formal calculations \eqref{difference of T} and \eqref{derivative of T}. Combining the definition of $\Theta_i$ as parallel transport and a careful examination of the regularity of F we
deduce that $(\nabla_{\frac{\partial}{\partial t}}^{F^*TN}\nabla_{X}^{F^*TN}\Theta_i)(x,r)$ exists. Then \eqref{derivative of T} holds. Together with \eqref{Tij}, we know that the $T_{ij}$ are differentiable in $t$. Therefore \eqref{difference of T} also holds. We further get 
\begin{equation}
\begin{split}
\nabla_{\frac{\partial}{\partial t}}^{F^*TN}\nabla_{X}^{F^*TN}\Theta_i
&=R^{F^*TN}(\frac{\partial}{\partial t},X)\Theta_i+\nabla_{X}^{F^*TN}\nabla_{\frac{\partial}{\partial t}}^{F^*TN}\Theta_i-\nabla_{[\frac{\partial}{\partial t},X]}^{F^*TN}\Theta_i\\
&=R^{F^*TN}(\frac{\partial}{\partial t},X)\Theta_i=R^{TN}(dF(\frac{\partial}{\partial t}),dF(X))\Theta_i,
\end{split}\end{equation}
since $\nabla_{\frac{\partial}{\partial t}}^{F^*TN}\Theta_i=0$ by the definition of $\Theta_i$ and $[\frac{\partial}{\partial t},X]=0$.

This implies 
\begin{equation}
\begin{split}
\sum_j\left(\frac{d}{dt}\bigg|_{t=r}T_{ij}(x,t)\right)^2
&=||\frac{d}{dt}\bigg|_{t=r}\left((\mathcal{P}_{0,t})^{-1}((\nabla_{e_\alpha}^{F^*TN}\Theta_i)(x,t))\right)||^2_h\\
&=||\left(\nabla_{\frac{\partial}{\partial t}}^{F^*TN}\nabla_{e_\alpha}^{F^*TN}\Theta_i\right)(x,r)||_h^2\\
&=||R^{TN}(dF_{(x,r)}(\frac{\partial}{\partial t}),dF_{(x,r)}(e_\alpha))\Theta_i(x,r)||_h^2\\
&\leq C_1||dF_{(x,r)}({\partial_t})||_h^2||dF_{(x,r)}(e_\alpha))||_h^2,
\end{split}\end{equation}
where $C_1$ only depends on $N$.

In the following we estimate $||dF_{(x,r)}({\partial_t})||_h$ and $||dF_{(x,r)}(e_\alpha))||_h$. We have 
\begin{equation}
dF_{(x,r)}({\partial_t}|_{(x,r)})=\frac{\partial}{\partial t}\bigg|_{t=r}(\exp_{f_0(x)}(t\exp^{-1}_{f_0(x)}f_1(x)))=c'(r),
\end{equation}
where $c(t):=\exp_{f_0(x)}(t\exp^{-1}_{f_0(x)}f_1(x))$ is a geodesic in $N$. In particular, $c'$ is parallel along $c$ and thus $||c'(r)||_h=||c'(0)||_h=||\exp^{-1}_{f_0(x)}f_1(x)||_h$. Therefore, we get 
\begin{equation}
||dF_{(x,r)}({\partial_t})||_h=||\exp^{-1}_{f_0(x)}f_1(x)||_h\leq d^N(f_0(x),f_1(x))\leq C_2||u_t-v_s||_{C^0(M,\mathbb{R}^q)},
\end{equation}
where we have used Lemma \ref{distance control} and the Lipschitz continuity of $\pi$. Moreover, there exists $C_3(R)>0$ such that $||dF_{(x,r)}(e_\alpha))||_h\leq C_3(R)$ for all $(x,r)\in M\times[0,1]$. 

We have shown 
\begin{equation}
\sum_j\left(\frac{d}{dt}\bigg|_{t=r}T_{ij}(x,t)\right)^2\leq C_1C_2^2C_3(R)^2||u_t-v_s||_{C^0(M,\mathbb{R}^q)}^2
\end{equation}
for all $(x,t)$. Combining this with \eqref{difference of Dirac operaor} and \eqref{difference of T}, we complete the proof.
\end{proof}

The other one is for the parallel transport.
\begin{lem}\label{PT}
Choose $\epsilon$, $\delta$ and $R$ as in \eqref{choice of constants}. If $\epsilon>0$ is small enough, then there exists $C=C(\epsilon)>0$ such that 
\begin{equation}
||P^{v_s,u_0}P^{u_t,v_s}P^{u_0,u_t}Z-Z||\leq C||u_t-v_s||_{C^0(M,\mathbb{R}^q)}||Z||
\end{equation}
for all $Z\in T_{u_0(x)}N$, $u,v\in B^T_R(\bar{u}_0)$, $x\in M$ and $t,s\in[0,T]$.
\end{lem}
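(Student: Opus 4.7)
The strategy is to recognize $A := P^{v_s,u_0}P^{u_t,v_s}P^{u_0,u_t}$ as parallel transport around the small geodesic triangle in $N$ with vertices $p_0 := u_0(x)$, $p_1 := \pi(u_t(x))$, $p_2 := \pi(v_s(x))$, and to bound its deviation from the identity via the curvature--holonomy formula, exploiting that the side $p_1p_2$ is short. By \eqref{shortest geodesic} and \eqref{control distance on N}, each pair of vertices is joined by a unique minimizing geodesic of length less than $\epsilon<\frac{1}{2}{\rm inj}(N)$. Moreover, combining Lemma \ref{distance control} with the Lipschitz continuity of the nearest-point projection $\pi$ yields
\begin{equation*}
d^N(p_1,p_2) \leq \frac{L_\pi}{1-\delta_0 C}\,\|u_t-v_s\|_{C^0(M,\mathbb{R}^q)},
\end{equation*}
exactly as in the proof of Lemma \ref{Dirac along maps u}.

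To exploit this quantitatively I would introduce a filled-in triangle $F:[0,1]^2\to N$ by
\begin{equation*}
F(s,t) := \exp_{p_0}\bigl(s\exp_{p_0}^{-1}\gamma(t)\bigr), \qquad \gamma(t) := \exp_{p_1}\bigl(t\exp_{p_1}^{-1} p_2\bigr),
\end{equation*}
in complete analogy with the map used in the proof of Lemma \ref{Dirac along maps u}. Then $F$ collapses the edge $\{s=0\}$ to the single point $p_0$, sends $\{t=0\}$ and $\{t=1\}$ to the geodesics $p_0p_1$ and $p_0p_2$, and sends $\{s=1\}$ to $\gamma$. Consequently the image under $F$ of the boundary of $[0,1]^2$ traces out the geodesic triangle $p_0\to p_1\to p_2\to p_0$, and parallel transport of any $Z\in T_{p_0}N$ around this loop equals precisely $AZ$.

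Let $W(s,t)$ denote the parallel transport of $Z$ along $r\mapsto F(r,t)$ from $r=0$ up to $r=s$, so that $\nabla_s W\equiv 0$, $W(0,t)\equiv Z$, $W(1,0)=P^{u_0,u_t}Z$, and $W(1,1)=P^{u_0,v_s}Z$. Running the commutator calculation from the proof of Lemma \ref{Dirac along maps u} -- based on $\nabla_s\nabla_tW-\nabla_t\nabla_sW=R^{TN}(\partial_sF,\partial_tF)W$ together with the boundary identity $\nabla_tW(0,t)=0$ forced by the collapse of $\{s=0\}$ to the single point $p_0$ -- one obtains $\nabla_tW(1,t)=\int_0^1 R^{TN}(\partial_sF,\partial_tF)W\,dr$, with the implicit parallel transport along $\alpha_t(\cdot):=F(\cdot,t)$. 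Integrating in $t$ along $\gamma$ and then applying $P^{v_s,u_0}$ converts this identity into
\begin{equation*}
AZ - Z = -P^{v_s,u_0}\int_0^1\!\!\int_0^1 R^{TN}\bigl(\partial_sF,\partial_tF\bigr)W\,ds\,dt
\end{equation*}
(again with the appropriate parallel transports inside the integral), whose integrand is pointwise bounded by $\|R^{TN}\|_{C^0(N)}\cdot|\partial_sF|\cdot|\partial_tF|\cdot|Z|$.

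It remains to estimate the two partial derivatives of $F$. Since $s\mapsto F(s,t)$ is a constant-speed geodesic of length $d^N(p_0,\gamma(t))\leq 2\epsilon$, one has $|\partial_sF|\leq 2\epsilon$. For the other direction I would use the Jacobi field interpretation: for each fixed $t$, the vector field $J_t(s):=\partial_tF(s,t)$ is a Jacobi field along $\alpha_t$ with boundary values $J_t(0)=0$ and $J_t(1)=\gamma'(t)$. For $\epsilon$ small enough that $2\epsilon$ is well below ${\rm inj}(N)$, a standard Jacobi field estimate on geodesics of bounded length gives $|J_t(s)|\leq C(\epsilon)|\gamma'(t)|=C(\epsilon)\,d^N(p_1,p_2)$. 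Combined with the short-side bound, this produces
\begin{equation*}
\|AZ-Z\| \leq C(\epsilon)\,\|u_t-v_s\|_{C^0(M,\mathbb{R}^q)}\,\|Z\|,
\end{equation*}
which is the claim. I expect the most delicate part to be making the Jacobi field estimate uniform in the configuration of the triangle -- in particular handling the degenerate vertex $p_0$ where $F$ collapses -- but this should be routine once $\epsilon<\frac{1}{2}{\rm inj}(N)$, since then $\exp_{p_0}$ is a smooth diffeomorphism with derivatives bounded in terms of $\epsilon$ alone on the $2\epsilon$-ball about the origin in $T_{p_0}N$.
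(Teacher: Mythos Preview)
Your argument is correct: recognizing $A$ as the holonomy around the geodesic triangle $p_0p_1p_2$, filling it by the ruled surface $F(s,t)=\exp_{p_0}(s\exp_{p_0}^{-1}\gamma(t))$, and invoking the curvature--holonomy identity $\nabla_s\nabla_tW=R^{TN}(\partial_sF,\partial_tF)W$ (using $\nabla_sW=0$ and $\nabla_tW(0,t)=0$) yields exactly the estimate, with the crucial short side entering through $|\partial_tF|\leq C(\epsilon)|\gamma'(t)|\leq C(\epsilon)d^N(p_1,p_2)$.

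Note, however, that the paper does \emph{not} actually supply a proof of this lemma: immediately after stating it and Lemma~\ref{W1p norm} the authors write that ``the proofs of these two lemmas only depend on the existence of the unique shortest geodesic between any two maps in $B^T_R(\bar u_0)$ \ldots\ Therefore, we omit the detailed proof here,'' deferring to \cite{wittmann2017short}. So there is nothing to compare your argument against in this paper directly. That said, your proof is precisely in the spirit the authors intend: the homotopy $F$, the parallel field $W$ (the analogue of their $\Theta_i$), and the commutator-of-covariant-derivatives computation producing $R^{TN}(dF(\partial_t),dF(e_\alpha))$ are the same ingredients displayed in full in the proof of Lemma~\ref{Dirac along maps u}, and your Jacobi-field bound on $\partial_tF$ plays the role of their bound on $\|dF_{(x,r)}(e_\alpha)\|_h$. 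Your caveat about uniformity at the degenerate vertex is well taken but harmless: since $J_t(0)=0$ and the geodesics $\alpha_t$ have length at most $2\epsilon$, the Rauch-type comparison gives $|J_t(s)|\leq C(\epsilon)s\,|J_t'(0)|$ with $|J_t'(0)|$ controlled by $|\gamma'(t)|$ via the derivative of $\exp_{p_0}^{-1}$ on the $2\epsilon$-ball, so no separate treatment of $s=0$ is needed.
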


Consequently, we also have
\begin{lem}\label{W1p norm}
Choose $\epsilon$, $\delta$ and $R$ as in \eqref{choice of constants}. For $u,v\in B^T_R(\bar{u}_0)$, $s,t\in[0,T]$, the operator norm of the isomorphism of Banach spaces
\begin{equation}
P^{v_s,u_t}: \Gamma_{W^{1,p}}(\Sigma M\otimes(\pi\circ{v_s})^*TN)\to\Gamma_{W^{1,p}}(\Sigma M\otimes(\pi\circ{u_t})^*TN)
\end{equation}
is uniformly bounded, i.e. there exists $C=C(R,p)$ such that
\begin{equation}
||P^{v_s,u_t}||_{L(W^{1,p},W^{1,p})}\leq C
\end{equation}
for all $u,v\in B^T_R(\bar{u}_0)$, $x\in M$ and $t,s\in[0,T]$.
\end{lem}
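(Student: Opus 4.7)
The plan is to reduce the bound on $\|P^{v_s,u_t}\|_{L(W^{1,p},W^{1,p})}$ to pointwise estimates on a parallel-transported local frame, exploiting that $P^{v_s,u_t}$ is a fiberwise isometry and that the differentiation-of-parallel-transport machinery was already developed in the proof of Lemma \ref{Dirac along maps u}. The $L^p$ half is essentially free; only the derivative half requires work.

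First I would fix a local orthonormal frame $\{b_i\}_{i=1}^n$ of $TN$ on a neighborhood containing the images of both $\pi\circ u_t$ and $\pi\circ v_s$ (global case by a finite partition of unity, which exists thanks to the closeness property \eqref{shortest geodesic}). Writing a section $\psi \in \Gamma_{W^{1,p}}(\Sigma M\otimes(\pi\circ v_s)^*TN)$ as $\psi=\psi^i\otimes(b_i\circ\pi\circ v_s)$ with $\psi^i$ local $W^{1,p}$-spinors on $M$, the transported section becomes $P^{v_s,u_t}\psi = \psi^i\otimes\tilde b_i$, where $\tilde b_i(x):=P^{v_s,u_t}_x(b_i\circ\pi\circ v_s(x))$ is a $C^1$ local section of $(\pi\circ u_t)^*TN$. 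Since $P^{v_s,u_t}$ is fiberwise an isometry, $|P^{v_s,u_t}\psi|=|\psi|$ pointwise, which gives $\|P^{v_s,u_t}\psi\|_{L^p}=\|\psi\|_{L^p}$.

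Next I would expand both twisted connection derivatives in their respective orthonormal frames:
\begin{align*}
\tilde\nabla^{v_s}\psi &= \nabla\psi^i\otimes(b_i\circ\pi\circ v_s) + \psi^i\otimes\nabla^{v_s^*TN}(b_i\circ\pi\circ v_s),\\
\tilde\nabla^{u_t}(P^{v_s,u_t}\psi) &= \nabla\psi^i\otimes\tilde b_i + \psi^i\otimes\nabla^{u_t^*TN}\tilde b_i,
\end{align*}
and observe that the first summands on each right-hand side have equal pointwise length, since $\{b_i\}$ and $\{\tilde b_i\}$ are orthonormal frames in their respective fibers. What remains is to bound the frame-derivative terms pointwise by $C(R)\,|\psi|$. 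For the $b_i\circ\pi\circ v_s$ piece this follows from smoothness of $\pi$ and of the Christoffel symbols of $N$, combined with the uniform bound $\|\nabla(\pi\circ v_s)\|_{C^0}\leq C(R,u_0)$ available on $B^T_R(\bar u_0)$; similarly for any term coming from decomposing $\nabla^{u_t^*TN}\tilde b_i$ in terms of the Christoffels of $N$ paired with $\nabla(\pi\circ u_t)$. The remaining genuinely new ingredient is controlling the variation of $\tilde b_i$ itself due to parallel transport along the moving geodesic family, which I would handle exactly by recycling the calculation from Lemma \ref{Dirac along maps u}: writing $\tilde b_i(x)=\Theta_i(x,1)$ for the parallel transport along the geodesic family $F(x,\cdot)$ joining $\pi\circ v_s(x)$ to $\pi\circ u_t(x)$, the derivative $\nabla^{u_t^*TN}_{e_\alpha}\tilde b_i$ is obtained by integrating $\nabla^{F^*TN}_{\partial_t}\nabla^{F^*TN}_{e_\alpha}\Theta_i = R^{TN}(dF(\partial_t),dF(e_\alpha))\Theta_i$, and this curvature term is pointwise bounded by $\|dF(\partial_t)\|_h\cdot\|dF(e_\alpha)\|_h\leq C(R)$.

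Integrating the resulting pointwise estimate $|\tilde\nabla^{u_t}(P^{v_s,u_t}\psi)|\leq|\tilde\nabla^{v_s}\psi|+C(R)|\psi|$ to the $p$-th power over $M$ yields $\|P^{v_s,u_t}\psi\|_{W^{1,p}}\leq C(R,p)\|\psi\|_{W^{1,p}}$; swapping the roles of $(v,s)$ and $(u,t)$ supplies the reverse bound and confirms that $P^{v_s,u_t}$ is an isomorphism of Banach spaces. The only delicate step I expect is the pointwise bound on $\nabla^{u_t^*TN}\tilde b_i$, and specifically the justification of the curvature identity for the $C^1$ family $F$ (existence of $\nabla_{\partial_t}\nabla_{e_\alpha}\Theta_i$ and differentiability of the $T_{ij}$-type coefficients in $t$); but this has already been handled in Lemma \ref{Dirac along maps u}, so the argument reduces to bookkeeping.
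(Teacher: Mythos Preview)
Your proposal is correct and is precisely the argument the paper has in mind: the authors omit the proof, saying only that it is parallel to \cite{wittmann2017short} and rests on the shortest-geodesic property \eqref{shortest geodesic}, and your write-up carries this out by recycling the $\Theta_i$/curvature machinery already set up in the proof of Lemma~\ref{Dirac along maps u}. One small wording fix: what is obtained by integrating $R^{TN}(dF(\partial_t),dF(e_\alpha))\Theta_i$ is the \emph{difference} $(\mathcal P_{0,1})^{-1}\nabla^{u_t^*TN}_{e_\alpha}\tilde b_i-\nabla^{v_s^*TN}_{e_\alpha}(b_i\circ\pi\circ v_s)$, not $\nabla^{u_t^*TN}_{e_\alpha}\tilde b_i$ itself; since you bound the $t=0$ boundary term separately, the conclusion is unaffected.
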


The proofs of these two lemmas only depend on the existence of the unique shortest geodesic between any two maps in $B^T_R(\bar{u}_0)$, which was already shown in \eqref{shortest geodesic}. Therefore, we omit the detailed proof here. Besides, by Lemma \ref{Dirac along maps u}, one can immediately prove the following Lemma by the Min-Max principle as in \cite{wittmann2017short}.

\begin{lem}\label{dim of kernel u}
Assume that ${\rm dim}_{\mathbb{K}}{\rm ker}(\slashed{D}^{u_0})=2l-1$, where $l\in\mathbb{N}$ and 
\begin{equation}
		\mathbb{K}=\begin{cases}
		\mathbb{C},& \text{if} \ m=0,1({\rm mod} \ 8), \\
		\mathbb{H},& \text{if} \ m=2,4({\rm mod} \ 8).
		\end{cases}
	\end{equation}
Choose $\epsilon$, $\delta$ and $R$ as in Lemma \ref{Dirac along maps u}. If $R$ is small enough, then 
\begin{equation}
{\rm dim}_{\mathbb{K}}{\rm ker}(\slashed{D}^{\pi\circ u_t})=1
\end{equation}
and there exists $\Lambda=\frac12\Lambda(u_0)$ such that 
\begin{equation}
\#\{{\rm spec}(\slashed{D}^{\pi\circ u_t})\cap[-\Lambda,\Lambda]\}=1
\end{equation}
for any $u\in B^T_R(\bar{u}_0)$ and $t\in[0,T]$, where $\Lambda(u_0)$ is a constant such that ${\rm spec}(\slashed{D}^{u_0})\setminus\{0\}\subset\mathbb{R}\setminus(-\Lambda(u_0),\Lambda(u_0))$.
\end{lem}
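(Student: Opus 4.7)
The strategy is to transport $\slashed{D}^{\pi \circ u_t}$ to a fixed Hilbert space via parallel transport, apply standard self-adjoint perturbation theory to control the spectrum in the gap around $0$, and then invoke the $\mathbb{Z}_2$-symmetry of the Dirac spectrum to identify which eigenvalue is $0$. Since the rest of the paper only uses this lemma in the minimal case (see the hypothesis of Theorem \ref{short-time alpha flow}), I carry out the plan under the specialization $l = 1$, i.e., ${\rm dim}_\mathbb{K} {\rm ker}(\slashed{D}^{u_0}) = 1$, which is what the conclusion ``dim $= 1$'' requires; for $l > 1$ the Weyl-stability argument below naturally yields a count of $2l-1$ rather than $1$.

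Concretely, define the self-adjoint operator $\tilde{\slashed{D}}^{u_t} := (P^{u_0, u_t})^{-1} \slashed{D}^{\pi \circ u_t} P^{u_0, u_t}$ on the fixed Hilbert space $\Gamma_{L^2}(\Sigma M \otimes u_0^* TN)$; it is unitarily equivalent to $\slashed{D}^{\pi \circ u_t}$ and hence has the same spectrum. Specializing Lemma \ref{Dirac along maps u} to $v = u_0$, $s = 0$ yields the pointwise bound $\|(\tilde{\slashed{D}}^{u_t} - \slashed{D}^{u_0}) \psi(x)\| \le C \|u_t - u_0\|_{C^0} \|\psi(x)\|$. Since this exhibits the difference as a zeroth-order multiplication with uniformly bounded coefficient on the compact surface $M$, it extends to an $L^2$ operator-norm bound $\|\tilde{\slashed{D}}^{u_t} - \slashed{D}^{u_0}\|_{L^2 \to L^2} \le CR$. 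I would then shrink $R$ so that $CR < \tfrac{1}{4}\Lambda(u_0)$. The standard Weyl perturbation inequality (eigenvalues labeled in nondecreasing order with multiplicity) shows that each eigenvalue of $\tilde{\slashed{D}}^{u_t}$ lies within distance $CR$ of the corresponding eigenvalue of $\slashed{D}^{u_0}$. Since the spectrum of $\slashed{D}^{u_0}$ inside $(-\Lambda(u_0), \Lambda(u_0))$ consists only of $0$ with $\mathbb{K}$-multiplicity $1$, this single $\mathbb{K}$-eigenvalue perturbs into $[-CR, CR] \subset [-\Lambda, \Lambda]$, while all remaining spectrum stays outside $[-\Lambda(u_0) + CR, \Lambda(u_0) - CR] \supset [-\Lambda, \Lambda]$. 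This establishes $\#\{\mathrm{spec}(\slashed{D}^{\pi \circ u_t}) \cap [-\Lambda, \Lambda]\} = 1$.

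To upgrade the count to ${\rm dim}_\mathbb{K} {\rm ker}(\slashed{D}^{\pi \circ u_t}) = 1$, I would invoke the spectral symmetry of the Dirac operator: in the quaternionic dimensions $m \equiv 2, 4 \pmod 8$ the grading involution $G \otimes \mathrm{id}$ anti-commutes with $\slashed{D}^{\pi \circ u_t}$, and an analogous involution exists in the complex cases $m \equiv 0, 1 \pmod 8$, so $\mathrm{spec}(\slashed{D}^{\pi \circ u_t})$ is symmetric about $0$ with matched $\mathbb{K}$-multiplicities. If the unique eigenvalue $\mu \in [-\Lambda, \Lambda]$ were nonzero, then $-\mu \ne \mu$ would also belong to $[-\Lambda, \Lambda]$ with equal multiplicity, contradicting the count of $1$. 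Hence $\mu = 0$ and the kernel exhausts this single eigenvalue, giving ${\rm dim}_\mathbb{K} {\rm ker}(\slashed{D}^{\pi \circ u_t}) = 1$. The main technical subtlety is the $L^2$-extension of Lemma \ref{Dirac along maps u}, whose proof is formulated pointwise on $C^1$ sections; this is routine because that proof exhibits the difference explicitly as multiplication by a section of $\mathrm{Hom}(\Sigma M \otimes (\pi \circ v_s)^* TN, \Sigma M \otimes (\pi \circ u_t)^* TN)$ whose pointwise operator norm is bounded by $C \|u_t - v_s\|_{C^0}$, so it extends continuously to $L^2$ with the same norm constant.
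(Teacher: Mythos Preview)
Your proposal is correct and follows essentially the same approach as the paper, which defers to the Min-Max principle argument in \cite{wittmann2017short}; the Weyl perturbation inequality you invoke is precisely the consequence of Min-Max one needs, and the spectral-symmetry step to force the lone eigenvalue in $[-\Lambda,\Lambda]$ to be $0$ is the standard additional ingredient. Your explicit observation that the lemma as stated only makes sense for $l=1$ (and that the perturbation count for $l>1$ would be $2l-1$) is a reasonable clarification of the paper's hypothesis, and your remark on extending Lemma~\ref{Dirac along maps u} from $C^1$ to $L^2$ via the zeroth-order nature of the difference is exactly the routine point that makes the operator-norm bound work.
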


Once we have the minimality of the kernel in Lemma \ref{dim of kernel u}, we can prove the following uniform bounds for the resolvents, which are  important for the Lipschitz continuity of the solution to the Dirac equation.
\begin{lem}\label{resolvents}
Assume we are in the situation of Lemma \ref{dim of kernel u}. We consider the resolvent $R(\lambda, \slashed{D}^{\pi\circ{u_t}}): \Gamma_{L^2}\to\Gamma_{L^2}$ of $\slashed{D}^{\pi\circ{u_t}}:\Gamma_{W^{1,2}}\to\Gamma_{L^2}$. By the $L^p$ estimate (see Lemma 2.1 in \cite{wittmann2017short}), we know the restriction
\begin{equation}
R(\lambda, \slashed{D}^{\pi\circ{u_t}}): \Gamma_{L^p}\to\Gamma_{W^{1,p}}
\end{equation}
is well-defined and bounded for any $2\leq p<\infty$. If $R>0$ is small enough, then there exists $C=C(p,R)>0$ such that
\begin{equation}
\sup_{|\lambda|=\frac{\Lambda}{2}}||R(\lambda, \slashed{D}^{\pi\circ{u_t}})||_{L(L^p,W^{1,p})}<C
\end{equation}
for any $u\in B^T_R(\bar{u}_0)$, $t\in[0,T]$.
\end{lem}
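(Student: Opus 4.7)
The plan is to compare $\slashed{D}^{\pi\circ u_t}$ with the fixed operator $\slashed{D}^{u_0}$ via parallel transport, and to conclude by a Neumann-series perturbation argument. As a first reduction, it suffices to establish a uniform bound on $\|R(\lambda,\slashed{D}^{\pi\circ u_t})\|_{L(L^p,L^p)}$: writing $\phi=R(\lambda,\slashed{D}^{\pi\circ u_t})\psi$ one has $\slashed{D}^{\pi\circ u_t}\phi=\psi+\lambda\phi$, so Lemma 2.1 of \cite{wittmann2017short} applied to $\slashed{D}^{\pi\circ u_t}$ yields
\begin{equation*}
\|\phi\|_{W^{1,p}}\leq C\bigl(\|\slashed{D}^{\pi\circ u_t}\phi\|_{L^p}+\|\phi\|_{L^p}\bigr)\leq C\bigl((1+|\lambda|)\|\phi\|_{L^p}+\|\psi\|_{L^p}\bigr),
\end{equation*}
with a constant depending only on $\|u_t\|_{C^1}$, which is uniformly controlled on $B^T_R(\bar u_0)$.

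Setting $T_u:=(P^{u_0,u_t})^{-1}\slashed{D}^{\pi\circ u_t}P^{u_0,u_t}$ as an operator on sections of $\Sigma M\otimes u_0^*TN$, Lemma \ref{Dirac along maps u} applied with $v=\bar u_0$, $s=0$ shows that the zeroth-order difference $V_u:=T_u-\slashed{D}^{u_0}$ satisfies $\|V_u\psi(x)\|\leq C\|u_t-u_0\|_{C^0}\|\psi(x)\|$, whence $\|V_u\|_{L(L^p,L^p)}\leq CR$. The curve $\gamma$ lies at distance at least $\Lambda/2$ from $\sigma(\slashed{D}^{u_0})$; standard $L^p$ elliptic theory for Dirac-type operators along the $C^{2+\mu}$ map $u_0$, combined with the holomorphy of $R_0(\lambda):=R(\lambda,\slashed{D}^{u_0})$ in $\lambda$ on the resolvent set and the compactness of $\gamma$, produces a uniform bound $\|R_0(\lambda)\|_{L(L^p,W^{1,p})}\leq C_0(p)$ for $\lambda\in\gamma$. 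Taking $R$ small enough that $C_0(p)\cdot CR<1/2$, the factorization
\begin{equation*}
T_u-\lambda=(\slashed{D}^{u_0}-\lambda)\bigl(I+R_0(\lambda)V_u\bigr)
\end{equation*}
lets us invert by a convergent Neumann series: $R(\lambda,T_u)=\bigl(I+R_0(\lambda)V_u\bigr)^{-1}R_0(\lambda)$ with $\|R(\lambda,T_u)\|_{L(L^p,L^p)}\leq 2C_0(p)$. Since parallel transport acts fiberwise by isometries it preserves $L^p$ norms, hence $R(\lambda,\slashed{D}^{\pi\circ u_t})=P^{u_0,u_t}R(\lambda,T_u)(P^{u_0,u_t})^{-1}$ inherits the same $L^p\to L^p$ bound, and the first step finishes the proof.

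The most delicate ingredient is the uniform $L^p$ estimate for the base resolvent $R_0(\lambda)$ when $p>2$: while the $L^2$ bound $\|R_0(\lambda)\|_{L(L^2,L^2)}\leq 2/\Lambda$ is immediate from self-adjointness and the distance of $\gamma$ to $\sigma(\slashed{D}^{u_0})$, extending it to the range $2\leq p<\infty$ requires invoking the $L^p$ Fredholm theory for the Dirac operator along a map on the closed surface $M$, together with continuity of the resolvent in $\lambda$, so that $R_0(\lambda):L^p\to W^{1,p}$ is bounded on the resolvent set with norm depending continuously on $\lambda$. Once this is in hand, the perturbation argument above is routine.
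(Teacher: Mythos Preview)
Your proof is correct. The paper itself omits the proof of this lemma, deferring to the analogous argument in \cite{wittmann2017short}; the approach you have written out---conjugating by parallel transport to a fixed bundle, using Lemma~\ref{Dirac along maps u} to see that the conjugated operator differs from $\slashed{D}^{u_0}$ by a zeroth-order term of size $O(R)$, and inverting via a Neumann series---is the standard perturbation argument one expects here.

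Two small remarks. First, your Neumann-series argument actually \emph{reproves} that $\gamma$ lies in the resolvent set of every $\slashed{D}^{\pi\circ u_t}$; you therefore do not need to invoke Lemma~\ref{dim of kernel u} explicitly, whereas the paper's phrasing (``Once we have the minimality of the kernel\ldots'') suggests using the uniform spectral gap from that lemma to bound $\|R(\lambda,\slashed{D}^{\pi\circ u_t})\|_{L(L^2,L^2)}$ directly for each $u_t$, and then bootstrapping. Both routes are valid; yours is slightly more self-contained.

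Second, the point you flag as ``most delicate''---the uniform $L^p\to W^{1,p}$ bound on $R_0(\lambda)$ for $\lambda\in\gamma$---can be made concrete in dimension two by a short bootstrap: from self-adjointness one has $\|R_0(\lambda)\|_{L(L^2,L^2)}\leq 2/\Lambda$, the $L^2$ elliptic estimate then gives $\|R_0(\lambda)\psi\|_{W^{1,2}}\leq C\|\psi\|_{L^2}\leq C\|\psi\|_{L^p}$, the Sobolev embedding $W^{1,2}(M)\hookrightarrow L^p(M)$ (valid for all $p<\infty$ on a surface) yields $\|R_0(\lambda)\psi\|_{L^p}\leq C\|\psi\|_{L^p}$, and a final application of the $L^p$ elliptic estimate closes the loop. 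Spelling this out would remove the only place where your argument is merely gestural.
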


Now, by the projector of the Dirac operator, we can construct a solution to the constraint equation whose nontrivialness follows from the following lemma.
\begin{lem}\label{projection to kernel}
In the situation of Lemma \ref{dim of kernel u}, for any fixed $u\in B^T_R(\bar{u}_0)$ and any $\psi\in{\rm ker}(\slashed{D}^{u_0})$ with $\|\psi\|_{L^2}=1$, we have
\begin{equation}\label{lower bound u}
\sqrt{\frac12}\leq\|\tilde\psi_1^{u_t}\|_{L^2}\leq1,
\end{equation} 
where $\tilde\psi^{u_t}=P^{u_0,u_t}\psi=\tilde\psi_1^{u_t}+\tilde\psi_2^{u_t}$ with respect to the decomposition $\Gamma_{L^2}={\rm ker}(\slashed{D}^{\pi\circ u_t})\oplus({\rm ker}(\slashed{D}^{\pi\circ u_t}))^\bot$ 
\end{lem}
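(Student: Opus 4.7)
The upper bound is immediate: parallel transport along $N$ is a pointwise isometry of $\Sigma M\otimes(\pi\circ u_0)^*TN\to\Sigma M\otimes(\pi\circ u_t)^*TN$, so $\|\tilde\psi^{u_t}\|_{L^2}=\|\psi\|_{L^2}=1$, and the orthogonal decomposition gives $\|\tilde\psi_1^{u_t}\|_{L^2}\le\|\tilde\psi^{u_t}\|_{L^2}=1$. So the real content is the lower bound.

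The plan for the lower bound is to show that $\tilde\psi^{u_t}$ is \emph{almost} in $\ker(\slashed{D}^{\pi\circ u_t})$ when $R$ is small, and then exploit the spectral gap guaranteed by Lemma \ref{dim of kernel u}. Concretely, since $\slashed{D}^{u_0}\psi=0$, I would write
\begin{equation*}
\slashed{D}^{\pi\circ u_t}\tilde\psi^{u_t}
=\slashed{D}^{\pi\circ u_t}P^{u_0,u_t}\psi
=P^{u_0,u_t}\bigl((P^{u_0,u_t})^{-1}\slashed{D}^{\pi\circ u_t}P^{u_0,u_t}-\slashed{D}^{u_0}\bigr)\psi.
\end{equation*}
By elliptic regularity $\psi$ is smooth, so Lemma \ref{Dirac along maps u} applies pointwise and, after integrating and using that parallel transport is an $L^2$-isometry, yields
\begin{equation*}
\|\slashed{D}^{\pi\circ u_t}\tilde\psi^{u_t}\|_{L^2}\le C\|u_t-u_0\|_{C^0(M,\mathbb{R}^q)}\|\psi\|_{L^2}\le CR,
\end{equation*}
where $C=C(R,u_0)$ remains bounded as $R$ shrinks.

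Next I use the spectral gap. Since $\tilde\psi_1^{u_t}\in\ker(\slashed{D}^{\pi\circ u_t})$ we have $\slashed{D}^{\pi\circ u_t}\tilde\psi^{u_t}=\slashed{D}^{\pi\circ u_t}\tilde\psi_2^{u_t}$. By Lemma \ref{dim of kernel u} the only eigenvalue of $\slashed{D}^{\pi\circ u_t}$ in $[-\Lambda,\Lambda]$ is $0$, so $\tilde\psi_2^{u_t}$ lies in the closed span of eigenspaces with $|\lambda|\ge\Lambda$, whence
\begin{equation*}
\|\slashed{D}^{\pi\circ u_t}\tilde\psi_2^{u_t}\|_{L^2}\ge\Lambda\|\tilde\psi_2^{u_t}\|_{L^2}.
\end{equation*}
Combining the two inequalities gives $\|\tilde\psi_2^{u_t}\|_{L^2}\le CR/\Lambda$, and Pythagoras (in $L^2$) yields
\begin{equation*}
\|\tilde\psi_1^{u_t}\|_{L^2}^2=\|\tilde\psi^{u_t}\|_{L^2}^2-\|\tilde\psi_2^{u_t}\|_{L^2}^2\ge 1-(CR/\Lambda)^2.
\end{equation*}
Choosing $R$ small enough that $(CR/\Lambda)^2\le 1/2$ gives the claimed bound.

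The main technical point is ensuring the constant in Lemma \ref{Dirac along maps u} is indeed uniform in $u\in B_R^T(\bar u_0)$ (it is, since the proof there only uses $C^1$ bounds which are controlled by $R$ plus $\|u_0\|_{C^1}$), and carefully noting that the lemma's hypothesis requires a $C^1$ spinor—satisfied here by elliptic regularity for $\psi\in\ker\slashed{D}^{u_0}$. After that, the argument reduces to the elementary quantitative stability of kernels under small perturbations of a self-adjoint operator with a spectral gap, which is exactly what the setup of Lemma \ref{dim of kernel u} was designed to give us.
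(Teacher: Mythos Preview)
Your proof is correct and follows the natural approach: the paper itself does not give a proof of this lemma (it is one of several results in the Appendix deferred to \cite{wittmann2017short}), and the argument there proceeds exactly as you do---use Lemma~\ref{Dirac along maps u} with $v_s=\bar u_0$ to bound $\|\slashed D^{\pi\circ u_t}\tilde\psi^{u_t}\|_{L^2}\le CR$, then invoke the spectral gap from Lemma~\ref{dim of kernel u} to control $\|\tilde\psi_2^{u_t}\|_{L^2}$ and conclude by Pythagoras. Your remarks on the uniformity of $C(R)$ as $R\to 0$ and on the $C^1$ regularity of $\psi$ are the right points to check, and both are fine.
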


In Section 3, to show the short-time existence of the heat for $\alpha$-Dirac-harmonic maps, we need the following Lipschitz estimate.

\begin{lem}\label{lip}
Choose $\delta$ as in \eqref{choice of constants}, $\epsilon$ as in Lemma \ref{Dirac along maps u} and Lemma \ref{PT}, $R$ as in Lemma \ref{dim of kernel u} and Lemma \ref{resolvents}. For any harmonic spinor $\psi\in{\rm ker}(\slashed{D}^{u_0})$, we define
\begin{equation}
\bar\psi(u_t):=\tilde\psi^{u_t}_1=-\frac{1}{2\pi i}\int_{\gamma}R(\lambda,\slashed{D}^{\pi\circ u_t})\sigma(u_t)d\lambda
\end{equation}
for any $u\in B^T_R(\bar{u}_0)$, where $\gamma$ is defined in the Section $2$ with $\Lambda=\frac12\Lambda(u_0)$. In particular, $\bar\psi(u_t)\in{\rm ker}(\slashed{D}^{\pi\circ u_t})\subset\Gamma_{C^0}(\Sigma M\otimes(\pi\circ u_t)^*TN)$. We write
\begin{equation}\label{def of psi u}
\psi(u_t):=\psi(u(\cdot,t))=\frac{\bar\psi(u_t)}{\|\bar\psi(u_t)\|_{L^2}}.
\end{equation}
Let $\psi^A(u_t)$ be the sections of $\Sigma M$ such that 
\begin{equation}
\psi(u_t)=\psi^A(u_t)\otimes(\partial_A\circ\pi\circ u_t)
\end{equation}
for $A=1,\cdots,q$. Then there exists $C=C(R,\epsilon,\psi_0)>0$ such that
\begin{equation}\label{difference of psi bar after pt}
\|P^{u_t,v_s}\bar\psi(u_t)(x)-\bar\psi(u_t)(x)\|\leq C\|u_t-v_s\|_{C^0(M,\mathbb{R}^q)}
\end{equation}
and
\begin{equation}\label{psi lip}
\|\psi^A(u_t)(x)-\psi^A(v_s)(x)\|\leq C\|u_t-v_s\|_{C^0(M,\mathbb{R}^q)}
\end{equation}
for all $u,v\in B^T_R(\bar{u}_0)$, $A=1,\cdots,q$, $x\in M$ and $s,t\in[0,T]$.
\end{lem}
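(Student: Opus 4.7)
The contour representation $\bar\psi(u_t)=-\frac{1}{2\pi i}\int_\gamma R(\lambda,\slashed{D}^{\pi\circ u_t})\sigma(u_t)\,d\lambda$ with $\sigma(u_t)=P^{u_0,u_t}\psi$ reduces the proof to controlling how the resolvent and the source depend on the underlying map. The bound \eqref{difference of psi bar after pt} will follow from a uniform $C^0$-bound on $\bar\psi(u_t)$ together with the fact that parallel transport along a short geodesic is $O(\|u_t-v_s\|_{C^0})$-close to the identity in the ambient $\mathbb{R}^q$. The bound \eqref{psi lip} requires in addition the full resolvent-perturbation inequality
\[
\|P^{u_t,v_s}\bar\psi(u_t)-\bar\psi(v_s)\|_{W^{1,p}}\le C\|u_t-v_s\|_{C^0},
\]
a normalization step, and a component-wise splitting.

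\textbf{Resolvent perturbation.} Set $B_{t,s}:=P^{u_t,v_s}\slashed{D}^{\pi\circ u_t}(P^{u_t,v_s})^{-1}$, so that $P^{u_t,v_s}R(\lambda,\slashed{D}^{\pi\circ u_t})(P^{u_t,v_s})^{-1}=R(\lambda,B_{t,s})$. By Lemma~\ref{Dirac along maps u}, $\|B_{t,s}-\slashed{D}^{\pi\circ v_s}\|_{\mathcal{L}(W^{1,p},L^p)}\le C\|u_t-v_s\|_{C^0}$; since $P^{u_t,v_s}$ is an $L^2$-isometry, the spectrum of $B_{t,s}$ coincides with that of $\slashed{D}^{\pi\circ u_t}$ and so $\gamma$ avoids it by Lemma~\ref{dim of kernel u}. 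The second resolvent identity
\[
R(\lambda,B_{t,s})-R(\lambda,\slashed{D}^{\pi\circ v_s})=R(\lambda,B_{t,s})(\slashed{D}^{\pi\circ v_s}-B_{t,s})R(\lambda,\slashed{D}^{\pi\circ v_s})
\]
combined with Lemma~\ref{resolvents} and Lemma~\ref{W1p norm} then gives $\|R(\lambda,B_{t,s})-R(\lambda,\slashed{D}^{\pi\circ v_s})\|_{\mathcal{L}(L^p,W^{1,p})}\le C\|u_t-v_s\|_{C^0}$ uniformly for $\lambda\in\gamma$. Lemma~\ref{PT} yields the source estimate $\|P^{u_t,v_s}\sigma(u_t)-\sigma(v_s)\|_{L^\infty}\le C\|u_t-v_s\|_{C^0}$. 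Splitting
\begin{multline*}
P^{u_t,v_s}\bar\psi(u_t)-\bar\psi(v_s)=-\tfrac{1}{2\pi i}\int_\gamma[R(\lambda,B_{t,s})-R(\lambda,\slashed{D}^{\pi\circ v_s})]P^{u_t,v_s}\sigma(u_t)\,d\lambda\\
-\tfrac{1}{2\pi i}\int_\gamma R(\lambda,\slashed{D}^{\pi\circ v_s})[P^{u_t,v_s}\sigma(u_t)-\sigma(v_s)]\,d\lambda,
\end{multline*}
bounding each integrand in $W^{1,p}$ by $C\|u_t-v_s\|_{C^0}$, and invoking the Sobolev embedding $W^{1,p}\hookrightarrow C^0$ for $p>2=\dim M$ gives the pointwise comparison.

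\textbf{Ambient comparison, normalization, and components.} The resolvent representation and Sobolev embedding give a uniform bound $\|\bar\psi(u_t)\|_{C^0}\le C$. The ODE satisfied by parallel transport in $N\subset\mathbb{R}^q$, $\frac{d}{dt}V=II(\gamma',V)\cdot$, implies $\|P^{u_t,v_s}(x)V-V\|_{\mathbb{R}^q}\le C\,d^N(\pi\circ u_t(x),\pi\circ v_s(x))\,\|V\|$, which by Lemma~\ref{distance control} and the Lipschitz continuity of $\pi$ is at most $C\|u_t-v_s\|_{C^0}\|V\|$; applied to $V=\bar\psi(u_t)(x)$ it yields \eqref{difference of psi bar after pt}. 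For \eqref{psi lip}, Lemma~\ref{projection to kernel} provides $N(u_t):=\|\bar\psi(u_t)\|_{L^2}\in[\sqrt{1/2},1]$, and since $P^{u_t,v_s}$ is an $L^2$-isometry, $|N(u_t)-N(v_s)|\le\|P^{u_t,v_s}\bar\psi(u_t)-\bar\psi(v_s)\|_{L^2}\le C\|u_t-v_s\|_{C^0}$ by the previous step, so the Lipschitz estimate transfers from $\bar\psi$ to $\psi=\bar\psi/N$. Finally,
\[
\psi^A(u_t)(x)-\psi^A(v_s)(x)=\bigl[\psi^A(u_t)(x)-(P^{u_t,v_s}\psi(u_t))^A(x)\bigr]+\bigl[(P^{u_t,v_s}\psi(u_t))^A(x)-\psi^A(v_s)(x)\bigr];
\]
the second bracket is a $\mathbb{R}^q$-component of the normalized resolvent-perturbation difference, while the first bracket is a component of $(\mathrm{Id}-P^{u_t,v_s})\psi(u_t)$, both of order $\|u_t-v_s\|_{C^0}$.

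\textbf{Main obstacle.} The delicate step is the resolvent perturbation. One must execute the second resolvent identity in the mixed operator-norm setting $\mathcal{L}(L^p,W^{1,p})$, using that $P^{u_t,v_s}$ is unitary on $L^2$ but only uniformly bounded on $W^{1,p}$ through Lemma~\ref{W1p norm}, and that the Dirac difference is small only as an operator $W^{1,p}\to L^p$ via Lemma~\ref{Dirac along maps u}. Once this norm bookkeeping is carried out carefully, the contour integration and the ambient ODE comparison for parallel transport are routine, and the normalization and component-splitting steps follow mechanically.
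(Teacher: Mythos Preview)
Your proposal is correct and follows essentially the same route as the paper: the second resolvent identity applied to the conjugated operator $B_{t,s}=P^{u_t,v_s}\slashed{D}^{\pi\circ u_t}(P^{u_t,v_s})^{-1}$, Lemmas~\ref{Dirac along maps u}, \ref{PT}, \ref{W1p norm} and \ref{resolvents} for the norm bookkeeping, Sobolev embedding $W^{1,p}\hookrightarrow C^0$, the ambient ODE $\frac{d}{dr}X=II(\gamma',X)$ for parallel transport in $\mathbb{R}^q$, and then normalization via Lemma~\ref{projection to kernel} together with the component splitting. The only cosmetic discrepancy is that the paper's own derivation of \eqref{difference of psi bar after pt} in fact establishes $\|P^{u_t,v_s}\bar\psi(u_t)(x)-\bar\psi(v_s)(x)\|\le C\|u_t-v_s\|_{C^0}$ (the displayed statement carries a typo with $\bar\psi(u_t)$ appearing twice), whereas you read \eqref{difference of psi bar after pt} literally and obtain it from the ambient-ODE comparison; since you prove both inequalities anyway, the difference is purely organizational.
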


\begin{proof}
Using the following resolvent identity for two operators $D_1,D_2$
\begin{equation}
R(\lambda,D_1)-R(\lambda,D_2)=R(\lambda,D_1)\circ(D_1-D_2)\circ R(\lambda,D_2),
\end{equation}
we have
\begin{equation}
\begin{split}
&P^{u_t,v_s}\bar\psi(u_t)-\bar\psi(u_t)\\
&=-\frac{1}{2\pi i}\bigg(\int_{\gamma}R(\lambda,P^{u_t,v_s}\slashed{D}^{\pi\circ u_t}(P^{u_t,v_s})^{-1})P^{u_t,v_s}P^{u_0,u_t}\psi_0\\
&\quad-\int_{\gamma}R(\lambda,\slashed{D}^{\pi\circ v_s})P^{u_0,v_s}\psi_0\bigg)\\
&=-\frac{1}{2\pi i}\int_{\gamma}R(\lambda,P^{u_t,v_s}\slashed{D}^{\pi\circ u_t}(P^{u_t,v_s})^{-1})\bigg(P^{u_t,v_s}P^{u_0,u_t}\psi_0-P^{u_0,v_s}\psi_0\bigg)\\
&\quad-\frac{1}{2\pi i}\int_{\gamma}\bigg(R(\lambda,P^{u_t,v_s}\slashed{D}^{\pi\circ u_t}(P^{u_t,v_s})^{-1})-R(\lambda,\slashed{D}^{\pi\circ v_s})\bigg)P^{u_0,v_s}\psi_0\\
&=-\frac{1}{2\pi i}\int_{\gamma}R(\lambda,P^{u_t,v_s}\slashed{D}^{\pi\circ u_t}(P^{u_t,v_s})^{-1})\bigg(P^{u_t,v_s}P^{u_0,u_t}\psi_0-P^{u_0,v_s}\psi_0\bigg)\\
&\begin{split}
\quad-\frac{1}{2\pi i}\int_{\gamma}&\bigg(R(\lambda,P^{u_t,v_s}\slashed{D}^{\pi\circ u_t}(P^{u_t,v_s})^{-1})\circ\left(P^{u_t,v_s}\slashed{D}^{\pi\circ u_t}(P^{u_t,v_s})^{-1}-\slashed{D}^{\pi\circ v_s}\right)\circ\\
&\quad R(\lambda,\slashed{D}^{\pi\circ v_s})\bigg)P^{u_0,v_s}\psi_0,
\end{split}
\end{split}\end{equation}
where $\gamma$ is defined in \eqref{general curve} with $\Lambda=\frac12\Lambda(u_0)$. Therefore, for $p$ large enough, we get
\begin{equation}
\begin{split}
&||P^{u_t,v_s}\bar\psi(u_t)(x)-\bar\psi(u_t)(x)||\leq C_1||P^{u_t,v_s}\bar\psi^{u_t}-\bar\psi^{v_s}||_{W^{1,p}(M)}\\
&\leq C_2\bigg\|\int_{\gamma}R(\lambda,P^{u_t,v_s}\slashed{D}^{\pi\circ u_t}(P^{u_t,v_s})^{-1})\bigg(P^{u_t,v_s}P^{u_0,u_t}\psi_0-P^{u_0,v_s}\psi_0\bigg)\bigg\|_{W^{1,p}(M)}\\
&\begin{split}
+C_2\bigg\|\int_{\gamma}&\bigg(R(\lambda,P^{u_t,v_s}\slashed{D}^{\pi\circ u_t}(P^{u_t,v_s})^{-1})\circ\left(P^{u_t,v_s}\slashed{D}^{\pi\circ u_t}(P^{u_t,v_s})^{-1}-\slashed{D}^{\pi\circ v_s}\right)\circ\\
&\quad R(\lambda,\slashed{D}^{\pi\circ v_s})\bigg)P^{u_0,v_s}\psi_0\bigg\|_{W^{1,p}(M)}
\end{split}\\
&\leq C_2\int_{\gamma}\bigg\|R(\lambda,P^{u_t,v_s}\slashed{D}^{\pi\circ u_t}(P^{u_t,v_s})^{-1})\bigg(P^{u_t,v_s}P^{u_0,u_t}\psi_0-P^{u_0,v_s}\psi_0\bigg)\bigg\|_{W^{1,p}(M)}\\
&\begin{split}
+C_2\int_{\gamma}&\bigg\|\bigg(R(\lambda,P^{u_t,v_s}\slashed{D}^{\pi\circ u_t}(P^{u_t,v_s})^{-1})\circ\left(P^{u_t,v_s}\slashed{D}^{\pi\circ u_t}(P^{u_t,v_s})^{-1}-\slashed{D}^{\pi\circ v_s}\right)\circ\\
&\quad R(\lambda,\slashed{D}^{\pi\circ v_s})\bigg)P^{u_0,v_s}\psi_0\bigg\|_{W^{1,p}(M)}
\end{split}\\
&\leq C_3\sup\limits_{{\rm Im}(\gamma)}\|R(\lambda,P^{u_t,v_s}\slashed{D}^{\pi\circ u_t}(P^{u_t,v_s})^{-1})\|_{L(L^p,W^{1,p})}\|P^{u_t,v_s}P^{u_0,u_t}\psi_0-P^{u_0,v_s}\psi_0\|_{L^p}\\
&\quad+C_3\sup\limits_{{\rm Im}(\gamma)}\|R(\lambda,P^{u_t,v_s}\slashed{D}^{\pi\circ u_t}(P^{u_t,v_s})^{-1})\|_{L(L^p,W^{1,p})}\sup\limits_{{\rm Im}(\gamma)}\|R(\lambda,\slashed{D}^{\pi\circ v_s})\|_{L(L^p,W^{1,p})}\\
&\quad\quad\|P^{u_t,v_s}\slashed{D}^{\pi\circ u_t}(P^{u_t,v_s})^{-1}-\slashed{D}^{\pi\circ v_s}\|_{L(W^{1,p},L^p)}\|P^{u_0,v_s}\psi_0\|_{L^p}.
\end{split}
\end{equation}

Now, we estimate all the terms in the right-hand side of the inequality above. First, by Lemma \ref{resolvents} and Lemma \ref{W1p norm}, we know that all the resolvents above are uniformly bounded. Next, by Lemma \ref{Dirac along maps u}, we have
\begin{equation}
\|P^{u_t,v_s}\slashed{D}^{\pi\circ u_t}(P^{u_t,v_s})^{-1}-\slashed{D}^{\pi\circ v_s}\|_{L(W^{1,p},L^p)}\leq C(R)\|u_t-v_s\|_{C^0(M,\mathbb{R}^q)}.
\end{equation}
Finally, by Lemma \ref{PT}, we obtain 
\begin{equation}
\|P^{u_t,v_s}P^{u_0,u_t}\psi_0-P^{u_0,v_s}\psi_0\|_{L^p}\leq C(\epsilon,\psi_0)\|u_t-v_s\|_{C^0(M,\mathbb{R}^q)}.
\end{equation}
Putting these together, we get \eqref{difference of psi bar after pt}.

Next, we want to show the following estimate which is very close to \eqref{psi lip}.
\begin{equation}\label{psi bar lip}
\|\bar{\psi}^A(u_t)(x)-\bar{\psi}^A(v_s)(x)\|\leq C(R,\epsilon,\psi_0)\|u_t-v_s\|_{C^0(M,\mathbb{R}^q)}.
\end{equation}

In fact, we have 
\begin{equation*}
\begin{split}
&\|\bar{\psi}^A(u_t)(x)-\bar{\psi}^A(v_s)(x)\|\\
&\leq\|\bar{\psi}(u_t)(x)-\bar{\psi}(v_s)(x)\|_{\Sigma_xM\otimes\mathbb{R}^q}\\
&\leq\|P^{u_t,v_s}\bar{\psi}(u_t)(x)-\bar{\psi}(v_s)(x)\|_{\Sigma_xM\otimes\mathbb{R}^q}+\|P^{u_t,v_s}\bar{\psi}(u_t)(x)-\bar{\psi}(u_t)(x)\|_{\Sigma_xM\otimes\mathbb{R}^q}\\
&=\|P^{u_t,v_s}\bar{\psi}(u_t)(x)-\bar{\psi}(v_s)(x)\|_{\Sigma_xM\otimes T_{(\pi\circ v_s(x))}N}+\|P^{u_t,v_s}\bar{\psi}(u_t)(x)-\bar{\psi}(u_t)(x)\|_{\Sigma_xM\otimes\mathbb{R}^q}\\
&\leq C(R,\epsilon,\psi_0)\|u_t-v_s\|_{C^0(M,\mathbb{R}^q)}+\|P^{u_t,v_s}\bar{\psi}(u_t)(x)-\bar{\psi}(u_t)(x)\|_{\Sigma_xM\otimes\mathbb{R}^q}.
\end{split}\end{equation*}
It remains to estimate the last term in the inequality above. To that end, let $\gamma(r):=\exp_{(\pi\circ u_t)(x)}(r\exp^{-1}_{(\pi\circ u_t)(x)}(\pi\circ u_t(x)))$, $r\in[0,1]$, be the unique shortest geodesic of $N$ from $(\pi\circ u_t)(x)$ to $(\pi\circ v_s)(x)$. Let $X\in T_{\gamma(0)}N$ be given and denote by $X(r)$ the unique parallel vector field along $\gamma$ with $X(0)=X$. Then we have
\begin{equation}
P^{u_t,v_s}X-X=X(1)-X(0)=\int_0^1\frac{dX}{dr}\bigg|_{r=\xi}d\xi=\int_0^1II(\gamma'(r),X(r))dr.
\end{equation}
Therefore, 
\begin{equation}
\|P^{u_t,v_s}X-X\|_{\mathbb{R}^q}\leq C_1\sup\limits_{r\in[0,1]}\|\gamma'(r)\|_{N}\sup\limits_{r\in[0,1]}\|X(r)\|_{N}=C_1\|\gamma'(0)\|_{N}\|X\|_{N}
\end{equation}
where $II$ is the second fundamental form of $N$ in $\mathbb{R}^q$ and $C_1$ only depends on $N$. Using \eqref{control distance on N} and the Lipschitz continuity of $\pi$ we get
\begin{equation}
\|\gamma'(0)\|_{N}\leq d^N((\pi\circ u_t)(x),(\pi\circ v_s)(x))\leq C_2\|u_t(x)-v_s(x)|\|_{\mathbb{R}^q}
\end{equation}
and 
\begin{equation}
\|P^{u_t,v_s}X-X\|_{\mathbb{R}^q}\leq C_3\|u_t(x)-v_s(x)|\|_{\mathbb{R}^q}\|X\|_{N}.
\end{equation}
This implies
\begin{equation}
\|P^{u_t,v_s}\bar{\psi}(u_t)(x)-\bar{\psi}(u_t)(x)\|_{\Sigma_xM\otimes\mathbb{R}^q}\leq C(R,\epsilon,\psi_0)\|u_t(x)-v_s(x)|\|_{\mathbb{R}^q}.
\end{equation}
Hence, \eqref{psi bar lip} holds.

Now, using \eqref{difference of psi bar after pt} and \eqref{psi bar lip}, we get
\begin{equation*}
\begin{split}
&\|\psi^A(u_t)(x)-\psi^A(v_s)(x)\|
=\bigg\|\frac{\bar\psi^A(u_t)(x)}{\|\bar\psi(u_t)\|_{L^2}}-\frac{\bar\psi^A(u_t)(x)}{\|\bar\psi(v_s)\|_{L^2}}+\frac{\bar\psi^A(u_t)(x)}{\|\bar\psi(v_s)\|_{L^2}}-\frac{\bar\psi^A(v_s)(x)}{\|\bar\psi(v_s)\|_{L^2}}\bigg\|\\
&\leq\frac{\bar\psi^A(u_t)(x)}{\|\bar\psi(u_t)\|_{L^2}\|\bar\psi(v_s)\|_{L^2}}\bigg|\|\bar\psi(v_s)\|_{L^2}-\|\bar\psi(u_t)\|_{L^2}\bigg|+\frac{1}{\|\bar\psi(v_s)\|_{L^2}}\|\bar\psi^A(u_t)(x)-\bar\psi^A(v_s)(x)\|\\
&=\frac{\bar\psi^A(u_t)(x)}{\|\bar\psi(u_t)\|_{L^2}\|\bar\psi(v_s)\|_{L^2}}\bigg|\|\bar\psi(v_s)\|_{L^2}-\|P^{u_t,v_s}\bar\psi(u_t)\|_{L^2}\bigg|\\
&\quad+\frac{1}{\|\bar\psi(v_s)\|_{L^2}}\|\bar\psi^A(u_t)(x)-\bar\psi^A(v_s)(x)\|\\
&\leq\frac{\bar\psi^A(u_t)(x)}{\|\bar\psi(u_t)\|_{L^2}\|\bar\psi(v_s)\|_{L^2}}\|P^{u_t,v_s}\bar\psi(u_t)-\bar\psi(v_s)\|_{L^2}+\frac{1}{\|\bar\psi(v_s)\|_{L^2}}\|\bar\psi^A(u_t)(x)-\bar\psi^A(v_s)(x)\|\\
&\leq\bigg(\frac{\bar\psi^A(u_t)(x)}{\|\bar\psi(u_t)\|_{L^2}\|\bar\psi(v_s)\|_{L^2}}+\frac{1}{\|\bar\psi(v_s)\|_{L^2}}\bigg)C(R,\epsilon,\psi_0)\|u_t-v_s\|_{C^0(M,\mathbb{R}^q)}.
\end{split}
\end{equation*}
Then the inequality \eqref{psi lip} follows from Lemma \ref{projection to kernel} and \eqref{psi bar lip}. This completes the proof.

\end{proof}


\nocite{*}


\bibliographystyle{amsplain}
\bibliography{reference} 

\providecommand{\bysame}{\leavevmode\hbox to3em{\hrulefill}\thinspace}
\providecommand{\MR}{\relax\ifhmode\unskip\space\fi MR }
\providecommand{\MRhref}[2]{%
  \href{http://www.ams.org/mathscinet-getitem?mr=#1}{#2}
}
\providecommand{\href}[2]{#2}
\begin{thebibliography}{10}

\bibitem{ammann2013dirac}
Bernd Ammann and Nicolas Ginoux, \emph{Dirac-harmonic maps from index theory},
  Calculus of Variations and Partial Differential Equations \textbf{47} (2013),
  no.~3-4, 739--762.

\bibitem{chen2005regularity}
Qun Chen, J{\"u}rgen Jost, Jiayu Li, and Guofang Wang, \emph{Regularity
  theorems and energy identities for {D}irac-harmonic maps}, Mathematische
  Zeitschrift \textbf{251} (2005), no.~1, 61--84.

\bibitem{chen2006dirac}
\bysame, \emph{Dirac-harmonic maps}, Mathematische Zeitschrift \textbf{254}
  (2006), no.~2, 409--432.

\bibitem{chen2015dirac}
Qun Chen, J{\"u}rgen Jost, Linlin Sun, and Miaomiao Zhu, \emph{Dirac-harmonic
  maps between {R}iemann surfaces}, MPI MIS Preprint: 76/2015 (2015).

\bibitem{chen2017estimates}
\bysame, \emph{Estimates for solutions of {D}irac equations and an application
  to a geometric elliptic-parabolic problem}, Journal of the European
  Mathematical Society \textbf{21} (2019), no.~3, 665--707.

\bibitem{isobe2012existence}
Takeshi Isobe, \emph{On the existence of nonlinear {D}irac-geodesics on compact
  manifolds}, Calculus of Variations and Partial Differential Equations
  \textbf{43} (2012), no.~1-2, 83--121.

\bibitem{isobe2017morse}
Takeshi Isobe and Ali Maalaoui, \emph{Morse-{F}loer theory for super-quadratic
  {D}irac-geodesics}, arXiv preprint arXiv:1712.08960 (2017).

\bibitem{jost2009geometry}
J{\"u}rgen Jost, \emph{Geometry and physics}, Springer Science \& Business
  Media, 2009.

\bibitem{jost2017riemannian}
\bysame, \emph{Riemannian geometry and geometric analysis}, Springer, 2017.

\bibitem{jost2017global}
J{\"u}rgen Jost, Lei Liu, and Miaomiao Zhu, \emph{A global weak solution of the
  {D}irac-harmonic map flow}, Annales de l'Institut Henri Poincare (C) Non
  Linear Analysis \textbf{34} (2017), no.~7, 1851--1882.

\bibitem{jost2018geometric}
\bysame, \emph{Geometric analysis of a mixed elliptic-parabolic conformally
  invariant boundary value problem}, MPI MIS Preprint: 41/2018 (2018).

\bibitem{jost2019alpha}
J{\"u}rgen Jost and Jingyong Zhu, \emph{$\alpha$-{D}irac-harmonic maps from
  closed surfaces}, arXiv preprint arXiv:1903.07927 (2019).

\bibitem{kato2013perturbation}
Tosio Kato, \emph{Perturbation theory for linear operators}, vol. 132, Springer
  Science \& Business Media, 2013.

\bibitem{lawson1989spin}
H~Blaine Lawson~Jr and Marie-Louise Michelsohn, \emph{Spin geometry, volume 38
  of {P}rinceton {M}athematical {S}eries}, {P}rinceton {U}niversity {P}ress,
  {P}rinceton, {NJ}, 1989.

\bibitem{lieberman1996second}
Gary~M Lieberman, \emph{Second order parabolic differential equations}, World
  scientific, 1996.

\bibitem{nash1956imbedding}
John Nash, \emph{The imbedding problem for {R}iemannian manifolds}, Annals of
  mathematics (1956), 20--63.

\bibitem{sacks1981existence}
Jonathan Sacks and Karen Uhlenbeck, \emph{The existence of minimal immersions
  of 2-spheres}, Ann. of Math \textbf{113} (1981), no.~1, 1--24.

\bibitem{schlag1996schauder}
Wilhem Schlag, \emph{Schauder and ${L}^p$ estimates for parabolic systems via
  {C}ampanato spaces}, Communications in Partial Differential Equations
  \textbf{21} (1996), no.~7-8, 1141--1175.

\bibitem{wittmann2017short}
Johannes Wittmann, \emph{Short time existence of the heat flow for
  {D}irac-harmonic maps on closed manifolds}, Calculus of Variations and
  Partial Differential Equations \textbf{56} (2017), no.~6, 169.

\bibitem{wittmann2019minimal}
\bysame, \emph{Minimal kernels of {D}irac operators along maps}, Mathematische
  Nachrichten (2019), 1--9.

\bibitem{yang2009structure}
Ling Yang, \emph{A structure theorem of {D}irac-harmonic maps between spheres},
  Calculus of Variations and Partial Differential Equations \textbf{35} (2009),
  no.~4, 409--420.

\end{thebibliography}
 
\end{document}